\DeclareMathOperator*{\MCH}{{\mathcal{C}}}
\DeclareMathOperator*{\MNH}{{\mathcal{N}}}
\newcommand{\NP}{\ensuremath{\mathsf{NP}}}
\def\EnableMNotes{1}
\newcommand{\mynote}[1]{{\sf #1}}
\newcommand{\mynote}[1]{}
\newtheorem{theorem}{Theorem}[section]
\newtheorem{claim2}{Claim}
\newtheorem{assumption}{Assumption}
\newtheorem{proposition}[theorem]{Proposition}
\newtheorem{corollary}[theorem]{Corollary}
\newtheorem{definition}[theorem]{Definition}
\newtheorem{lemma}[theorem]{Lemma}
\newtheorem*{diamondLemma}{Lemma~\ref{diamondLemma}}
\newtheorem{example}[theorem]{Example}
\newtheorem*{question}{Question}
\theoremstyle{remark}
\newtheorem{remark}[theorem]{Remark}
\numberwithin{subcase}{case}
\numberwithin{subsubc}{subcase}
\theoremstyle{remark}
\newtheorem*{cproof}{Proof}
\newcounter{caseCount}
\newenvironment{thmcase}{\medskip
    \noindent\refstepcounter{caseCount}\textit{Case~\arabic{caseCount}: }}{\medskip}
\newcounter{subcaseCount}[caseCount]
\newcounter{subsubcCount}[subcaseCount]
\begin{document}
\title{Linear separation of connected dominating sets in graphs
\footnote{A part of this work appeared as an extended abstract in~\cite{ISAIM2014}.}}

\author{Nina Chiarelli\\
\small University of Primorska, UP FAMNIT, Glagolja\v ska 8, SI6000 Koper, Slovenia\\
\small University of Primorska, UP IAM, Muzejski trg 2, SI6000 Koper, Slovenia\\
\small \texttt{nina.chiarelli@famnit.upr.si}\\
\and
Martin Milani\v c\\
\small University of Primorska, UP IAM, Muzejski trg 2, SI6000 Koper, Slovenia\\
\small University of Primorska, UP FAMNIT, Glagolja\v ska 8, SI6000 Koper, Slovenia\\
\small \texttt{martin.milanic@upr.si}
}

\date{\today}
\pagestyle{plain}
\maketitle
\begin{abstract}
A connected dominating set in a graph is a dominating set of vertices that induces a connected subgraph.
Following analogous studies in the literature related to independent sets, dominating sets, and total dominating sets, we study in this paper the class of graphs in which the connected dominating sets can be separated from the other vertex subsets by a linear weight function. More precisely, we say that a graph is connected-domishold if it admits non-negative real weights associated to its vertices such that a set of vertices is a connected dominating set if and only if the sum of the corresponding weights exceeds a certain threshold.
We characterize the graphs in this non-hereditary class in terms of a property of the set of minimal cutsets of the graph.
We give several characterizations for the hereditary case, that is, when each connected induced subgraph is required to be connected-domishold.
The characterization by forbidden induced subgraphs implies that the class properly generalizes two well known classes of chordal graphs, the block graphs and the trivially perfect graphs. Finally, we study certain algorithmic aspects of connected-domishold graphs. Building on connections with minimal cutsets and properties of the derived hypergraphs and Boolean functions, we show that our approach leads to new polynomially solvable cases of the weighted connected dominating set problem.

\medskip

\begin{sloppypar}
\noindent{\bf Keywords:}
connected dominating set, connected domination, connected-domishold graph, forbidden induced subgraph characterization, split graph,
chordal graph, minimal cutset, minimal separator, $1$-Sperner hypergraph, threshold hypergraph, threshold Boolean function, polynomial time algorithm
\end{sloppypar}

\medskip
\noindent{\bf Math.~Subj.~Class.}~(2010): 05C69, 05C75, 05C65, 05C85
\end{abstract}

\section{Introduction}

\subsection{Background}

Threshold concepts have been a subject of investigation for various discrete structures, including graphs~\cite{MR3281177,MR0479384,MR1417258}, Boolean functions~\cite{MR0128319,MR2742439,conf/focs/Elgot60,Gabelman,MR0439441,MR798011}, and hypergraphs~\cite{MR2063679,MR791660}. A common theme of these studies is a quest for necessary and sufficient conditions for the property that a given combinatorial structure defined over some finite ground set $U$ admits non-negative real weights associated to elements of $U$ such that a subset of $U$ satisfies a certain property, say $\pi$, if and only if the sum of the corresponding weights exceeds a certain threshold.
A more general framework has also been proposed, where the requirement is that a subset of $U$ satisfies property $\pi$ if and only if the sum of the corresponding weights belongs to a set $T$ of thresholds given by a membership oracle~\cite{MR2823204}.

\smallskip
Having the set $U$ equipped with weights as above can have useful algorithmic implications. Consider for example the optimization problem of finding a subset of $U$ with property $\pi$ that has either maximum or minimum cost (according to a given linear cost function on the elements of the ground set). It was shown in~\cite{MR2823204} that if the weights as above are known and integer, then the problem can be solved by a dynamic programming approach in time $\mathcal{O}(|U|M)$ and with $M$ calls of the membership oracle, where $M$ is a given upper bound for $T$.
The pseudo-polynomial running time should be expected, since the problem is very general and captures also the well-known knapsack problem~\cite{MR2161720}. Note, however, that the problem admits a much simpler, polynomial time solution in the special case when the costs are unit and if we assume the monotone framework, where a set satisfies property $\pi$ as soon as its total weight exceeds a certain threshold. Under these assumptions, a minimum-sized subset of $U$ satisfying property $\pi$ can be found by a simple greedy algorithm starting with the empty set and adding the elements in order of non-increasing weight until the threshold is exceeded.

\smallskip
Many interesting graph classes can be defined within the above framework, including threshold graphs~\cite{MR0479384,MR1417258,MR2359603}, domishold graphs~\cite{MR0491342}, total domishold graphs~\cite{ChiMil13,MR3281177}, equistable graphs~\cite{MR553649}, and equidominating graphs~\cite{MR553649}.
In general, the properties of the resulting graph classes depend both on the choice of property $\pi$ and on the constraints imposed on the structure of the set $T$ of thresholds. For example, if $U$ is the vertex set of a graph, property $\pi$ denotes the property of being an independent (stable) set in a graph, and $T$ is restricted to be an interval unbounded from below, we obtain the class of threshold graphs, which is a very well understood class of graphs, admitting many characterizations and linear time algorithms for recognition and various optimization problems (see, e.g.,~\cite{MR1417258}). If $\pi$ denotes the property of being a dominating set and $T$ is an interval unbounded from above, we obtain the class of domishold graphs, which enjoys similar properties as the class of threshold graphs. On the other hand, if $\pi$ is the property of being a \emph{maximal} stable set and $T$ is restricted to consist of a single number, we obtain the class of equistable graphs, for which the recognition complexity is open (see, e.g.,~\cite{DBLP:conf/wg/LevitMT12}), no structural characterization is known, and several \NP-hard optimization problems remain intractable~\cite{MR2823204}.

\smallskip
Notions and results from the theory of Boolean functions~\cite{MR2742439} and hypergraphs~\cite{MR1013569} can be useful for the study of graph classes defined within the above framework. For instance, the characterization of hereditarily total domishold graphs in terms of forbidden induced subgraphs given in~\cite{MR3281177} is based on the facts that every threshold Boolean function is $2$-asummable~\cite{MR0128319} and that every dually Sperner hypergraph is threshold~\cite{ChiMil13}.\footnote{In~\cite{ChiMil13,MR3281177}, the hereditarily total domishold graphs were named hereditary total domishold graphs. We prefer to adopt the grammatically more correct term ``hereditarily total domishold''.} Moreover, the fact that threshold Boolean functions are closed under dualization and (when given by their complete DNF) can be recognized in polynomial time~\cite{MR798011} leads to efficient algorithms for recognizing total domishold graphs and for finding a minimum total dominating set in a given total domishold graph~\cite{ChiMil13}. The relationship also goes the other way around. For instance, total domishold graphs can be used to characterize threshold hypergraphs and threshold Boolean functions~\cite{MR3281177}.

\subsection{Aims and motivation}

The aim of this paper is to further explore and exploit this fruitful interplay between threshold concepts in graphs, hypergraphs, and Boolean functions. We do this by studying the class of \emph{connected-domishold} graphs, a new class of graphs that can be defined in the above framework, as follows. A \emph{connected dominating set} (\emph{CD set} for short) in a connected graph $G$ is a set $S$ of vertices of $G$ that is \emph{dominating}, that is, every vertex of $G$ is either in $S$ or has a neighbor in $S$, and \emph{connected}, that is, the subgraph of $G$ induced by $S$ is connected.
The ground set $U$ is the vertex set of a connected graph $G = (V,E)$, property $\pi$ is the property of being a connected dominating set in $G$, and $T$ is any interval unbounded from above.

\medskip
Our motivations for studying the notion of connected domination in the above threshold framework are twofold.
First, connected domination is one of the most basic of the many variants of domination, with applications in modeling wireless networks, see, e.g.,
~\cite{MR2986095,HHS2-98,MR1605684,MR2114596,MR2782462,MR2965384,MR2510358,MR2434962,MR2901109,MR2901149,MR2979503,Wu2001}.
The connected dominating set problem is the problem of finding a minimum connected dominating set in a given connected graph. This problem is \NP-hard (and hard to approximate) for general graphs and remains intractable even under significant restrictions, for instance, for the class of split graphs
(see Section~\ref{sec:WCDS}). On the other hand, as outlined above, the problem is polynomially solvable in the class of connected-domishold graphs equipped with weights as in the definition. This motivates the study of connected-domishold graphs. In particular, identification of subclasses of connected-domishold graphs might lead to new classes of graphs where the connected dominating set problem (or its weighted version) is polynomially solvable.

\smallskip
\begin{sloppypar}
Second, despite the growingly large variety of graph domination concepts studied in the literature (see, e.g.,~\cite{HHS2-98,MR1605684}), so far a relatively small number of ``threshold-like'' graph classes was studied with respect to notions of domination: the classes of domishold and equidominating graphs (corresponding to the usual domination), the class of equistable graphs (corresponding to independent domination), and the class of total domishold graphs (corresponding to total domination). These graph classes differ significantly with respect to their structural and algorithmic properties.
For instance, while the class of domishold graphs is a highly structured hereditary subclass of cographs, the classes of equistable and of total domishold graphs are not contained in any nontrivial hereditary class of graphs and are not structurally understood.\footnote{A class of graphs is said to be \emph{hereditary} if it is closed under vertex deletion.}
In particular, the class of total domishold graphs is as rich in its combinatorial structure as the class of threshold hypergraphs~\cite{MR3281177}, for which (despite being recognizable in polynomial time via linear programming~\cite{MR798011,MR2742439}) the existence of a ``purely combinatorial'' polynomial time recognition algorithm is an open problem~\cite{MR2742439}. These results, differences, and challenges provide further motivation for the study of structural and algorithmic properties of connected-domishold graphs.
\end{sloppypar}

\subsection{The definition}

\begin{definition}\label{def:main}
A graph $G=(V,E)$ is said to be \emph{connected-domishold} (CD for short) if there exists a pair $(w,t)$ where $w:V\to \mathbb{R}_+$ is a weight function and $t\in \mathbb{R}_+$ is a threshold such that for every subset $S\subseteq V$, $w(S):= \sum_{x\in S}w(x)\ge t$ if and only if $S$ is a connected dominating set in $G$. Such a pair $(w,t)$ will be referred to as a {\em connected-domishold (CD) structure} of $G$.\end{definition}

Note that if $G$ is disconnected, then $G$ does not have any connected dominating sets. However, for technical reasons, we consider disconnected graphs to be connected-domishold. (This convention is compatible with Definition~\ref{def:main}: setting $w(x) = 0$ for all $x\in V(G)$ and $t = 1$ yields a CD structure of a disconnected graph $G$.)
Note also that not every graph that is connected and domishold is connected-domishold: the $4$-vertex cycle is connected and domishold~(see, e.g.,~\cite{MR0491342}) but not CD.

\begin{example}\label{exa:cycle}
The $4$-cycle $C_4$ is not connected-domishold. Denoting its vertices by $v_1,v_2,v_3,v_4$ in a cyclic order, a subset $S\subseteq V(C_4)$ is CD if and only if it contains an edge. Therefore, if $(w,t)$ is a CD structure of $C_4$, then $w(v_i)+w(v_{i+1})\ge t$ for all $i\in \{1,2,3,4\}$ (indices modulo $4$), which implies $w(V(C_4))\ge 2t$. On the other hand, $w(v_1)+w(v_3)<t$ and $w(v_2)+w(v_4)<t$, implying $w(V(C_4))<2t$.
\end{example}

\begin{example}\label{exa:CompleteGraph}
The complete graph of order $n$ is connected-domishold. Indeed, any nonempty subset $S\subseteq V(K_n)$ is a connected dominating set of $K_n$,
and the pair $(w,1)$ where $w(x) = 1$ for all $x\in V(K_n)$ is a CD structure of $K_n$.
\end{example}

\subsection{Overview of results}

Our results can be divided into four interrelated parts and can be summarized as follows:

\medskip
\begin{enumerate}[1)]
  \item \begin{sloppypar}{\bf Characterizations in terms of derived hypergraphs (resp., derived Boolean functions);
  a necessary and a sufficient condition.}
\end{sloppypar}

\smallskip
  In a previous work~\cite[Proposition 4.1 and Theorem 4.5]{MR3281177}, total domishold graphs were characterized in terms of thresholdness of a derived hypergraph and a derived Boolean function. We give similar characterizations of connected-domishold graphs (Proposition~\ref{prop:c-dom-graphs}). The characterizations lead to a necessary and a sufficient condition for a graph to be connected-domishold, respectively, expressed in terms of properties of the derived hypergraph (equivalently: of the derived Boolean function; Corollary~\ref{prop:c-dom-graphs-ms-2-asummable}).

\medskip
  \item {\bf The case of split graphs. A characterization of threshold hypergraphs.}

  While the classes of connected-domishold and total domishold graphs are in general incomparable, we show that they coincide within the class of connected split graphs (Theorem~\ref{thm:split}). Building on this equivalence, we characterize threshold hypergraphs in terms of the connected-domisholdness property of a derived split graph (Theorem~\ref{thm:hyp-split}). We also give examples of connected split graphs showing that neither of the two conditions for a graph to be connected-domishold mentioned above (one necessary and one sufficient) characterizes this property.

\medskip
  \item {\bf The hereditary case.}

\begin{sloppypar}
  We observe that, contrary to the classes of threshold and domishold graphs, the class of connected-domishold graphs is not hereditary. This motivates the study of so-called \emph{hereditarily connected-domishold graphs}, defined as graphs every induced subgraph of which is connected-domishold. As our main result (Theorem~\ref{thm:characterizations}), we give several characterizations of the class of hereditarily connected-domishold graphs.
  The characterizations in terms of forbidden induced subgraphs implies that the class of hereditarily connected-domishold graphs is a subclass of the class of chordal graphs properly containing two well known classes of chordal graphs, the class of block graphs and the class of trivially perfect graphs.
\end{sloppypar}

\medskip
  \item {\bf Algorithmic aspects via vertex separators.}

 Finally, we build on all these results, together with some known results from the literature on connected dominating sets and minimal vertex separators in graphs, to study certain algorithmic aspects of the class of connected-domishold graphs and their hereditary variant. We identify a sufficient condition, capturing a large number of known graph classes, under which the CD property can be recognized efficiently (Theorem~\ref{thm:poly-cutsets}). We also show that the same condition, when applied to classes of connected-domishold graphs, results in classes of graphs for which the weighted connected dominating set  problem (which is \NP-hard even on split graphs) is polynomially solvable (Theorem~\ref{thm:poly}).
 This includes the classes of hereditarily connected-domishold graphs and $F_2$-free split graphs (see Fig.~\ref{fig:F2}), leading thus to new polynomially solvable cases of the problem.
\begin{figure}[!ht]
  \begin{center}
\includegraphics[width=0.2\linewidth]{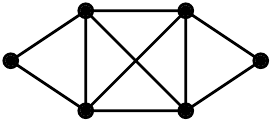}
  \end{center}
    \caption{Graph $F_2$.}
\label{fig:F2}
\end{figure}
\end{enumerate}

\medskip
\begin{sloppypar}
\noindent{\bf Structure of the paper.} In Section~\ref{sec:prelim}, we state the necessary definitions and preliminary results on graphs, hypergraphs, and Boolean functions. In Section~\ref{sec:hypergraphs}, we give characterizations of connected-domishold graphs in terms of thresholdness of derived hypergraphs and Boolean functions. Connected-domishold split graphs are studied in Section~\ref{sec:split}, where their relation to threshold hypergraphs is also observed. The main result of the paper, Theorem~\ref{thm:characterizations}, is stated in Section~\ref{sec:main}, where some of its consequences are also derived. Section~\ref{sec:algo} discusses some algorithmic aspects of connected-domishold graphs.
Our proof of Theorem~\ref{thm:characterizations} relies on a technical lemma, which is proved in Section~\ref{sec:proof}.
\end{sloppypar}

\section{Preliminaries}\label{sec:prelim}

\subsection{Graphs}

All graphs in this paper will be finite, simple and undirected. The \emph{(open) neighborhood} of a vertex $v$ is the set of vertices in a graph $G$ adjacent to $v$, denoted by $N_G(v)$ (or simply $N(v)$ if the graph is clear from the context); the \emph{closed neighborhood} of $v$ is denoted by $N_G[v]$ and defined as $N_G(v)\cup\{v\}$. The \emph{degree} of a vertex $v$ in a graph $G$ is the cardinality of its neighborhood. The complete graph, the path and the cycle of order $n$ are denoted by $K_n$, $P_n$ and $C_n$, respectively. A \emph{clique} in a graph is a subset of pairwise adjacent vertices, and an \emph{independent} (or \emph{stable}) set is a subset of pairwise non-adjacent vertices.
A \emph{universal vertex} in a graph $G$ is a vertex adjacent to all other vertices. For a set $S$ of vertices in a graph $G$, we denote by $G[S]$ the subgraph of $G$ induced by $S$. For a set $\mathcal{F}$ of graphs, we say that a graph is $\mathcal{F}$-free if it does not contain any induced subgraph isomorphic to a member of $\mathcal{F}$. Given a graph $G$, a vertex $v\in V(G)$, and a set $U\subseteq V(G)\setminus\{v\}$, we say that $v$ {\em dominates} $U$ if $v$ is adjacent to every vertex in $U$.

The main notion that will provide the link between threshold Boolean functions and hypergraphs is that of cutsets in graphs.
A \emph{cutset} in a graph $G$ is a set $S\subseteq V(G)$ such that $G-S$ is disconnected. A cutset is \emph{minimal} if it does not contain any other cutset. For a pair of disjoint vertex sets $A$ and $B$ in a graph $G$ such that no vertex in $A$ has a neighbor in $B$, an \emph{$A,B$-separator} is a set of vertices $S\subseteq V(G)\setminus (A\cup B)$ such that $A$ and $B$ are in different components of $G-S$. An $A,B$-separator is said to be \emph{minimal} if it does not contain any other $A,B$-separator.
When sets $A$ and $B$ are singletons, say $A = \{u\}$ and $B = \{v\}$, we will refer to a (minimal) $A,B$-separator simply as a \emph{(minimal) \hbox{$u,v$-separator}}. A \emph{minimal vertex separator} in $G$ is a minimal $u,v$-separator for some non-adjacent vertex pair $u$, $v$. Note that every minimal cutset of $G$ is a minimal vertex separator, but not vice versa. The minimal cutsets are exactly the minimal $u,v$-separators that do not contain any other $x,y$-separator. The connection between the CD graphs and the derived hypergraphs and Boolean functions will be developed in Section~\ref{sec:hypergraphs} using the following characterization of CD sets due to Kant\'e et al.~\cite{KLMN}.

\begin{proposition}[Kant\'e et al.~\cite{KLMN}]\label{prop:c-dom-ms}
In every graph $G$ that is not complete, a subset $D\subseteq V(G)$ is a CD set if and only if $D\cap S\neq \emptyset$ for every minimal cutset $S$ in $G$.
\end{proposition}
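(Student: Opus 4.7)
The plan is to base both implications on the following simple observation: if $T \subseteq V(G)$ is any cutset of $G$, then $T$ contains at least one minimal cutset of $G$. Indeed, the family of cutsets of $G$ that are subsets of $T$ is non-empty (it contains $T$) and hence admits an inclusion-minimal element $S^\ast$; no proper subset of $S^\ast$ can be a cutset, else it would also lie in $T$ and contradict minimality, so $S^\ast$ is itself a minimal cutset. One consequence is that if $D$ meets every minimal cutset of $G$, then $D$ meets every cutset of $G$.

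For the forward direction, suppose $D$ is a CD set and, for contradiction, that some minimal cutset $S$ satisfies $D\cap S=\emptyset$. Then $G[D]$ is a connected subgraph of $G-S$ and must therefore lie within a single component $C_1$ of $G-S$. Picking any vertex $w$ of another component $C_2$ (which exists since $S$ is a cutset), all neighbors of $w$ in $G$ lie in $C_2\cup S$, so $w\notin D$ and $w$ has no neighbor in $D$, contradicting domination. Notice that this argument uses only that $S$ is a cutset; minimality of $S$ plays no role here.

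For the reverse direction, assume $D$ meets every minimal cutset of $G$ and hence, by the opening observation, every cutset of $G$. We may assume $G$ is connected, since otherwise the empty set is itself a minimal cutset and the hypothesis fails for every $D$. To prove domination, take $v\in V(G)\setminus D$ and suppose $v$ has no neighbor in $D$. If $D=\emptyset$, pick a non-adjacent pair $x,y$ (available since $G$ is not complete); then $V(G)\setminus\{x,y\}$ is a cutset not met by $\emptyset$, a contradiction. Otherwise $v$ cannot be universal (a universal vertex is adjacent to every vertex of any non-empty $D$), so $V(G)\setminus N[v]$ is non-empty and $N(v)$ is a cutset of $G$ disjoint from $D$, contradicting the assumption. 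To prove connectedness, suppose $G[D]$ is disconnected, let $D_1$ be one of its components, and set $D_2=D\setminus D_1$. Since $D_1$ and $D_2$ lie in different components of $G[D]$, no edge of $G$ joins them, so $V(G)\setminus D$ is a cutset of $G$ disjoint from $D$, again a contradiction. The only mildly delicate step is the passage from an arbitrary cutset to a minimal cutset inside a prescribed subset, which is precisely what the opening observation enables.
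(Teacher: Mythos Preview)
The paper does not supply its own proof of this proposition; it is quoted as a result of Kant\'e et al.\ and used as a black box. Your argument is correct and complete: the opening observation (every cutset contains a minimal cutset) is the right bridge, the forward direction is the standard separation argument, and in the reverse direction you correctly handle the edge cases ($D=\emptyset$, $v$ universal, $D=V(G)$) before invoking that $N(v)$, respectively $V(G)\setminus D$, is a cutset disjoint from $D$. One cosmetic point: in the connectedness step, $D_2$ need not be a single component of $G[D]$, so the phrase ``$D_1$ and $D_2$ lie in different components'' is slightly loose; but all you actually use is that $G[D]$ is disconnected, which makes $V(G)\setminus D$ a cutset, and that is sound.
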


In other words, the CD sets of a graph $G$ are exactly the transversals of the hypergraph of the minimal cutsets of $G$
(see Section~\ref{sec:hypergraphs-prelim} and Definition~\ref{def:MCH} for definitions of these notions).

A graph $G$ is \emph{chordal} if it does not contain any induced cycle of order at least $4$, and \emph{split} if it has a \emph{split partition}, that is, a partition of its vertex set into a clique and an independent set. One of our proofs (the proof of Theorem~\ref{thm:characterizations}) will rely on the following property of chordal graphs.

\begin{lemma}[Kumar and Veni Madhavan~\cite{KumarM98}]\label{lem:vertices-dominating-cutsets-in-chordal-graphs}
If $S$ is a minimal cutset of a chordal graph $G$, then each connected component of $G - S$ has a vertex that is adjacent to all the vertices of $S$.
\end{lemma}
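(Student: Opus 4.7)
The plan is to deduce the lemma from a slightly more general auxiliary statement that is convenient to prove by induction: \emph{if $H$ is a chordal graph and $S\subseteq V(H)$ is a clique such that $H-S$ is connected and every $s\in S$ has a neighbor in $V(H)\setminus S$, then some vertex of $V(H)\setminus S$ is adjacent to every vertex of $S$.} To reduce the lemma to this, let $C$ be a component of $G-S$ and set $H:=G[C\cup S]$, which is chordal. Since $S$ is a minimal cutset of $G$, it is in fact a minimal $u,v$-separator for any $u\in C$ and any $v$ in a different component of $G-S$ (a proper subset of $S$ that still separated $u$ from $v$ would disconnect $G$, contradicting the minimality of $S$ as a cutset), so Dirac's theorem that minimal vertex separators in chordal graphs are cliques gives that $S$ is a clique. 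The remaining hypotheses of the auxiliary statement ($H-S=G[C]$ connected; each $s\in S$ has a neighbor in $C$) are immediate.

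I would prove the auxiliary statement by induction on $|V(H)\setminus S|$. The base case $|V(H)\setminus S|=1$ is trivial: the unique outside vertex must be adjacent to every $s\in S$. For the induction step, pick a simplicial vertex $v$ of $H$, which exists by chordality. If $v\in S$, then $v$ has some neighbor $c\in V(H)\setminus S$ by hypothesis, and because the clique $N_H(v)$ contains both $c$ and $S\setminus\{v\}$, the vertex $c$ is adjacent to every vertex of $S$, and we are done. If $v\notin S$ and $S\subseteq N_H(v)$ we are already done with $v$, so the interesting case is $v\notin S$ with some $s^*\in S\setminus N_H(v)$; here I would apply the induction hypothesis to $H-v$.

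The main technical point, which I expect to be the substantive step, is verifying that $H-v$ still satisfies all the hypotheses. Chordality and $S$ remaining a clique are immediate. Connectivity of $(H-v)-S$ uses simpliciality of $v$: any path in $H-S$ through $v$ has the form $\dots,p,v,q,\dots$ with $p,q\in N_H(v)\cap(V(H)\setminus S)$, and this last set is a clique, so $p,v,q$ can be replaced by the edge $pq$. The delicate hypothesis to preserve is that each $s\in S$ has a neighbor in $V(H)\setminus(S\cup\{v\})$: for $s\notin N_H(v)$ this is automatic, and for $s\in S\cap N_H(v)$ I would use that $v$ itself must have some neighbor $u\in V(H)\setminus S$ (the set $V(H)\setminus S$ has at least two vertices and is connected), and then $s,u\in N_H(v)$ together with $N_H(v)$ being a clique forces $su\in E(H)$, providing the required outside neighbor of $s$. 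The induction then produces $w\in V(H)\setminus(S\cup\{v\})$ with $S\subseteq N_H(w)$, completing the argument. This last preservation step --- showing that deleting a simplicial vertex does not destroy the ``every element of $S$ has an outside neighbor'' hypothesis --- is where simpliciality (and not just chordality) is really used and is the crux of the proof.
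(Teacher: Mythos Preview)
Your proof is correct. The auxiliary statement is well chosen, the reduction is sound (minimality of the cutset $S$ guarantees each $s\in S$ has a neighbor in every component $C$, and Dirac's theorem gives that $S$ is a clique), and the induction goes through. The only point worth a remark is the connectivity of $(H-v)-S$: you implicitly use that $v$ is an \emph{internal} vertex of any rerouted path, which is fine since the endpoints being connected lie in $V(H)\setminus(S\cup\{v\})$.

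As for comparison with the paper: there is nothing to compare against. The paper does not prove Lemma~\ref{lem:vertices-dominating-cutsets-in-chordal-graphs}; it simply cites it from Kumar and Veni~Madhavan~\cite{KumarM98} and uses it as a black box in the proof of Theorem~\ref{thm:characterizations}. Your argument therefore supplies a self-contained proof where the paper relies on an external reference. The approach you take (induction via deletion of a simplicial vertex) is the natural one and is essentially the argument one would expect in the cited source.
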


For graph theoretic notions not defined above, see, e.g.,~\cite{opac-b1096791}.

\subsection{Boolean functions}
Let $n$ be positive integer. Given two vectors $x,y\in \{0,1\}^n$, we write $x\le y$ if $x_i\le y_i$ for all $i\in [n] := \{1,\ldots, n\}$.
A Boolean function $f:\{0,1\}^n\to\{0,1\}$ is \emph{positive} (or: \emph{monotone}) if $f(x)\le f(y)$ holds for
every two vectors $x,y\in \{0,1\}^n$ such that $x\le y$. A \emph{literal} of $f$ is either a variable, $x_i$, or the negation of a variable, denoted by $\overline{x_i}$. An \emph{implicant} of a Boolean function $f$ is a conjunction $C$ of literals such that $f(x) = 1$ for all $x\in \{0,1\}^n$ for which $C$ takes value $1$ (we also say that $C$ \emph{implies} $f$). An implicant is said to be \emph{prime} if it is not implied by any other implicant. If $f$ is positive, then none of the variables appearing in any of its prime implicants appears negated. Every $n$-variable positive Boolean function $f$ can be expressed with its \emph{complete DNF (disjunctive normal form)}, defined as the disjunction of all prime implicants of $f$.

A positive Boolean function $f$ is said to be \emph{threshold} if there exist non-negative real weights $w=(w_1,\ldots, w_n)$ and a non-negative real number $t$ such that for every $x\in \{0,1\}^n$, $f(x) = 0$ if and only if $\sum_{i = 1}^nw_ix_i\le t$. Such a pair $(w,t)$ is called a \emph{separating structure} of $f$. Every threshold Boolean function admits an integral separating structure (see~\cite[Theorem 9.5]{MR2742439}). A positive Boolean function $f(x_1,\ldots, x_n)$ is threshold if and only if its \emph{dual function} $f^d(x) = \overline{f(\overline x)}$ is threshold~\cite{MR2742439}; moreover, if $(w_1,\ldots, w_n,t)$ is an integral separating structure of $f$, then $(w_1,\ldots, w_n,\sum_{i = 1}^nw_i-t-1)$ is a separating structure of $f^d$.

Threshold Boolean functions have been characterized in~\cite{MR0128319} and~\cite{conf/focs/Elgot60}, as follows. A \emph{false point} of $f$ is an input vector $x\in \{0,1\}^n$ such that $f(x) = 0$; a \emph{true point} is defined analogously. For $k\ge 2$, a positive Boolean function $f: \{0,1\}^n\to\{0,1\}$ is said to be \emph{$k$-summable} if, for some $r \in \{2,\ldots,k\}$,  there exist $r$ (not necessarily distinct) false points of $f$, say, $x^1, x^2,\ldots, x^r$, and $r$ (not necessarily distinct) true points of $f$, say $y^1, y^2,\ldots, y^r$, such that
$\sum_{i=1}^r x^i = \sum_{i=1}^r y^i$ (note that the sums are in $\mathbb{Z}^n$ and not in $\mathbb{Z}_2^n$, the $n$-dimensional vector space over GF($2$)). Function $f$ is said to be \emph{$k$-asummable} if it is not $k$-summable, and it is \emph{asummable} if it is $k$-asummable for all $k \geq 2$.

\begin{theorem}[Chow~\cite{MR0128319}, Elgot~\cite{conf/focs/Elgot60}, see also~{\cite[Theorem 9.14]{MR2742439}}]\label{thm:BF:characterization}
A positive Boolean function $f$ is threshold if and only if it is asummable.
\end{theorem}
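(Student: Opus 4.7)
The plan is to prove the two implications separately: the easy direction (threshold implies asummable) by a direct computation, and the hard direction (asummable implies threshold) via linear programming duality.

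For the forward direction, suppose $f$ is threshold with separating structure $(w,t)$, and assume for contradiction that $f$ is $r$-summable, witnessed by false points $x^1,\ldots,x^r$ and true points $y^1,\ldots,y^r$ with $\sum_{i=1}^r x^i = \sum_{i=1}^r y^i$. Taking the inner product of both sides with $w$ yields $\sum_{i=1}^r w\cdot x^i = \sum_{i=1}^r w\cdot y^i$. Since each $w\cdot x^i \le t$ and each $w\cdot y^i > t$, the left-hand sum is at most $rt$ while the right-hand sum is strictly greater than $rt$, a contradiction.

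For the backward direction, I would argue by contrapositive. Let $T$ and $F$ denote the sets of true and false points of $f$. The existence of a separating structure corresponds to the feasibility of the linear system
\[
w\cdot y - t \ge 1 \quad (y\in T), \qquad t - w\cdot x \ge 0 \quad (x\in F)
\]
in variables $(w,t)\in\mathbb{R}^{n+1}$. If this system is infeasible, Farkas' Lemma produces non-negative rational multipliers $\alpha_y$ (for $y\in T$) and $\beta_x$ (for $x\in F$) such that
\[
\sum_{y\in T}\alpha_y\, y \;=\; \sum_{x\in F}\beta_x\, x, \qquad \sum_{y\in T}\alpha_y \;=\; \sum_{x\in F}\beta_x, \qquad \sum_{y\in T}\alpha_y > 0.
\]
Clearing denominators yields integer multiplicities, giving multisets of true and false points of equal total cardinality $r$ whose coordinate-wise sums coincide. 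Since a single false point cannot equal a single true point, we have $r\ge 2$, and so $f$ is $r$-summable, contradicting asummability.

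The main obstacle is the backward direction, specifically setting up the right LP and extracting an \emph{integral} summability witness of multiplicity at least two. A secondary technical point is that Farkas' Lemma yields a separating hyperplane $(w,t)\in\mathbb{R}^{n+1}$ with no sign constraints, whereas the definition of a separating structure requires $w\ge 0$ and $t\ge 0$. This gap is bridged by using the positivity of $f$: a standard argument shows that for monotone Boolean functions any threshold separation can be adjusted to one with non-negative weights and non-negative threshold (in particular, $f(0,\ldots,0)=0$ forces $t\ge 0$), so the two notions coincide in our setting.
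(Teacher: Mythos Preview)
The paper does not give its own proof of this theorem; it is stated as a classical result with citations to Chow, Elgot, and Crama--Hammer, and is used as a black box. So there is no proof in the paper to compare against.

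Your argument is essentially the standard one (the Farkas/LP-duality proof, as in Crama--Hammer), and the overall structure is correct. Two minor points worth tightening. First, your appeal to ``$f(0,\ldots,0)=0$ forces $t\ge 0$'' tacitly excludes the constant function $f\equiv 1$; this case is trivial (no false points, so the system is feasible and asummability is vacuous), but you should say so. Second, the passage from a real separating structure to one with $w\ge 0$ is indeed standard for positive $f$, but the cleanest justification is not a post-hoc adjustment: for each essential variable $x_i$ there exists a point where flipping $x_i$ from $0$ to $1$ flips $f$ from $0$ to $1$, which forces $w_i>0$ directly; inessential variables can be given weight $0$. Stating this explicitly would close the gap you flag at the end.
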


The problem of determining whether a positive Boolean function given by its complete DNF is threshold is solvable in polynomial time, using dualization and linear programming (see~\cite{MR798011} and~\cite[Theorem 9.16]{MR2742439}). The algorithm tests if a polynomially sized derived linear program has a feasible solution, and in case of a yes instance, the solution found yields a separating structure of the given function. Using, e.g., Karmarkar's interior point method for linear programming~\cite{MR779900}, one can assure that a rational solution is found. This results in a rational separating structure, which can be easily turned into an integral one. We summarize this result as follows.

\begin{theorem}\label{thm:thr-Bf}
There exists a polynomial time algorithm for recognizing threshold Boolean functions given by the complete DNF.
In case of a yes instance, the algorithm also computes an integral separating structure of the given function.
\end{theorem}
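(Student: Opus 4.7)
The plan is to reformulate thresholdness as a polynomial-time solvable linear programming feasibility question in the $n+1$ variables $(w_1,\ldots,w_n,t)\in \mathbb{R}_+^{n+1}$, invoke a polynomial-time LP algorithm, and then integralize the output. From the complete DNF, I read off all prime implicants: their characteristic vectors are exactly the minimal true points of $f$. For each prime implicant $\bigwedge_{i\in I_j} x_i$ I would impose the constraint $\sum_{i\in I_j} w_i \geq t+1$. To ensure that no false point exceeds threshold, it suffices to impose $\sum_{i:\,x_i=1} w_i \leq t$ for every maximal false point $x$. Together with $w\geq 0$ and $t\geq 0$, feasibility of this system is equivalent to $f$ being threshold.

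The main obstacle is that the maximal false points are not given explicitly and could in principle be exponentially many in $n$, so writing the LP naively would not give a polynomial-size instance. I would circumvent this via the standard dualization-based approach of Peled and Simeone (see~\cite{MR798011} and~\cite[Theorem~9.16]{MR2742439}): either produce the dual complete DNF (from which the maximal false points are read off) and show that the resulting system has polynomial size, or alternatively equip the LP with an efficient separation oracle that, given a candidate $(w,t)$, either certifies all constraints hold or returns a violated one in polynomial time. Either route feeds into Karmarkar's interior-point method~\cite{MR779900}, which either certifies infeasibility (in which case, by Theorem~\ref{thm:BF:characterization} together with the equivalence between thresholdness and separability, $f$ is not threshold) or returns a rational feasible $(w,t)$.

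Finally, given a rational feasible solution, multiplying $(w,t)$ coordinatewise by the least common multiple of the denominators of its entries produces an integral vector that still satisfies $w^\top y \geq t+1$ for every minimal true point $y$ and $w^\top x \leq t$ for every maximal false point $x$; this is exactly an integral separating structure of $f$, consistent with~\cite[Theorem~9.5]{MR2742439}. The only nontrivial step is the second one: taming the potentially exponential number of maximal false point constraints so that the LP admits a polynomial-time algorithm. This is precisely what the cited dualization machinery delivers.
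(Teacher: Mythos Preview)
Your proposal is correct and follows essentially the same route the paper sketches: the paper does not prove Theorem~\ref{thm:thr-Bf} from scratch but summarizes the known argument (dualization as in~\cite{MR798011} and~\cite[Theorem~9.16]{MR2742439} to obtain a polynomially sized LP, solving it with Karmarkar's method~\cite{MR779900} to get a rational separating structure, then clearing denominators). Your write-up in fact supplies more detail than the paper's one-paragraph summary; the only minor remark is that the separation-oracle alternative you mention would pair naturally with the ellipsoid method rather than Karmarkar, but since the cited dualization result already yields a polynomial-size explicit LP, this side route is unnecessary.
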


\begin{remark}\label{rem:PCH}
The existence of a ``purely combinatorial'' polynomial time recognition algorithm for threshold Boolean functions (that is, one not relying on solving an auxiliary linear program) is an open problem~\cite{MR2742439}.
\end{remark}

\begin{sloppypar}
A similar approach as the one outlined above shows that every connected-domishold graph has an integral CD structure; we will often use this fact in the paper. For further background on Boolean functions, we refer to the comprehensive monograph by Crama and Hammer~\cite{MR2742439}.
\end{sloppypar}

\subsection{Hypergraphs}\label{sec:hypergraphs-prelim}

A \emph{hypergraph} is a pair $\mathcal{H} = (V,E)$ where $V$ is a finite set of {\em vertices} and $E$ is a set of subsets of $V$, called {\em hyperedges}~\cite{MR1013569}. When the vertex set or the hyperedge set of $\cal H$ will not be explicitly given, we will refer to them by
$V(\mathcal{H})$ and $E(\mathcal{H})$, respectively. A \emph{transversal} (or: \emph{hitting set}) of $\mathcal{H}$ is a set $S\subseteq V$ such that \hbox{$S\cap e\neq \emptyset$} for all $e\in E$. A hypergraph $\mathcal{H} = (V,E)$ is {\em threshold} if there exist a weight function $w:V\to \mathbb{R}_+$ and a threshold $t\in \mathbb{R}_+$ such that for all subsets $X\subseteq V$, it holds that $w(X)\le t$ if and only if $X$ contains no hyperedge of $\cal H$~\cite{MR2063679}. Such a pair $(w,t)$ is said to be a \emph{separating structure} of $\mathcal{H}$.

\begin{sloppypar}
To every hypergraph $\mathcal{H} = (V,E)$, we can naturally associate a positive Boolean function \hbox{$f_\mathcal{H}:\{0,1\}^{V}\to\{0,1\}$,} defined by the positive DNF expression
$$f_\mathcal{H}(x) = \bigvee_{e\in E} \bigwedge_{u\in e}x_u$$
for all $x\in \{0,1\}^V$.
Conversely, to every positive Boolean function $f:\{0,1\}^n\to \{0,1\}$ given by a positive  DNF $\phi = \bigvee_{j = 1}^m \bigwedge_{i\in C_j}x_i$, we can associate a hypergraph $\mathcal{H}(\phi) = (V,E)$ as follows: ${V} = [n]$ and ${E} = \{C_1,\ldots, C_m\}$.
It follows directly from the definitions that the thresholdness of hypergraphs and of Boolean functions are related as follows.
\end{sloppypar}

\begin{proposition}\label{prop:relation}
A hypergraph $\mathcal{H} = (V,E)$ is threshold if and only if the positive Boolean function $f_\mathcal{H}$ is threshold.
A positive Boolean function given by a positive DNF $\phi = \bigvee_{j = 1}^m \bigwedge_{i\in C_j}x_i$ is threshold if and only if
the hypergraph $\mathcal{H}(\phi)$ is threshold.
\end{proposition}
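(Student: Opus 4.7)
The plan is to prove both statements by directly unfolding the definitions, exploiting the natural identification between subsets $X \subseteq V$ and their characteristic vectors $\chi^X \in \{0,1\}^V$. Under this identification, the separating-structure condition for the hypergraph and the one for the Boolean function coincide literally.

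For the first statement, I would start with the observation that, for any $X \subseteq V$ with characteristic vector $x = \chi^X \in \{0,1\}^V$, the defining DNF
$$f_\mathcal{H}(x) = \bigvee_{e\in E} \bigwedge_{u\in e}x_u$$
evaluates to $1$ precisely when there exists $e \in E$ with $x_u = 1$ for every $u \in e$, which happens if and only if $X$ contains some hyperedge of $\mathcal{H}$. Second, for any weight function $w: V \to \mathbb{R}_+$, one trivially has $w(X) = \sum_{u \in V} w(u) x_u$. Combining these two facts, the condition ``$w(X) \le t$ iff $X$ contains no hyperedge of $\mathcal{H}$, for every $X \subseteq V$'' is \emph{the same} statement as ``$\sum_{u} w(u)\, x_u \le t$ iff $f_\mathcal{H}(x) = 0$, for every $x \in \{0,1\}^V$''. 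Hence a pair $(w,t)$ is a separating structure of $\mathcal{H}$ if and only if it is a separating structure of $f_\mathcal{H}$, and in particular $\mathcal{H}$ is threshold if and only if $f_\mathcal{H}$ is threshold.

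For the second statement, I would reduce it to the first by noting that if $\phi = \bigvee_{j = 1}^m \bigwedge_{i\in C_j}x_i$ is a positive DNF expression of a positive Boolean function $f$, then the hypergraph $\mathcal{H}(\phi) = ([n], \{C_1,\ldots, C_m\})$ satisfies $f_{\mathcal{H}(\phi)} = f$ by the very definition of $f_{\mathcal{H}(\phi)}$. Applying the first statement to $\mathcal{H}(\phi)$ then yields that $f = f_{\mathcal{H}(\phi)}$ is threshold if and only if $\mathcal{H}(\phi)$ is threshold.

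There is no genuine obstacle; the only point requiring a little care is to make sure the direction of the inequality in both threshold definitions is consistent (``below the threshold'' corresponds to ``no hyperedge contained'' on the one hand, and to ``false point'' on the other), which is indeed the case as written in the excerpt. So the whole proof reduces to writing out these two short paragraphs.
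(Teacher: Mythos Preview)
Your proposal is correct and matches the paper's approach: the paper does not give a detailed proof but simply states that the proposition ``follows directly from the definitions,'' which is precisely what your argument unfolds via the identification of subsets with their characteristic vectors.
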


Applying Theorem~\ref{thm:BF:characterization} to the language of hypergraphs gives the following characterization of threshold hypergraphs.
For $k\geq 2$, a hypergraph $\mathcal{H}=(V,E)$ is said to be \emph{$k$-summable} if, for some $r \in \{2,\ldots,k\}$, there exist $r$ (not necessarily distinct) subsets $A_1,\dots,A_r$ of $V$ such that each $A_i$ contains a hyperedge of $\mathcal{H}$, and $r$ (not necessarily distinct) subsets $B_1,\dots,B_r$ of $V$ such that each $B_i$ does not contain a hyperedge of $\mathcal{H}$
and such that for every vertex $v\in V$, we have:

\begin{equation}\label{eq:k-summable hypergraph}
|\{i:v\in A_i\}| = |\{i: v\in B_i\}|.
\end{equation}
We say that a hypergraph $\mathcal{H}$ is \emph{$k$-asummable} if it is not $k$-summable and it is \emph{asummable} if it is $k$-asummable for all $k \geq 2$.

\begin{corollary}\label{cor:Threshold hyp by asummability}
A hypergraph $\mathcal{H}$ is threshold if and only if it is asummable.
\end{corollary}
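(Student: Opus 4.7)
The plan is to derive the corollary by combining Proposition~\ref{prop:relation} with the Chow--Elgot Theorem~\ref{thm:BF:characterization}, after verifying that the hypergraph notion of $k$-summability introduced just above corresponds exactly to $k$-summability of the associated positive Boolean function $f_\mathcal{H}$. So my first step would be to record, for a hypergraph $\mathcal{H}=(V,E)$ on $n$ vertices (identifying $V$ with $[n]$), the bijection between subsets $X\subseteq V$ and vectors $\chi^X\in\{0,1\}^V$ via characteristic functions, and to observe that by the definition of $f_\mathcal{H}$, the true points of $f_\mathcal{H}$ are precisely the characteristic vectors of subsets of $V$ containing some hyperedge of $\mathcal{H}$, and the false points are precisely the characteristic vectors of subsets not containing any hyperedge.

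Next I would translate the summability condition. Given subsets $A_1,\ldots,A_r$ (each containing a hyperedge) and $B_1,\ldots,B_r$ (none containing a hyperedge), set $y^i=\chi^{A_i}$ and $x^i=\chi^{B_i}$. Then $y^1,\ldots,y^r$ are true points and $x^1,\ldots,x^r$ are false points of $f_\mathcal{H}$, and for any vertex $v\in V$ the $v$-th coordinate of $\sum_{i=1}^r y^i$ equals $|\{i:v\in A_i\}|$ while the $v$-th coordinate of $\sum_{i=1}^r x^i$ equals $|\{i:v\in B_i\}|$. Hence the equality of integer sums $\sum_{i=1}^r y^i=\sum_{i=1}^r x^i$ in $\mathbb{Z}^V$ is exactly the condition~\eqref{eq:k-summable hypergraph}. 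Conversely, given true points $y^1,\ldots,y^r$ and false points $x^1,\ldots,x^r$ of $f_\mathcal{H}$ with $\sum y^i=\sum x^i$, let $A_i$ and $B_i$ be their supports; positivity of $f_\mathcal{H}$ guarantees that each $A_i$ contains a hyperedge and no $B_i$ does. This establishes that $\mathcal{H}$ is $k$-summable if and only if $f_\mathcal{H}$ is $k$-summable, and therefore $\mathcal{H}$ is asummable if and only if $f_\mathcal{H}$ is asummable.

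To finish, I would chain the equivalences: by Proposition~\ref{prop:relation}, $\mathcal{H}$ is threshold if and only if $f_\mathcal{H}$ is threshold; by Theorem~\ref{thm:BF:characterization}, the latter holds if and only if $f_\mathcal{H}$ is asummable; and by the translation just described, this is equivalent to $\mathcal{H}$ being asummable.

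I do not anticipate any real obstacle here: the definition of $k$-summability for hypergraphs given immediately before the corollary was clearly engineered to mirror the Boolean definition, so the proof is essentially a bookkeeping exercise of passing between a $\{0,1\}$-vector and its support. The only point requiring slight care is ensuring the correspondence is clean in both directions, in particular that the supports of arbitrary true and false points of $f_\mathcal{H}$ indeed satisfy the ``contains/does not contain a hyperedge'' conditions, which follows from the monotonicity of $f_\mathcal{H}$ and the form of its defining DNF.
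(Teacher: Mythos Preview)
Your proposal is correct and matches the paper's intended approach exactly: the paper presents this corollary without an explicit proof, simply stating that it is obtained by ``applying Theorem~\ref{thm:BF:characterization} to the language of hypergraphs,'' and your argument spells out precisely that translation via Proposition~\ref{prop:relation} and the support/characteristic-vector bijection.
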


A hypergraph $\mathcal{H} = (V,E)$ is said to be {\em Sperner} (or: a \emph{clutter}) if no hyperedge of $\cal H$ contains another hyperedge, that is, if for every two distinct hyperedges $e$ and $f$ of $\cal H$, it holds that $\min\{|e\setminus f|,|f\setminus e|\}\ge 1\,.$ Chiarelli and Milani\v c defined in~\cite{ChiMil13,MR3281177} the notion of \emph{dually Sperner hypergraphs} as the hypergraphs such that the inequality $\min\{|e\setminus f|,|f\setminus e|\}\le  1$ holds for every pair of distinct hyperedges $e$ and $f$ of $\cal H$. It was proved in~\cite{ChiMil13,MR3281177} that dually Sperner hypergraphs are threshold; they were applied in the characterizations of total domishold graphs and their hereditary variant. Boros et al.~introduced in~\cite{BGM2016+} the following restriction of dually Sperner hypergraphs.

\begin{definition}[Boros et al.~\cite{BGM2016+}]
A hypergraph $\mathcal{H} = (V,E)$ is said to be {\em $1$-Sperner} if for every two distinct hyperedges $e$ and $f$ of $\cal H$, it holds that
$\min\{|e\setminus f|,|f\setminus e|\}= 1\,.$
\end{definition}

Note that a hypergraph is $1$-Sperner if and only if it is both Sperner and dually Sperner. In particular,
for Sperner hypergraphs the notions of dually Sperner and $1$-Sperner hypergraphs coincide.
Since a hypergraph $\mathcal{H}$ is threshold if and only if the
Sperner hypergraph obtained from $\mathcal{H}$ by keeping only its inclusion-wise minimal hyperedges is threshold,
the fact that dually Sperner hypergraphs are threshold is equivalent to the following
fact, proved constructively by Boros et al.~in~\cite{BGM2016+} using a composition result for $1$-Sperner hypergraphs developed therein.

\begin{theorem}[Chiarelli and Milani\v c~\cite{MR3281177}, Boros et al.\cite{BGM2016+}]\label{lem:1-Sperner-threshold}
Every $1$-Sperner hypergraph is threshold.
\end{theorem}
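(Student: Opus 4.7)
The plan is to prove the theorem by induction on the number of hyperedges (or, more conveniently, on $|V(\mathcal{H})|+|E(\mathcal{H})|$), exhibiting a decomposition of any $1$-Sperner hypergraph into strictly smaller $1$-Sperner hypergraphs whose separating structures can then be composed into one for $\mathcal{H}$. The base case is immediate: if $|E(\mathcal{H})|\le 1$, assigning unit weight to the elements of the (at most one) hyperedge and setting the threshold equal to its size gives a separating structure, and if $\mathcal{H}$ has no hyperedges the zero weights with threshold $1$ work.

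For the decomposition I would first extract the rigid structure imposed by $1$-Sperner-ness on a minimum-cardinality hyperedge $e_0$: since $|e_0|\le |f|$ implies $|e_0\setminus f|\le |f\setminus e_0|$, the $1$-Sperner condition forces $|e_0\setminus f|=1$ for every $f\neq e_0$. Thus each other hyperedge misses exactly one vertex of $e_0$, which already constrains the hypergraph severely. A more refined split, along the lines of the composition result of Boros et al.~\cite{BGM2016+}, would single out a distinguished vertex $v$ and partition $E(\mathcal{H})$ into the family $E_v$ of hyperedges containing $v$ and the family $E_{\bar v}$ of those avoiding it; using the $1$-Sperner condition on cross-pairs $e\in E_v$, $f\in E_{\bar v}$ (where $v\in e\setminus f$ implies $|f\setminus e|\le 1$ whenever $|e\setminus f|\ge 2$), one shows that these two subfamilies, suitably restricted to $V\setminus\{v\}$, yield strictly smaller $1$-Sperner hypergraphs.

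The inductive step then invokes integral separating structures $(w',t')$ and $(w'',t'')$ for the two pieces, scales them so their contributions cannot be confused, sets the weight of the distinguished vertex $v$ large enough to tip any set containing a hyperedge of $E_v$ above the new threshold while being compatible with the separating behaviour on $E_{\bar v}$, and chooses a joint threshold $t$ accordingly. The main obstacle, and the place where the $1$-Sperner hypothesis is truly used, is certifying that the merged weights separate transversals from non-transversals on the cross-pairs: one must verify that every set containing some hyperedge of $\mathcal{H}$ achieves weight at least $t$ while every hyperedge-free set remains strictly below $t$, and it is exactly the $\min\{|e\setminus f|,|f\setminus e|\}=1$ condition that prevents an inconsistent configuration from arising across the split. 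A conceptually cleaner alternative, should the combinatorial bookkeeping prove intractable, is to appeal to Chow--Elgot's criterion (Corollary~\ref{cor:Threshold hyp by asummability}) and prove that $1$-Sperner hypergraphs are $k$-asummable for every $k\ge 2$: given a hypothetical relation $\sum_{i=1}^r \mathbf{1}_{A_i}=\sum_{i=1}^r \mathbf{1}_{B_i}$ with each $A_i$ containing a hyperedge $e_i$ and each $B_i$ hyperedge-free, one would argue that the extreme overlap forced by $|e_i\setminus e_j|\le 1$ on one side of every hyperedge pair makes such a coincidence impossible.
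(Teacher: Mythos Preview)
The paper does not supply its own proof of this theorem: it is quoted from~\cite{MR3281177} and~\cite{BGM2016+}, with the remark that Boros et al.\ prove it ``constructively \ldots\ using a composition result for $1$-Sperner hypergraphs developed therein.'' Your primary plan---pick a distinguished vertex, split the hyperedge set into those containing it and those not, verify that both pieces remain $1$-Sperner on one fewer vertex, obtain integral separating structures by induction, and merge them via appropriate scaling---is precisely that composition approach. So at the level of strategy you are aligned with the cited proof.

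That said, your proposal stays at the level of an outline and leaves the crucial step unverified. You correctly flag the issue (``certifying that the merged weights separate \ldots\ on the cross-pairs''), but you do not actually carry it out: you need to specify \emph{which} vertex $v$ to split on, prove that $\{e\setminus\{v\}:e\in E_v\}$ and $E_{\bar v}$ are each $1$-Sperner (this is where the choice of $v$ matters---an arbitrary vertex will not do), and then give explicit formulas for the merged weights and threshold together with a verification that no hyperedge-free set can reach the threshold. Your observation about a minimum-cardinality hyperedge $e_0$ (that every other hyperedge misses exactly one vertex of $e_0$) is the right starting point for choosing $v$, but you abandon it before extracting the decomposition from it. The fallback via asummability is plausible in spirit but is not a proof either: you would still have to exhibit a concrete contradiction from a purported summability relation, and the one-line hint you give does not do that.
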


\begin{sloppypar}
\section{Connected-domishold graphs via hypergraphs and Boolean functions}\label{sec:hypergraphs}
\end{sloppypar}

In a previous work~\cite[Proposition 4.1 and Theorem 4.5]{MR3281177}, total domishold graphs were characterized in terms of
thresholdness of a derived hypergraph and a derived Boolean function. In this section we give similar characterizations of connected-domishold graphs.

We first recall some relevant definitions and a result from~\cite{MR3281177}. A \emph{total dominating set} in a graph $G$ is a set $S\subseteq V(G)$ such that every vertex of $G$ has a neighbor in $S$. Note that only graphs without isolated vertices have total dominating sets.
A graph $G=(V,E)$ is said to be {\em total domishold} (TD for short) if there exists a pair $(w,t)$ where $w:V\to \mathbb{R}_+$ is a weight function and $t\in \mathbb{R}_+$ is a threshold such that for every subset $S\subseteq V$, $w(S):= \sum_{x\in S}w(x)\ge t$ if and only if $S$ is a total dominating set in $G$. A pair $(w,t)$ as above will be referred to as a {\em total domishold (TD) structure} of $G$.
The \emph{neighborhood hypergraph} of a graph $G$ is the hypergraph denoted by $\MNH(G)$
and defined as follows: the vertex set of $\MNH(G)$ is $V(G)$ and the hyperedge set consists precisely of the
\emph{minimal neighborhoods} in $G$, that is, of the inclusion-wise minimal sets in the family of neighborhoods $\{N(v):v\in V(G)\}$.\footnote{In~\cite{MR3281177}, the neighborhood hypergraph of $G$ was named \emph{reduced neighborhood hypergraph} (of $G$) and denoted
by $\mathcal{RN}(G)$. We changed the terminology in analogy with the term ``cutset hypergraph'', which will be introduced shortly.}
Note that a set $S\subseteq V(G)$ is a total dominating set in $G$ if and only if it is a transversal of $\MNH(G)$.

\medskip
\begin{proposition}[Chiarelli and Milani\v c~{\cite{MR3281177}}]\label{prop:t-dom-graphs}
For a graph $G=(V,E)$, the following are equivalent:
\begin{enumerate}
  \item $G$ is total domishold.
  \item Its neighborhood hypergraph $\MNH(G)$ is threshold.
\end{enumerate}
\end{proposition}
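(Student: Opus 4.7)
The plan is to link the two conditions via the elementary fact that the family of total dominating sets of $G$ coincides with the family of transversals of $\MNH(G)$. First I would verify this identity: $S\subseteq V(G)$ is a total dominating set if and only if $S\cap N(v)\neq\emptyset$ for every $v\in V$, and since every neighborhood $N(v)$ contains some minimal neighborhood $N(v')\subseteq N(v)$, the latter condition is equivalent to $S\cap e\neq\emptyset$ for every hyperedge $e$ of $\MNH(G)$. This simple set-theoretic observation is what makes the rest of the argument go through.

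Next I would exploit the elementary complementation duality: for any hypergraph $\mathcal{H}=(V,E)$ and any $X\subseteq V$, the set $V\setminus X$ is a transversal of $\mathcal{H}$ if and only if $X$ contains no hyperedge of $\mathcal{H}$. For the implication $(1)\Rightarrow(2)$, given a TD structure $(w,t)$ of $G$, I would define $w'=w$ and $t'=w(V)-t$; combining the duality with the TD property yields, for every $X\subseteq V$, the chain of equivalences: $X$ contains no hyperedge of $\MNH(G)$ iff $V\setminus X$ is a TD set iff $w(V\setminus X)\geq t$ iff $w'(X)\leq t'$. The converse $(2)\Rightarrow(1)$ follows by the symmetric substitution $w=w'$ and $t=w'(V)-t'$.

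The main obstacle is ensuring that the derived threshold $t'=w(V)-t$ is non-negative, as required by the definition of threshold hypergraph. This is automatic when $V$ itself is a TD set of $G$, i.e., when $G$ has no isolated vertex (so $w(V)\geq t$); the case where $G$ has an isolated vertex is degenerate and can be handled separately, since then $G$ has no TD set at all and $\MNH(G)$ contains the empty hyperedge, so both conditions become vacuous in a compatible way. Alternatively, one can sidestep this non-negativity bookkeeping by passing to Boolean functions: defining $f_G$ as the positive function whose true points are the characteristic vectors of TD sets, a direct computation gives $f_G=f_{\MNH(G)}^{d}$, and the equivalence then follows at once from Proposition~\ref{prop:relation} together with the closure of threshold Boolean functions under dualization recalled in Section~\ref{sec:prelim}. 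Either approach yields the result, and I would likely present the first since it is elementary and produces an explicit weight transformation.
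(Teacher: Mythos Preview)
The paper does not prove this proposition; it is quoted from~\cite{MR3281177} without proof. Your argument is correct and, in fact, parallels the proof that the paper \emph{does} give for the analogous Proposition~\ref{prop:c-dom-graphs} on connected-domishold graphs: there too the key steps are (a) identifying the relevant sets (connected dominating sets, resp.\ total dominating sets) with the transversals of the derived hypergraph, and (b) passing to the dual Boolean function to swap ``transversal'' for ``contains no hyperedge.'' Your two alternatives---explicit weight complementation versus closure of threshold functions under dualization---are exactly the two faces of that same argument.

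One small caveat on the degenerate case: under the conventions in this paper the two conditions are \emph{not} automatically compatible when $G$ has an isolated vertex. In that case $G$ is vacuously TD (take $w\equiv 0$, $t=1$), but $\MNH(G)$ has $\emptyset$ as a hyperedge, so every $X\subseteq V$ contains a hyperedge and the threshold condition would force $0=w(\emptyset)>t\ge 0$, which is impossible. So your phrase ``both conditions become vacuous in a compatible way'' is not accurate as stated; one needs either to restrict to graphs without isolated vertices or to invoke a convention from~\cite{MR3281177}. This does not affect the substance of the argument for the non-degenerate case.
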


\begin{sloppypar}
The constructions of the derived hypergraph and the derived Boolean function used in our characterizations of connected-domishold graphs in terms of their thresholdness are specified by Definitions~\ref{def:MCH} and~\ref{def:MCH-Bf}.
\end{sloppypar}

\begin{definition}\label{def:MCH}
Given a graph $G$, the {\em cutset hypergraph} of $G$ is the hypergraph $\MCH(G)$ with vertex set $V(G)$ whose hyperedges are precisely the minimal cutsets in $G$.
\end{definition}

Given a finite non-empty set $V$, we denote by $\{0,1\}^V$ the set of all binary vectors with coordinates indexed by $V$.
Given a graph $G = (V,E)$ and a binary vector $x\in \{0,1\}^V$, its \emph{support set} is the set denoted by
$S(x)$ and defined by $S(x)=\{v\in V\,:\,x_v = 1\}$. In the following definition, we associate a Boolean function to a given $n$-vertex graph $G$.
In order to avoid fixing a bijection between its vertex set and the set $[n]$, we will consider the corresponding Boolean function as defined on the set
$\{0,1\}^{V}$, where $V = V(G)$. Accordingly, a separating structure of such a Boolean function can be seen as a pair
$(w,t)$ where $w:V\to \mathbb{R}^+$ and $t\in \mathbb{R}^+$ such that for every $x\in \{0,1\}^V$, we have
$f(x) = 0$ if and only if $\sum_{v\in S(x)}w(v)\le t$.

\begin{definition}\label{def:MCH-Bf}
Given a graph $G = (V,E)$, its {\em cutset function} is the positive Boolean function \hbox{$f_G^{\textit{cut}}:\{0,1\}^{V}\to\{0,1\}$} that takes value $1$ precisely on vectors $x\in \{0,1\}^V$ whose support set contains some minimal cutset of $G$.
\end{definition}

\begin{sloppypar}
The announced characterizations of connected-domishold graphs in terms of their cutset hypergraphs and cutset functions are given in the following proposition. The proof is based on two ingredients: the characterization of the connected dominating sets of a given (non-complete) graph given by Proposition~\ref{prop:c-dom-ms} and the fact that threshold Boolean functions are closed under dualization.
\end{sloppypar}

\begin{proposition}\label{prop:c-dom-graphs}
For a graph $G=(V,E)$, the following are equivalent:
\begin{enumerate}
  \item $G$ is connected-domishold.
  \item Its cutset hypergraph $\MCH(G)$ is threshold.
  \item Its cutset function $f_G^{\textit{cut}}$ is threshold.
\end{enumerate}
Moreover, if $G$ is not a complete graph and $(w,t)$ is an integral separating structure of $f_G^{\textit{cut}}$ or of $\MCH(G)$, then
$(w,w(V)-t)$ is a CD structure of~$G$.
\end{proposition}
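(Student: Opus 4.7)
The plan is to derive everything from the combinatorial characterization in Proposition~\ref{prop:c-dom-ms} using the complementation bijection $D \mapsto V \setminus D$ on subsets of $V$, combined with the immediate translation between hypergraphs and Boolean functions in Proposition~\ref{prop:relation}.

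First I would observe that the equivalence $(2) \Leftrightarrow (3)$ is Proposition~\ref{prop:relation} applied to the positive DNF $\bigvee_{S}\bigwedge_{v\in S}x_v$ where $S$ ranges over the minimal cutsets of $G$, since by construction $f_G^{\textit{cut}} = f_{\MCH(G)}$; moreover, the separating structures of $\MCH(G)$ and $f_G^{\textit{cut}}$ coincide, so the moreover clause for one of them is the moreover clause for the other. The complete-graph case is then immediate: $K_n$ has no minimal cutsets, $\MCH(K_n)$ has empty hyperedge set (trivially threshold), $f_{K_n}^{\textit{cut}} \equiv 0$ is threshold, and $K_n$ is connected-domishold by Example~\ref{exa:CompleteGraph}.

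Assuming henceforth that $G$ is not complete, the heart of the argument is the following chain of equivalences, valid for every $D \subseteq V$:
\[
  D \text{ is a CD set of } G
  \;\Longleftrightarrow\;
  V\setminus D \text{ contains no minimal cutset of } G
  \;\Longleftrightarrow\;
  f_G^{\textit{cut}}(\mathbf{1}_{V\setminus D}) = 0,
\]
where the first step is Proposition~\ref{prop:c-dom-ms} (applied via complementation) and the second is the definition of $f_G^{\textit{cut}}$. Given an integral separating structure $(w, t)$ of $f_G^{\textit{cut}}$ (equivalently of $\MCH(G)$), the rightmost condition rewrites as $w(V\setminus D) \leq t$, i.e., $w(D) \geq w(V) - t$; so $(w, w(V) - t)$ serves simultaneously as a CD structure of $G$, yielding $(3)\Rightarrow(1)$ together with the moreover clause. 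For the converse $(1)\Rightarrow(3)$, starting from a CD structure $(w,t)$ of $G$ and reading the same chain backwards shows that $(w, w(V) - t)$ is a separating structure of $f_G^{\textit{cut}}$, using that $y \mapsto V\setminus S(y)$ bijects $\{0,1\}^V$ with $2^V$.

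The only delicate point I anticipate is checking the non-negativity of the constructed threshold $w(V) - t$. On the CD side, $V$ itself is a CD set of the connected non-complete graph $G$, so $w(V) \geq t$. On the Boolean-function side, any connected non-complete graph has a non-empty minimal cutset, so $f_G^{\textit{cut}}(\mathbf{1}_V) = 1$, forcing $w(V) > t$ in any separating structure. Beyond this bookkeeping and the direct verification that the complement map is a bijection, I do not foresee any genuine obstacle.
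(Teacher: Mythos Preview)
Your proposal is correct and follows essentially the same approach as the paper. The paper phrases the argument via the dual Boolean function $(f_G^{\textit{cut}})^d$ and the general fact that thresholdness is preserved under dualization, whereas you carry out the complementation $D\mapsto V\setminus D$ by hand; these are the same computation, and your direct version even avoids citing the dualization-preserves-thresholdness fact as a black box.
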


\begin{proof}
We consider two cases, depending on whether $G$ is a complete graph or not.

\begin{thmcase}
$G$ is complete.

In this case all the three statements hold. Recall that every complete graph is CD (see Example~\ref{exa:CompleteGraph}). Since complete graphs have no cutsets, the set of hyperedges of the cutset hypergraph $\MCH(G)$ is empty. Hence the hypergraph $\MCH(G)$ is threshold.
The absence of (minimal) cutsets also implies that the cutset function $f_G^{\textit{cut}}$ is constantly equal to $0$
and hence threshold. \end{thmcase}

\begin{thmcase}
$G$ is not complete.

First we will show the equivalence between statements $1$ and $3$. Since a positive Boolean function $f$ is threshold if and only if its dual function $f^d(x) = \overline{f(\overline x)}$ is threshold, it suffices to argue that
$G$ is connected-domishold if and only if $(f_G^{\textit{cut}})^d$ is threshold.

We claim that for every $x\in \{0,1\}^{V}$, we have $(f_G^{\textit{cut}})^d(x) = 1$ if and only if
$S(x)$, the support set of $x$, is a connected dominating set of $G$. Let $x\in \{0,1\}^{V}$ and let $S$ be the support set of $x$.
By definition, $(f_G^{\textit{cut}})^d(x) = 1$ if and only if $f_G^{\textit{cut}}(\overline x) = 0$,
which is the case if and only if $V\setminus S$ does not contain any minimal cutset of $G$. This is in turn equivalent to the condition that $S$ is a transversal of the cutset hypergraph of $G$, and, by Proposition~\ref{prop:c-dom-ms}, to the condition that $S$ is a connected dominating set of $G$.
Therefore, $(f_G^{\textit{cut}})^d(x) = 1$ if and only if $S$ is a connected dominating set of $G$, as claimed.

Now, if $G$ is connected-domishold, then it has an integral connected-domishold structure, say $(w,t)$, and
$(w,t-1)$ is a separating structure of the dual function $(f_G^{\textit{cut}})^d$, which implies that
$(f_G^{\textit{cut}})^d$ is threshold. Conversely, if the dual function is threshold, with an integral separating structure $(w,t)$,
then $(w,t+1)$ is a connected-domishold structure of $G$. This establishes the equivalence between statements $1$ and $3$.

Next, we show the equivalence between statements $2$ and $3$. Note that the complete DNF of $f_G^{\textit{cut}}$, the cutset function of $G$, is given by the expression $\bigvee_{S\in \mathcal{C}(G)}\bigwedge_{u\in S}x_u$.
It now follows directly from the definitions of threshold Boolean functions and threshold hypergraphs that function $f_G^{\textit{cut}}(x)$ is threshold if and only if cutset hypergraph $\MCH(G)$ is threshold.

Finally, if $(w,t)$ is an integral separating structure of $f_G^{\textit{cut}}$, then
$(w,w(V)-t-1)$ is a separating structure of $(f_G^{\textit{cut}})^d$ and hence
$(w,w(V)-t)$ is a connected-domishold structure of $G$.
\end{thmcase}
\end{proof}

Recall that every $1$-Sperner hypergraph is threshold (Theorem~\ref{lem:1-Sperner-threshold}) and every threshold hypergraph is asummable (Corollary~\ref{cor:Threshold hyp by asummability}). Thus, in particular, every threshold hypergraph is \hbox{$2$-asummable}.
Applying these relations to the specific case of the minimal cutset  hypergraphs, Proposition~\ref{prop:c-dom-graphs} leads to the following.

\begin{corollary}\label{prop:c-dom-graphs-ms-2-asummable}
For every graph $G$, the following holds:
\begin{enumerate}
  \item If the cutset hypergraph $\MCH(G)$ is $1$-Sperner, then $G$ is connected-domishold.
  \item If $G$ is connected-domishold, then its cutset hypergraph $\MCH(G)$ is $2$-asummable.
\end{enumerate}
\end{corollary}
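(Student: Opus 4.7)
The plan is to prove both items as direct compositions of three earlier results, with essentially no new combinatorial work. The setup is that Proposition~\ref{prop:c-dom-graphs} converts the CD property of $G$ into the thresholdness of $\MCH(G)$ in both directions, and Theorem~\ref{lem:1-Sperner-threshold} and Corollary~\ref{cor:Threshold hyp by asummability} sandwich thresholdness of hypergraphs between $1$-Spernerness (which is stronger) and $2$-asummability (which is weaker, since asummability means $k$-asummability for every $k\ge 2$). The two items of the corollary are then obtained by reading off the two endpoints of this sandwich and transferring them across the equivalence of Proposition~\ref{prop:c-dom-graphs}.

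For the first item, I would assume that the cutset hypergraph $\MCH(G)$ is $1$-Sperner. Theorem~\ref{lem:1-Sperner-threshold} then yields that $\MCH(G)$ is threshold. Since condition~2 of Proposition~\ref{prop:c-dom-graphs} is equivalent to condition~1, it follows that $G$ is connected-domishold.

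For the second item, I would start with the assumption that $G$ is connected-domishold. The equivalence of conditions~1 and~2 of Proposition~\ref{prop:c-dom-graphs} gives that $\MCH(G)$ is threshold. Corollary~\ref{cor:Threshold hyp by asummability} then implies that $\MCH(G)$ is asummable, which by definition means $k$-asummable for every $k\ge 2$, and in particular for $k=2$.

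There is no real obstacle here; the only minor point to keep in mind is the edge case where $G$ is complete, in which $\MCH(G)$ has no hyperedges at all. In that case $\MCH(G)$ is vacuously $1$-Sperner, threshold, and $2$-asummable, and $G$ is connected-domishold by Example~\ref{exa:CompleteGraph}, so both implications hold trivially. A one-sentence remark, or simply invoking Proposition~\ref{prop:c-dom-graphs} which already handles the complete case explicitly, suffices.
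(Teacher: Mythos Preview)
Your proof is correct and follows exactly the same approach as the paper: it derives both items by composing Proposition~\ref{prop:c-dom-graphs} with Theorem~\ref{lem:1-Sperner-threshold} and Corollary~\ref{cor:Threshold hyp by asummability}, respectively. The paper presents this reasoning in the paragraph immediately preceding the corollary rather than as a separate proof, and your remark on the complete-graph edge case is already absorbed into Proposition~\ref{prop:c-dom-graphs}.
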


We will show in Section~\ref{sec:examples} that neither of the two statements in Corollary~\ref{prop:c-dom-graphs-ms-2-asummable} can be reversed. On the other hand, we will prove in Section~\ref{sec:main} that all the three properties become equivalent in the hereditary setting.

\section{Connected-domishold split graphs}\label{sec:split}

The following examples show that for general connected graphs, the CD and TD properties are incomparable:
\begin{itemize}
  \item The path $P_6$ is connected-domishold (it has a unique minimal connected dominating set, formed by its internal vertices)
but it is not total domishold (see, e.g.,~\cite{MR3281177}).
  \item The graph in Fig.~\ref{fig:example} is TD but not CD.

\begin{figure}[!ht]
  \begin{center}
\includegraphics[width=0.35\linewidth]{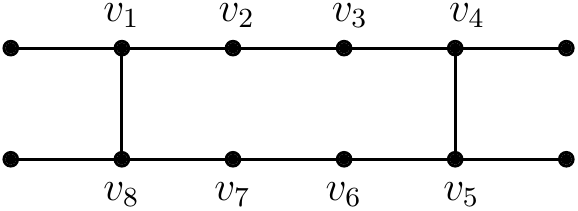}
  \end{center}
  \caption{A TD graph that is not CD.}
\label{fig:example}
\end{figure}
The graph is total domishold: it has a unique minimal total dominating set, namely $\{v_1,v_4,v_5,v_8\}$.
On the other hand, the graph is not connected-domishold. This can be shown by observing that its cutset hypergraph is not $2$-asummable and applying Corollary~\ref{prop:c-dom-graphs-ms-2-asummable}.
To see that the cutset hypergraph of $G$ is $2$-summable, note that
condition~\eqref{eq:k-summable hypergraph} is satisfied if we take $k = r = 2$ and
$A_1 = \{v_2,v_7\}$, $A_2 = \{v_3,v_6\}$, $B_1 = \{v_2,v_3\}$, and $B_2 = \{v_6,v_7\}$.
\end{itemize}

Interestingly, we will show in Section~\ref{sec:main} that if the CD and TD properties are required also for all induced subgraphs, then
the corresponding graph classes become comparable (see Corollary~\ref{cor:inclusion}). In the rest of this section, we will prove that the two properties coincide in the class of connected split graphs and examine some consequences of this result. Recall that a graph is split if and only if its vertex set has a partition into a clique and an independent set. Foldes and Hammer characterized split graphs as exactly the graphs that are $\{2K_2,C_4,C_5\}$-free~\cite{MR0505860}. In particular, this implies that a split graph can be disconnected only if it has an isolated vertex.

\begin{lemma}\label{lem:universal}
Let $G$ be a connected graph and let $G'$ be the graph obtained from $G$ by adding to it a universal vertex.
Then, $G$ is connected-domishold if and only if $G'$ is connected-domishold.
\end{lemma}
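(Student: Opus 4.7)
The plan is to argue directly from Definition~\ref{def:main}, after first characterizing the connected dominating sets of $G'$ in terms of those of $G$. Let $u$ denote the universal vertex added to $G$ to form $G'$. Two facts are immediate from the fact that $u$ is adjacent to every other vertex of $G'$: \emph{(i)} every set $S \subseteq V(G')$ with $u \in S$ is a CD set of $G'$, since $u$ alone guarantees both the connectedness of $G'[S]$ and the domination of $V(G')$; and \emph{(ii)} for $S \subseteq V(G)$ (so $u \notin S$), the induced subgraph $G'[S]$ equals $G[S]$, the neighborhoods satisfy $N_{G'}(v) \cap S = N_G(v) \cap S$ for every $v \in V(G)$, and any nonempty such $S$ automatically dominates $u$ in $G'$; hence such an $S$ is a CD set of $G'$ if and only if it is a CD set of $G$.

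With \emph{(i)} and \emph{(ii)} in hand, both implications are essentially one-line constructions. For the forward direction, given a CD structure $(w,t)$ of $G$, I would extend $w$ to $V(G')$ by setting $w'(u) = t$ (which is non-negative) and keep the threshold $t' = t$: any $S$ with $u \in S$ then satisfies $w'(S) \ge w'(u) = t'$ and is a CD set of $G'$ by \emph{(i)}, while for $S \subseteq V(G)$ one has $w'(S) = w(S)$, and both the weight condition and CD-ness transfer from $G$ to $G'$ via \emph{(ii)}. Conversely, given a CD structure $(w',t')$ of $G'$, the restriction $(w'|_{V(G)}, t')$ separates CD sets from non-CD sets in $G$, since for every $S \subseteq V(G)$ one has $w'|_{V(G)}(S) = w'(S)$ and, by \emph{(ii)}, $S$ is a CD set of $G$ if and only if it is a CD set of $G'$.

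There is no substantive obstacle: once the characterization of CD sets of $G'$ via the universal vertex is made explicit, the weight manipulations are routine and there are no degenerate cases to treat separately (the cases where $G$ is a single vertex or a complete graph are automatically handled, since the extension $w'(u)=t$ and the restriction $w'|_{V(G)}$ are well-defined regardless). As an alternative indirect route, one could invoke Proposition~\ref{prop:c-dom-graphs}, using the fact that the minimal cutsets of $G'$ are exactly the sets of the form $\{u\} \cup T$ with $T$ a minimal cutset of $G$, and translating threshold structures between $\MCH(G')$ and $\MCH(G)$; however, the direct approach described above is more transparent and avoids separate treatment of the complete-graph case.
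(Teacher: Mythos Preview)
Your proposal is correct and follows essentially the same approach as the paper's own proof: both arguments characterize the CD sets of $G'$ as the CD sets of $G$ together with all subsets containing $u$, then construct the CD structure in each direction by extending with $w'(u)=t$ (forward) and restricting $w'$ to $V(G)$ (converse). Your write-up is in fact slightly more explicit in justifying facts \emph{(i)} and \emph{(ii)}, but the underlying argument is identical.
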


\begin{proof}
Let $V(G') = V(G)\cup \{u\}$, where $u$ is the added vertex.
Suppose that $G$ is connected-domishold and let $(w,t)$ be a CD structure of $G$.
Since the set of connected dominating sets of $G'$ consists of all connected dominating sets of $G$ together with all
subsets of $V(G')$ containing $u$, we can obtain a
CD structure, say $(w',t')$, of $G'$ by setting $w'(x) = w(x)$ for all $x\in V(G)$, $w'(u) = t$, and $t' = t$.
Therefore, $G'$ is connected-domishold.

Conversely, if $(w',t')$ is a CD structure of $G'$, then $(w,t)$ where $t = t'$ and $w$ is the restriction of $w'$ to $V(G)$
is a CD structure of $G$. This is because a set $X\subseteq V(G)$ is a connected dominating set of $G$ if and only if it is a connected dominating set of $G'$. Therefore, if $G'$ is connected-domishold then so is $G$.
\end{proof}

Recall that given a connected graph $G$, we denote by $\MCH(G)$ (resp., $\MNH(G)$) its cutset (resp., neighborhood) hypergraph.

\begin{lemma}\label{lem:split}
Let $G$ be a connected split graph without universal vertices. Then $\MCH(G) = \MNH(G)$.
\end{lemma}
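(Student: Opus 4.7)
The plan is to fix a split partition $V(G) = K \cup I$ of $G$ (with $K$ a clique and $I$ an independent set) and to show that both $E(\MCH(G))$ and $E(\MNH(G))$ coincide with the family of inclusion-minimal elements of $\{N(v) : v \in I\}$. The connectedness and no-universal-vertex hypotheses force $|K|\ge 2$ and $|I|\ge 2$ (else $G$ would be empty, a single vertex, a star, or disconnected). The first identification, for $\MNH(G)$, rests on the observation that for every $u \in K$, non-universality of $u$ provides some $v \in I$ with $u \notin N(v)$, whence $N(v) \subseteq K \setminus \{u\} \subseteq N(u)$; consequently, every $N(u)$ with $u \in K$ contains the neighborhood of some $v \in I$, and the inclusion-minimal elements of $\{N(x) : x \in V(G)\}$ coincide with those of $\{N(v) : v \in I\}$.

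For $\MCH(G)$, I would start from the following description of cutsets: since $K$ is a clique, $S$ is a cutset iff either (a) $K \subseteq S$ and $|I \setminus S| \ge 2$, or (b) $K \not\subseteq S$ and some $v \in I \setminus S$ satisfies $N(v) \subseteq S$. The no-universal-vertex assumption yields some $v_0 \in I$ with $N(v_0) \subsetneq K$ (else every vertex of $K$ would be adjacent to every vertex of $I$, hence universal), producing a cutset of type (b) strictly inside $K$; thus no type (a) cutset is minimal. For a minimal type (b) cutset $S$ with witness $v \in I \setminus S$, I would strip vertices in two successive steps: first, any $u \in S \cap I$ satisfies $u \notin N(v)$ (since $I$ is independent), so removing it preserves both conditions of (b), forcing $S \subseteq K$; second, any $u \in S \setminus N(v)$ can be removed similarly, forcing $S = N(v)$. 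Finally, $S = N(v)$ is a minimal cutset precisely when no $v' \in I$ has $N(v') \subsetneq N(v)$, i.e., when $N(v)$ is inclusion-minimal within $\{N(v') : v' \in I\}$; the converse (each such minimal $N(v)$ is in fact a minimal cutset of $G$) follows by the same case analysis.

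Combining the two identifications gives $E(\MCH(G)) = E(\MNH(G))$, and since both hypergraphs share vertex set $V(G)$, the lemma follows. The main obstacle is the twofold reduction argument for minimal type (b) cutsets in the second paragraph; the remaining pieces amount to clean casework leveraging the no-universal-vertex hypothesis.
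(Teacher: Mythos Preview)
Your argument is correct, but it proceeds along a different route than the paper's. The paper shows the two inclusions $E(\MCH(G))\subseteq E(\MNH(G))$ and $E(\MNH(G))\subseteq E(\MCH(G))$ directly: for the first, it takes a minimal cutset $S$, views it as a minimal $u,v$-separator for some non-adjacent pair, and argues by a short case analysis on the split partition that $S\in\{N(u),N(v)\}$; for the second, it uses the first part to show that any strictly smaller cutset inside a minimal neighborhood would itself be a neighborhood, a contradiction. In contrast, you identify both hyperedge sets with a common third family, the inclusion-minimal sets in $\{N(v):v\in I\}$, by (i) observing that every $K$-vertex neighborhood contains an $I$-vertex neighborhood (via non-universality), and (ii) giving an explicit two-type description of all cutsets of a split graph and stripping a minimal one down to some $N(v)$ with $v\in I$. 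Your approach is a bit more computational but yields the bonus of an explicit description of the common hyperedge family; the paper's approach is slightly more structural and avoids the full cutset classification, at the cost of not naming the family so concretely. Both arguments are elementary and of comparable length.
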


\begin{proof}
Fix a split partition of $V(G)$, say $V(G) = K\cup I$ where $K$ is a clique, $I$ is an independent set, and $K\cap I = \emptyset$.
Clearly, the hypergraphs $\MCH(G)$ and $\MNH(G)$ have the same vertex set. We show that the hyperedge sets are also the same in two steps.

First, we show that $E(\MCH(G))\subseteq E(\MNH(G))$, that is, that every minimal cutset is a minimal neighborhood.
To this end, it suffices to show that every minimal cutset $S$ in $G$ \emph{is} a neighborhood, that is, a set of the form $S = N(v)$ for some $v\in V(G)$. This is indeed enough, because if a minimal cutset $S$ in $G$ satisfies $S = N(v)$ for some $v\in V(G)$, but $N(v)$ properly contains some other neighborhood, say $N(u)$, then the fact that $N(u)$ is a cutset in $G$ (for instance, it is a $u,v$-separator) would imply that $S$ is not a minimal cutset.

Let $S$ be a minimal cutset in $G$. Then, $S$ is a minimal $u,v$-separator for some non-adjacent vertex pair $u$, $v$; in particular,
$S\subseteq V(G)\setminus\{u,v\}$. We claim that $N(u)\subseteq S$ or $N(v)\subseteq S$.
Suppose that this is not the case. Then, there exist a neighbor of $u$, say $u'$, such that $u'\not\in S$, and
a neighbor of $v$, say $v'$, such that $v'\not\in S$. Since $\{u,v,u',v'\}\subseteq V(G)\setminus S$ and $u$ and $v$ are in different components of $G-S$,
vertices $u'$ and $v'$ are distinct and non-adjacent. Thus, at least one of $u'$ and $v'$, say $u'$, is in $I$.
This implies that $u\in K$ and therefore $v\in I$, which implies that $v'\in K$ and hence $(u,v',v)$ is a $u,v$-path in $G-S$, a contradiction.
This shows that $N(u)\subseteq S$ or $N(v)\subseteq S$, as claimed. Since each of $N(u)$ and $N(v)$ is a $u,v$-separator, the fact that $S$ is a minimal $u,v$-separator implies that $S\in \{N(u),N(v)\}$. This completes the proof of the inclusion $E(\MCH(G))\subseteq E(\MNH(G))$.

It remains to show that $E(\MNH(G))\subseteq E(\MCH(G))$. Let $S$ be a minimal neighborhood in $G$. Then $S = N(v)$ for some $v\in V(G)$.
Since $v$ is not universal, the set $V(G)\setminus N[v]$ is non-empty. Therefore $S$ is a $v,w$-separator for any $w\in V(G)\setminus N[v]$; in particular, $S$ is a cutset in $G$. Suppose for a contradiction that $S$ is not a minimal cutset in $G$.
Then $S$ properly contains some minimal cutset, say $S'$, in $G$. By the first part of the proof, $S'$ is of the form
$S' = N(z)$ for some $z\in V(G)$. However, since $N(z)$ is a neighborhood properly contained in $S = N(v)$, this contradicts the fact that
$S$ is a minimal neighborhood.
\end{proof}

\begin{theorem}\label{thm:split}
A connected split graph is connected-domishold if and only if it is total domishold.
\end{theorem}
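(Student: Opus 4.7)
The plan is to reduce, via Propositions~\ref{prop:c-dom-graphs} and~\ref{prop:t-dom-graphs}, to showing that the cutset hypergraph $\MCH(G)$ is threshold if and only if the neighborhood hypergraph $\MNH(G)$ is threshold whenever $G$ is a connected split graph. I would proceed by induction on the number $|U|$ of universal vertices of $G$. The base case $|U|=0$ is immediate from Lemma~\ref{lem:split}, which supplies the equality $\MCH(G) = \MNH(G)$.

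For the inductive step, I would pick a universal vertex $u$ of $G$ and set $G' = G - u$. Then $G'$ is split with $|U|-1$ universal vertices, since any non-universal vertex of $G$ has a non-neighbor different from $u$ and hence remains non-universal in $G'$. Lemma~\ref{lem:universal} already delivers the CD side: $G$ is CD iff $G'$ is CD. To mirror this on the TD side I would establish the analog $G$ is TD iff $G'$ is TD. The restriction direction is straightforward, since for $S \subseteq V(G')$ one has $S$ total dominating in $G$ iff $S$ total dominating in $G'$, so a TD structure of $G$ restricts to one of $G'$. The extension direction is more delicate: given a TD structure $(w,t)$ of $G'$, the natural candidate is to set $w(u) = t-1$ while keeping the threshold at $t$, but this correctly classifies sets of the form $\{u\} \cup T$ only when $w(v) \geq 1$ for every $v \in V(G')$. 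To enforce positivity I would use a scaling trick: if $w(v_0) = 0$, then $v_0$ must lie outside every minimal total dominating set of $G'$ (otherwise removing $v_0$ from such a minimal set would preserve the total weight and, by thresholdness, would also be a TD set, contradicting minimality), so $(w',2t)$ with $w'(v_0) = 1$ and $w'(v) = 2w(v)$ for $v \neq v_0$ is again a TD structure of $G'$; iterating over all zero-weight vertices produces a TD structure with all positive weights.

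With the TD analog in place, the induction splits into two sub-cases according to whether $G'$ is connected. When $G'$ is connected, it is a connected split graph with strictly fewer universal vertices, and the inductive hypothesis gives $G'$ CD iff $G'$ TD; this transfers back to $G$ through Lemma~\ref{lem:universal} and the TD analog. When $G'$ is disconnected, necessarily $|U| = 1$---any second universal vertex would give every $v \in I$ a neighbor in $G'$ and preserve connectivity---and the isolated vertex of $G'$ is some $v \in I$ with $N_G(v) = \{u\}$. In this case the singleton $\{u\}$ is the unique minimum element of $\{N(w) : w \in I\}$, so $\MCH(G) = \{\{u\}\}$ and $\MNH(G) = \{\{u\},\, V(G) \setminus \{u\}\}$; both are easily seen to be threshold---the first trivially, the second by assigning $u$ a weight much larger than that of every other vertex---so $G$ is simultaneously CD and TD and the equivalence holds trivially. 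The principal obstacle in executing this plan is the TD analog of Lemma~\ref{lem:universal}, whose forward direction hinges on the scaling argument that rules out zero weights.
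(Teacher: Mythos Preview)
Your proposal is correct and follows essentially the same strategy as the paper: peel off universal vertices one at a time (using Lemma~\ref{lem:universal} on the CD side and its TD analogue on the TD side), handle the case where the deletion disconnects the graph separately, and in the universal-vertex-free base case invoke Lemma~\ref{lem:split} together with Propositions~\ref{prop:c-dom-graphs} and~\ref{prop:t-dom-graphs}. The paper simply cites the TD analogue of Lemma~\ref{lem:universal} from~\cite[Proposition~3.3]{MR3281177}, whereas you prove it from scratch via your scaling trick; that argument is fine but tacitly uses an \emph{integral} TD structure (so that $w(S)\le t-1$ whenever $S$ is not TD), which you should state explicitly. In the disconnected sub-case the paper argues more abstractly (unique minimal CD set, and $G'$ is TD because it has an isolated vertex), while you compute $\MCH(G)$ and $\MNH(G)$ directly; both routes work, though your description of $\MNH(G)$ should note that $N(u)=V(G)\setminus\{u\}$ is itself a minimal neighborhood (since every other neighborhood contains $u$), not only that $\{u\}$ is minimal among $\{N(w):w\in I\}$.
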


\begin{proof}
If $G$ is complete, then $G$ is both connected-domishold and total domishold.
So we may assume that $G$ is not complete.
More generally, we show next that we may assume that $G$ does not have any universal vertices.
Suppose that $G$ has a universal vertex, say $u$, and let $G' = G-u$.
By~\cite[Proposition 3.3]{MR3281177}, $G$ is TD if and only if $G'$ is TD.
If $G'$ is not connected, then $\{u\}$ is the only minimal connected dominating set of $G$
and hence $G$ is connected-domishold in this case. Furthermore, $G$ is also total domishold:
since $G'$ is a disconnected $2K_2$-free graph, $G'$ has an isolated vertex.
Therefore, by~\cite{MR3281177}, $G'$ is TD, and hence so is $G$.
If $G'$ is connected, then by Lemma~\ref{lem:universal}, $G$ is CD if and only if $G'$ is CD.
Therefore, the problem of verifying whether the CD and the TD properties are equivalent for $G$ reduces to the same problem for $G'$.
An iterative application of the above argument eventually reduces the graph to either a graph
where both properties hold or to a connected graph without universal vertices.

Now, let $G$ be a connected split graph without universal vertices.
By Proposition~\ref{prop:c-dom-graphs}, $G$ is connected-domishold if and only if
its cutset hypergraph $\MCH(G)$ is threshold.
By Proposition~\ref{prop:t-dom-graphs}, $G$ is total domishold if and only if
its neighborhood hypergraph $\MNH(G)$ is threshold.
Therefore, to prove the theorem it suffices to show that $\MCH(G) = \MNH(G)$. But this was established in Lemma~\ref{lem:split}.
\end{proof}

Theorem~\ref{thm:split} implies another relation between connected-domishold (split) graphs and threshold hypergraphs,
one that in a sense reverses the one stated in Proposition~\ref{prop:c-dom-graphs}.
Given a hypergraph $\mathcal{H} = (V,E)$, the \emph{split-incidence graph} of $\cal H$ (see, e.g.,~\cite{KLMN}) is the split graph $G$ such that $V(G) = {V}\cup E$, $V$ is a clique, $E$ is an independent set, and $v\in V$ is adjacent to $e\in E$ if and only if $v\in e$.

\begin{theorem}\label{thm:hyp-split}
Let $\mathcal{H} = (V,E)$ be a hypergraph with $\emptyset\not\in E$. Then $\mathcal{H}$ is threshold if and only if its split-incidence graph is connected-domishold.
\end{theorem}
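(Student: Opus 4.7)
The plan is to apply Proposition~\ref{prop:c-dom-graphs}, which reduces ``$G$ is connected-domishold'' to ``$\MCH(G)$ is threshold''; it then suffices to show that $\MCH(G)$ and $\mathcal{H}$ are equivalent from the point of view of thresholdness. My strategy is to compute the minimal cutsets of $G$ explicitly and identify them, as subsets of $V\subseteq V(G)$, with the inclusion-minimal hyperedges of $\mathcal{H}$. Two degenerate situations, namely $E=\emptyset$ and $\mathcal{H}=\{V\}$, force $G$ to be complete, whence both sides of the equivalence hold trivially; I will therefore work in the remaining case in which $\mathcal{H}$ has some hyperedge strictly contained in $V$.

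The main step is the cutset computation. Writing a candidate $S\subseteq V(G)$ as $S=S_V\sqcup S_E$ with $S_V\subseteq V$ and $S_E\subseteq E$, and using that $V$ is a clique in $G$ while each hyperedge-vertex $e\in E$ has neighborhood exactly $e\subseteq V$, one sees that in $G-S$ the set $V\setminus S_V$ (when nonempty) is a single component to which each $e\in E\setminus S_E$ attaches via $e\setminus S_V$; consequently $e$ becomes isolated in $G-S$ precisely when $e\subseteq S_V$. This reading shows that $S$ is a cutset iff either (a) $S_V\subsetneq V$ and some $e\in E\setminus S_E$ satisfies $e\subseteq S_V$, or (b) $V\subseteq S_V$ and $|E\setminus S_E|\geq 2$. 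Removing any vertex from $S_E$ preserves both types (the witnesses in the complement only grow), so every minimal cutset has $S_E=\emptyset$; type~(b) is then ruled out, because under my standing assumption some hyperedge $e\subsetneq V$ exists and $S=e$ is already a strictly smaller cutset than $S=V$. Within type~(a) with $S_E=\emptyset$, inclusion-minimality forces $S_V$ to be an inclusion-minimal subset of $V$ containing a hyperedge of $\mathcal{H}$, that is, $S_V$ itself is an inclusion-minimal hyperedge of $\mathcal{H}$; conversely, every such hyperedge is easily checked to be a minimal cutset of $G$. Thus, viewed on the vertex set $V(G)$, $\MCH(G)$ is obtained from the hypergraph of inclusion-minimal hyperedges of $\mathcal{H}$ by adjoining the vertices in $E$ as isolated.

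The conclusion now follows from two elementary facts: adjoining isolated vertices (assigning them weight $0$ in any separating structure) preserves thresholdness of a hypergraph, and a hypergraph is threshold if and only if its sub-hypergraph of inclusion-minimal hyperedges is threshold, because the two share the same family ``contains a hyperedge''. Combining these observations with the identification above yields $\MCH(G)$ threshold iff $\mathcal{H}$ threshold, and Proposition~\ref{prop:c-dom-graphs} then finishes the proof. The main delicacy I anticipate is the bookkeeping in the minimal-cutset analysis, especially verifying that type~(b) cutsets and the corner case in which $V$ itself happens to be a hyperedge of $\mathcal{H}$ do not furnish any additional minimal cutset.
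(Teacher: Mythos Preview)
Your argument is correct, but it takes a genuinely different route from the paper. The paper's proof is a two-liner: it cites the result from~\cite{MR3281177} that $\mathcal{H}$ is threshold if and only if its split-incidence graph is total domishold, and then applies Theorem~\ref{thm:split} (which says that for connected split graphs the CD and TD properties coincide) to conclude. Your approach instead bypasses total domination entirely: you compute the minimal cutsets of the split-incidence graph $G$ directly, identify them with the inclusion-minimal hyperedges of $\mathcal{H}$ (plus the isolated vertices coming from $E$), and then invoke Proposition~\ref{prop:c-dom-graphs}. In effect you are re-deriving, in this specific setting, the content of Lemma~\ref{lem:split} (that $\MCH(G)=\MNH(G)$ for connected split graphs without universal vertices) without naming it. The paper's route is shorter and showcases Theorem~\ref{thm:split} as the workhorse of the section; your route is more self-contained, since it does not depend on the external result from~\cite{MR3281177} or on Theorem~\ref{thm:split}, at the cost of the explicit cutset bookkeeping you carry out.
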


\begin{proof}
Since $\emptyset\not\in E$, the split-incidence graph of $\cal H$ is connected. It was shown in~\cite{MR3281177} that a hypergraph is threshold if and only if its split-incidence graph is total domishold. The statement of the theorem now follows from Theorem~\ref{thm:split}.
\end{proof}

It might be worth pointing out that in view of Remark~\ref{rem:PCH} and Theorem~\ref{thm:hyp-split}, it is an open problem of whether there is a ``purely combinatorial'' polynomial time algorithm for recognizing connected-domishold split graphs. Further issues regarding the recognition problem of CD graphs will be discussed in Section~\ref{sec:recognition}.

\subsection{Examples related to Corollary~\ref{prop:c-dom-graphs-ms-2-asummable}}\label{sec:examples}

We now show that neither of the two statements in Corollary~\ref{prop:c-dom-graphs-ms-2-asummable} can be reversed.
First we exhibit an infinite family of CD split graphs
whose cutset hypergraphs are not $1$-Sperner.

\begin{example}
Let $n\ge 4$ and let $G = K_n^*$ be the graph obtained from the complete graph $K_n$ by gluing a triangle on every edge.
Formally, $V(G) = \{u_1,\ldots, u_n\}\cup \{v_{ij}: 1\le i<j\le n\}$ and
$E(G) = \{u_iu_j: 1\le i<j\le n\}\cup \{u_iv_{jk}\mid 1\leq j<k\leq n \text{ and } i\in \{j,k\}\}$.
The graph $G$ is a CD graph: setting

$$w(x)=\left\{
                                              \begin{array}{ll}
                                                1, & \hbox{if $x \in \{u_1,\ldots, u_n\}$;} \\
                                                0, & \hbox{otherwise.}
                                              \end{array}
                                            \right.$$
and $t=n-1$ results in a CD structure of $G$.
On the other hand, the cutset hypergraph of $G$ is not $1$-Sperner.
Since every pair of the form $\{u_i,u_j\}$ with $1\le i<j\le n$
is a minimal cutset of $G$, the cutset hypergraph contains
$\{u_1,u_2\}$ and $\{u_3,u_4\}$ as hyperedges and is therefore not $1$-Sperner.
\end{example}

Next, we argue that there exists a split graph $G$
whose cutset hypergraph is $2$-asummable but $G$ is not CD. As observed already in~\cite{MR3281177}, the fact that not every $2$-asummable positive Boolean function is threshold can be used to construct split graphs $G$ such that $\MNH(G)$ is $2$-asummable and $G$ is not total domishold.
The existence of split graphs with claimed properties now follows from
Theorem~\ref{thm:split} and its proof. For the sake of self-containment, we describe an example of such a construction in Appendix.

\section{The hereditary case}\label{sec:main}

\begin{sloppypar}
In this section we present the main result of this paper, Theorem~\ref{thm:characterizations}, which gives several characterizations of graphs all induced subgraphs of which are CD, and derive some of its consequences. The proof of Theorem~\ref{thm:characterizations}
relies on a technical lemma about chordal graphs, which will be proved in Section~\ref{sec:proof}.
\end{sloppypar}

We start with an example showing that, contrary to the classes of threshold and domishold graphs, the class of connected-domishold graphs is not hereditary. We assume notation from Example~\ref{exa:cycle}.

\begin{example}
The graph $G$ obtained from $C_4$ by adding to it a new vertex, say $v_5$, and making it adjacent exactly to one vertex of the $C_4$, say to $v_4$, is CD: the (inclusion-wise) minimal CD sets of $G$ are $\{v_1,v_4\}$ and $\{v_3,v_4\}$, hence a CD structure of $G$ is given by
$w(v_2) = w(v_5) = 0$, $w(v_1) = w(v_3) = 1$, $w(v_4) = 2$, and $t = 3$.
\end{example}

This motivates the following definition.

\begin{sloppypar}
\begin{definition}
A graph $G$ is said to be {\em hereditarily connected-domishold} ({\em hereditarily CD} for short) if every induced subgraph of $G$ is connected-domishold.
\end{definition}
\end{sloppypar}

In order to state the technical lemma to be used in the proof of Theorem~\ref{thm:characterizations}, we need the following notation. A {\it diamond} is a graph obtained from $K_4$ by deleting an edge. Given a diamond $D$, we will refer to its vertices of degree two as its {\em tips} and denote them as $t$ and $t'$, and to its vertices of degree three as its {\em centers} and denote them as $c$ and $c'$. The respective vertex sets will be denoted by $T$ and $C$. Similar notation will be used for diamonds denoted by $D_1$ or $D_2$.

\begin{lemma}[Diamond Lemma]\label{diamondLemma}
Let $G$ be a connected chordal graph. Suppose that $G$ contains two induced diamonds $D_1=(V_1,E_1)$ and $D_2=(V_2,E_2)$ such that:
\begin{enumerate}[(i)]
  \item $C_1\cap C_2=\emptyset$.\label{ppt1}
  \item If no vertex in $C_1$ is adjacent to a vertex in $C_2$, then every minimal $C_1,C_2$-separator in $G$ is of size one.\label{ppt4}
  \item For each $j\in\{1,2\}$ the tips (i.e., $t_j,t_j'$) of $D_j$ belong to different components of $G-C_j$.\label{ppt2}
  \item For $j\in\{1,2\}$ every component of $G-C_j$ has a vertex that dominates $C_j$.\label{ppt3}
\end{enumerate}
Then $G$ has an induced subgraph isomorphic to $F_1,F_2,$ or $H_i$ for some $i\geq 1$, where the graphs $F_1$, $F_2$, and a general member of the family $\{H_i\}$ are depicted in Fig.~\ref{fig:forbidden-induced-subgraphs}.
\end{lemma}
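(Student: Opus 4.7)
The plan is to split the argument into two main cases according to whether there are adjacencies between $C_1$ and $C_2$, and in each case to use chordality together with the dominating-vertex structure guaranteed by condition~(iv) to either pinpoint an induced $F_1$ or $F_2$ in the ``close'' regime, or to exhibit an induced $H_i$ in the ``far'' regime.

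\emph{Case 1: there is an edge between $C_1$ and $C_2$.} Here I would carry out a finite but intricate analysis of how the (at most) eight vertices of $D_1\cup D_2$ interact. The relevant parameters are: how many edges run between $C_1$ and $C_2$; whether any tip of $D_j$ coincides with, or is adjacent to, a center or tip of $D_{3-j}$; and what the dominating vertices promised by (iv) look like in each component of $G-C_j$. Whenever two non-adjacent vertices of $D_1$ form a putative $4$-cycle with two non-adjacent vertices of $D_2$, chordality forces a chord, and this chord must respect condition~(iii) (the two tips of $D_j$ lie in distinct components of $G-C_j$, so many candidate chords are prohibited). Working through the finitely many resulting configurations, each surviving subcase should yield an induced $F_1$ or $F_2$.

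\emph{Case 2: there is no edge between $C_1$ and $C_2$.} By hypothesis (ii) every minimal $C_1,C_2$-separator in $G$ is a singleton; fix one, say $\{s\}$, and let $P$ be a shortest $C_1$-to-$C_2$ path in $G$ passing through $s$. By (iii), $P$ starts at a center of $D_1$ (say $c_1$) and ends at a center of $D_2$ (say $c_2$), and the minimality of $P$ together with the absence of $C_1,C_2$-edges forces $P$ to be induced and internally disjoint from $C_1\cup C_2$. Using condition (iv) on the component of $G-C_1$ containing $P\setminus\{c_1\}$, and on the component of $G-C_2$ containing $P\setminus\{c_2\}$, I would obtain vertices $u_1,u_2$ dominating $C_1$, $C_2$ respectively; a short argument (rerouting $P$ through $u_j$, or showing the neighbor of $c_j$ on $P$ can be taken to be $u_j$) then glues the two diamonds onto the path. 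The resulting induced subgraph is an $H_i$ whose index is determined by the length of $P$; chordality prevents any long chord from reappearing.

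\emph{Main obstacle.} The Case 2 argument is conceptually clean once one verifies the rerouting step for the dominating vertices $u_1,u_2$ and rules out short chords on $P$ by chordality. The real work is Case 1: the bookkeeping of coinciding and newly-forced-adjacent vertices, together with chordality-driven chord creation, generates many subcases, and extracting a clean copy of $F_1$ or $F_2$ from each configuration (while accounting for possibly many common vertices among the two diamonds and the dominating vertices provided by~(iv)) is the delicate part of the proof.
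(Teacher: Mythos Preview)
Your high-level split is close to the paper's, but two of your ``short'' steps in Case~2 are in fact the heart of the argument, and as stated they do not go through.

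\textbf{The rerouting step.} You take a single shortest $C_1$--$C_2$ path $P$ and then hope to reroute its first (resp.\ last) edge through a vertex $u_1$ (resp.\ $u_2$) dominating $C_1$ (resp.\ $C_2$). There is no reason this works: the vertex $u_1$ guaranteed by (iv) need not lie on or near $P$, and replacing the neighbor of $c_1$ on $P$ by $u_1$ generally destroys inducedness or shortness. The paper does something genuinely different here: it starts from the dominating vertices, building \emph{two} paths --- a shortest path $P^{-1}$ from $u^{-1}$ (the vertex dominating $C_1$ in the component of $G-C_1$ containing $D_2$, chosen closest to $C_2$) to $C_2$, and symmetrically $P^{-2}$ from $u^{-2}$ to $C_1$ --- and then uses hypothesis~(ii) precisely to prove that $P^{-1}$ and $P^{-2}$ must intersect. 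Only then is a single path $Q$ from $u^{-1}$ to $u^{-2}$ assembled from pieces of both. Your use of~(ii) (to get a singleton separator $\{s\}$ and route $P$ through it) does not substitute for this; the point of~(ii) is to force the two dominating-vertex paths to meet.

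\textbf{The chord-elimination step.} Once you have diamonds $\{t_1',c_1,c_1',u_1\}$ and $\{t_2',c_2,c_2',u_2\}$ joined by an induced path $Q$, you still need to kill edges from $c_1',c_2'$ (and the chosen centers $c_1,c_2$) to internal vertices of $Q$. Chordality does not forbid such edges; it only forbids long induced cycles, and an edge from $c_1'$ to an interior vertex of $Q$ creates no cycle at all. The paper handles this by introducing a ``weakly induced $H_n$'' (allowing exactly these extra edges), choosing one with $n$ minimum, and then showing by a separate case analysis that any such extra edge either contradicts minimality of $n$ or produces an induced $F_1$ or $F_2$. This is not a one-line consequence of chordality.

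A smaller point: your Case~1 can also produce $H_1$ (not only $F_1,F_2$), and the ``finite'' analysis there already involves arbitrarily long paths to the dominating vertices of~(iv), exactly as in Case~2; the paper's Claim~11 (ruling out $c_1\sim c_2$) is one of the longer arguments in the proof.
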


\begin{figure}[!ht]
  \begin{center}
\includegraphics[width=0.8\linewidth]{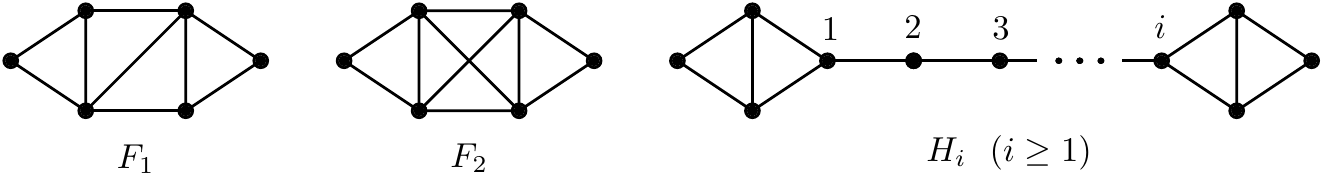}
  \end{center}
  \caption{Graphs $F_1$, $F_2$, and $H_i$.}
\label{fig:forbidden-induced-subgraphs}
\end{figure}

The proof of Lemma~\ref{diamondLemma} is postponed to Section~\ref{sec:proof}.

\begin{theorem}\label{thm:characterizations}
For every graph $G$, the following are equivalent:
\begin{enumerate}
  \item\label{item1} $G$ is hereditarily connected-domishold.
  \item\label{item3} The cutset hypergraph of every induced subgraph of $G$ is $1$-Sperner.
  \item\label{item5}  The cutset hypergraph of every induced subgraph of $G$ is threshold.
  \item\label{item2}  The cutset hypergraph of every induced subgraph of $G$ is $2$-asummable.
  \item\label{item4}  $G$ is an $\{F_1,F_2,H_1,H_2,\ldots\}$-free chordal graph.
\end{enumerate}
\end{theorem}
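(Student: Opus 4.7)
The plan is to prove the cycle of implications $(2) \Rightarrow (3) \Rightarrow (1) \Rightarrow (4) \Rightarrow (5) \Rightarrow (2)$. Three of these steps are immediate once applied induced-subgraph-wise: $(2) \Rightarrow (3)$ by Theorem~\ref{lem:1-Sperner-threshold}, $(3) \Rightarrow (1)$ by the equivalence of items~1 and~2 in Proposition~\ref{prop:c-dom-graphs}, and $(1) \Rightarrow (4)$ by item~2 of Corollary~\ref{prop:c-dom-graphs-ms-2-asummable}. The genuine content of the theorem therefore lies in the two remaining steps $(4) \Rightarrow (5)$ and $(5) \Rightarrow (2)$.

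For $(4) \Rightarrow (5)$ I argue the contrapositive. If $G$ fails $(5)$, then $G$ contains, as an induced subgraph, a graph $H$ in the list $\{C_4, C_5, C_6, \ldots, F_1, F_2, H_1, H_2, \ldots\}$, so it suffices to exhibit for each such $H$ an explicit 2-summability certificate for $\MCH(H)$. For a cycle $C_k$ with $k \ge 4$, the minimal cutsets are exactly the pairs of non-adjacent vertices, and labelling the cycle cyclically the four sets $A_1 = \{v_1, v_3\}$, $A_2 = \{v_2, v_4\}$, $B_1 = \{v_1, v_2\}$, $B_2 = \{v_3, v_4\}$ match coordinate-wise; the sets $A_1, A_2$ are minimal cutsets, while $B_1, B_2$ are edges and therefore contain no minimal cutset. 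Analogous explicit certificates for $F_1$, $F_2$, and each $H_i$ are obtained by direct inspection of the minimal cutset structure drawn in Fig.~\ref{fig:forbidden-induced-subgraphs}; this part is routine but bookkeeping-heavy.

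For $(5) \Rightarrow (2)$, I argue by contradiction. Assume $G$ is chordal and $\{F_1, F_2, H_1, H_2, \ldots\}$-free but $\MCH(G')$ is not 1-Sperner for some induced subgraph $G'$, which inherits both properties. Since minimal cutsets form an antichain, $\MCH(G')$ is automatically Sperner, so failure of 1-Spernerness yields distinct minimal cutsets $S_1, S_2$ of $G'$ with $|S_1 \setminus S_2| \ge 2$ and $|S_2 \setminus S_1| \ge 2$. Pick $c_j, c_j' \in S_j \setminus S_{3-j}$ and set $C_j = \{c_j, c_j'\}$; since minimal vertex separators in chordal graphs are cliques, the edge $c_j c_j'$ belongs to $G'$. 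By Lemma~\ref{lem:vertices-dominating-cutsets-in-chordal-graphs}, each of two distinct components of $G' - S_j$ contains a vertex adjacent to every vertex of $S_j$; choosing such vertices $t_j, t_j'$ yields an induced diamond $D_j$ on $\{t_j, c_j, c_j', t_j'\}$ with centers $C_j$. The plan is then to verify the four hypotheses of the Diamond Lemma for $D_1, D_2$ in a suitably chosen induced subgraph of $G'$ and then invoke the lemma to produce an induced copy of $F_1$, $F_2$, or some $H_i$, contradicting~$(5)$.

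The main obstacle is condition~(iii) of the Diamond Lemma: the tips $t_j, t_j'$ must be separated in $G - C_j$, not merely in $G - S_j$, and when $|S_j| > 2$ the vertices of $S_j \setminus C_j$ may reconnect them. The plan to overcome this is to pass to the induced subgraph $G''$ obtained from $G'$ by deleting $(S_1 \cup S_2) \setminus (C_1 \cup C_2)$ (and shrinking further if necessary so that each $C_j$ becomes a minimal cutset of $G''$), observing that chordality and $\{F_1, F_2, H_i\}$-freeness persist, the edges $c_j c_j'$ are preserved, and a fresh application of Lemma~\ref{lem:vertices-dominating-cutsets-in-chordal-graphs} inside $G''$ supplies new tip vertices giving induced diamonds there. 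Condition~(ii), concerning minimal $C_1, C_2$-separators when there is no edge between $C_1$ and $C_2$, is handled using the clique structure of minimal vertex separators in $G''$ together with the observation that a $C_1, C_2$-separator of size at least $2$ would contradict the choice of $S_1$ or $S_2$ as minimal cutsets. Keeping all of the diamond data consistent through this reduction is the technical crux of the argument.
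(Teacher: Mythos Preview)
Your overall architecture matches the paper's: the same cycle of implications, the same explicit $2$-summability certificates for the forbidden graphs, and the same reduction of $(5)\Rightarrow(2)$ to the Diamond Lemma. The substantive difference is in how you set up the hypotheses of the Diamond Lemma, and there your sketch has a gap.

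The paper works with a \emph{minimal counterexample} $G$. Minimality immediately forces $G$ connected, $S\cap S'=\emptyset$ (else delete the intersection), and $|S|=|S'|=2$ (else delete a third vertex of $S$; since $S$ is a clique it stays in one component of $G-S'$, so $S'$ remains a minimal cutset). Thus $C_j=S_j$ from the outset and conditions (i), (iii), (iv) of the Diamond Lemma drop out of Lemma~\ref{lem:vertices-dominating-cutsets-in-chordal-graphs}. Condition~(ii) is then also a minimality argument: if some minimal $C_1,C_2$-separator $T$ has $|T|\ge 2$, one checks that $T$ is a clique contained in the component $X$ of $G-S$ that contains $S'$, and then $T$ and $S'$ are two minimal cutsets of the strictly smaller graph $G-V(Y)$ (for another component $Y$) with $\min\{|T\setminus S'|,|S'\setminus T|\}\ge 2$, contradicting minimality.

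Your alternative---deleting $(S_1\cup S_2)\setminus(C_1\cup C_2)$ and ``shrinking further if necessary''---is morally the same reduction, but your justification of condition~(ii) is not correct as written. A minimal $C_1,C_2$-separator $T$ of size $\ge 2$ in $G''$ does \emph{not} contradict the minimality of $S_1$ or $S_2$ as cutsets of $G'$; there is no direct link between those two statements. What actually works is exactly the paper's move: such a $T$ together with $C_2$ would be a new violating pair in a proper induced subgraph, so you must iterate your reduction---which is precisely the minimal-counterexample argument in disguise. You should either switch to the minimal-counterexample framing (cleaner) or make the iteration explicit and check that it terminates with all four Diamond Lemma hypotheses intact. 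Similarly, the phrase ``shrinking further if necessary so that each $C_j$ becomes a minimal cutset'' needs to be made precise: you must verify that after deletion $C_j$ is still a cutset (this uses that $S_{3-j}\setminus C_{3-j}$, being a subset of a clique, lies in a single component of $G'-S_j$ and cannot kill all but one component), and that $G''$ is connected.
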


\begin{proof}
The equivalence between items $\ref{item1}$ and $\ref{item5}$ follows from Proposition~\ref{prop:c-dom-graphs}.

The implications $\ref{item3}\Rightarrow \ref{item1}\Rightarrow \ref{item2}$
follow from Corollary~\ref{prop:c-dom-graphs-ms-2-asummable}.

\begin{sloppypar}
For the implication $\ref{item2}\Rightarrow \ref{item4}$, we only need to verify that the cutset hypergraph of none of the graphs in the set
\hbox{$\mathcal{F} :=\{C_k\,:\,k\ge 4\}\cup \{F_1,F_2\}\cup \{H_i\,:\,i\ge 1\}$} is $2$-asummable. Let $F\in \mathcal{F}$. Suppose first that $F$ is a cycle $C_k$ for some $k\ge 4$, let $u_1, u_2, u_3, u_4$ be four consecutive vertices on the cycle.
Let $A_1 = \{u_1,u_3\}$, $A_2 = \{u_2,u_4\}$, $B_1 = \{u_1,u_2\}$ and $B_2 = \{u_3,u_4\}$.
Then, $A_1$ and $A_2$ are minimal cutsets of $F$ and thus hyperedges of the hypergraph $\MCH(F)$, while $B_1$ and $B_2$ do not contain any minimal cutset of $F$ and are consequently independent sets in the hypergraph $\MCH(F)$. Since the sets $A_1,A_2,B_1$ and $B_2$ satisfy condition~(\ref{eq:k-summable hypergraph}), this implies that the hypergraph $\MCH(F)$ is $2$-summable.
If $F \in \{F_1, F_2\}\cup\{H_i\,:\,i\ge 1\}$, then let $a$ and $b$ be the two vertices of degree~$2$ in $F$,
let $N(a) = \{a_1,a_2\}$, $N(b) = \{b_1,b_2\}$, let $A_1 = N(a)$, $A_2 = N(b)$, $B_1 =\{a_1,b_1\}$ and $B_2 = \{a_2,b_2\}$.
The rest of the proof is the same as above.
\end{sloppypar}

It remains to show the implication $\ref{item4}\Rightarrow \ref{item3}$. Suppose for a contradiction that the implication fails and let $G$ be a minimal counterexample. That is, $G$ is an $\{F_1,F_2, H_1,H_2,\ldots\}$-free chordal graph such that its cutset hypergraph is not $1$-Sperner, but the
cutset hypergraph of every $\{F_1,F_2, H_1,H_2,\ldots\}$-free chordal graph with fewer vertices than $G$ is $1$-Sperner.
Since $\MCH(G)$ is not $1$-Sperner, $G$ has two minimal cutsets, say $S$ and $S'$, such that $\min\{|S\setminus S'|,|S'\setminus S|\} \geq 2$. The minimality of $G$ implies that the empty set is not a minimal cutset, hence $G$ is connected. Furthermore, the minimality ensures that $S$ and $S'$ are disjoint sets (otherwise one can remove $S \cap S'$ from $G$ and have a smaller counterexample). Thus, $\min\{|S|,|S'|\} \geq 2$. The minimality also ensures that $|S| = |S'| = 2$. Indeed, removing a third vertex $z$, if present, from $S$ does not affect the minimal cutset status of $S$. Since every minimal cutset in a chordal graph is a clique~\cite{MR0130190}, removing a third vertex $z$, if present, from $S$ will also not affect the minimal cutset status of $S'$ since the entire $S$ (which is a clique) is present in one component of $G-S'$.

The minimality also ensures that if there are no edges between $S$ and $S'$, then every minimal $S,S'$-separator $T$ is of size one. Indeed, if this is not the case, then $|T|\ge 2$ since $G$ is connected. Let $X$ be a component of $G-S$ containing $S'$ and let $Y$ be any other component of $G-S$. The fact that $T$ separates $S$ from $S'$ implies that $T$ contains all vertices in $N(S)\cap V(X)$, which is a non-empty set due to the minimality of $S$. Since $T$ is a minimal cutset in a chordal graph, it is a clique; in particular, it is fully contained in $X$. However, this implies that the sets $S'$ and $T$ are minimal cutsets in the graph $G-V(Y)$ such that $\min\{|S'\setminus T|,|T\setminus S'|\} \geq 2$, contrary to the minimality of $G$.

Let $X$, $Y$ be two distinct components of $G-S$ and $X'$, $Y'$ two distinct components of $G-S'$. By Lemma~\ref{lem:vertices-dominating-cutsets-in-chordal-graphs}, there exist vertices $x\in X$ and $y\in Y$ such that each of $x$ and $y$ dominates $S$ and $x' \in X'$ and $y' \in Y'$ such that each of $x'$ and $y'$ dominates $S'$. Let $D_1$ be the subgraph of $G$ induced by $S\cup \{x,y\}$ and let $D_2$ be the subgraph of $G$ induced by $S'\cup \{x',y'\}$. The definitions of $D_1$ and $D_2$ and Lemma~\ref{lem:vertices-dominating-cutsets-in-chordal-graphs} imply that $D_1$ and $D_2$ are two induced diamonds in $G$ satisfying the hypotheses of the Diamond Lemma (Lemma~\ref{diamondLemma}). Consequently, $G$ has an induced subgraph isomorphic to $F_1,F_2,$ or $H_i$ for some $i\geq 1$, a contradiction. This completes the proof of the theorem.
\end{proof}

In the rest of this section, we examine some of the consequences of the forbidden induced subgraph characterization of hereditarily CD graphs given by Theorem~\ref{thm:characterizations}.  The \emph{kite} (also known as the \emph{co-fork} or the \emph{co-chair}) is the graph depicted in Fig.~\ref{fig:kite}.

\begin{figure}[!ht]
  \begin{center}
\includegraphics[width=0.17\linewidth]{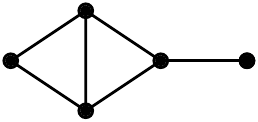}
  \end{center}
  \caption{The kite.}\label{fig:kite}
\end{figure}

The equivalence between items $\ref{item1}$ and $\ref{item4} $ in Theorem~\ref{thm:characterizations} implies that the class of hereditarily CD graphs is a proper generalization of the class of kite-free chordal graphs.


\begin{corollary}\label{cor:kite}
Every kite-free chordal graph is hereditarily CD.
\end{corollary}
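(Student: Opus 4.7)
The plan is to invoke the equivalence of items~\ref{item1} and~\ref{item4} in Theorem~\ref{thm:characterizations}: a graph $G$ is hereditarily CD if and only if $G$ is chordal and contains no induced copy of $F_1$, $F_2$, or $H_i$ for any $i\geq 1$. Since our $G$ is already assumed to be chordal, it suffices to show that every one of $F_1$, $F_2$, and $H_1, H_2, \ldots$ contains an induced kite; then any kite-free chordal graph automatically satisfies item~\ref{item4}, which forces hereditarily CD.

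Recall that the kite (as depicted in Fig.~\ref{fig:kite}) is obtained from a diamond by attaching a pendant vertex to one of its two centers (i.e., one of the degree-three vertices). Each of the forbidden graphs $F_1$, $F_2$, $H_i$ in Fig.~\ref{fig:forbidden-induced-subgraphs} is built from two induced diamonds $D_1$, $D_2$---this is precisely the structure produced by the Diamond Lemma. To exhibit an induced kite in any such graph, I would pick one of the two diamonds, say $D_1$ with centers $c_1, c_1'$ and tips $t_1, t_1'$, and adjoin a fifth vertex $u$ that is adjacent to exactly one of $c_1, c_1'$ and non-adjacent to the remaining three vertices of $D_1$. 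Such a $u$ can always be located among the vertices linking $D_1$ to $D_2$ in the forbidden graph; by inspection of Fig.~\ref{fig:forbidden-induced-subgraphs}, the neighbor of $c_1$ on the ``bridge'' to $D_2$ fits this description.

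For the two fixed graphs $F_1$ and $F_2$, the argument is reduced to a direct inspection: name the five relevant vertices and check that the induced subgraph is the kite. The main (though minor) obstacle is the infinite family $\{H_i\}_{i\ge 1}$, where a uniform choice of $u$ must be given and the required non-edges verified once and for all; this is straightforward because the vertices connecting $D_1$ to $D_2$ in $H_i$ form a path-like structure, so the chosen neighbor of a center $c_1$ is automatically non-adjacent to the other center and to both tips of $D_1$. Once kite-containment is verified for all graphs in $\{F_1, F_2\}\cup\{H_i : i\geq 1\}$, the corollary follows from Theorem~\ref{thm:characterizations}.
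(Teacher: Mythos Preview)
Your overall strategy is exactly the paper's (implicit) one: invoke the equivalence of items~\ref{item1} and~\ref{item4} in Theorem~\ref{thm:characterizations} and check that each of $F_1$, $F_2$, and $H_i$ contains an induced kite. However, your description of the kite is wrong. The kite (co-fork, co-chair) is a diamond with a pendant attached to one of its \emph{tips}, not to a center; its degree sequence is $(3,3,3,2,1)$. The graph you describe---a diamond with a pendant at a center, degree sequence $(4,3,2,2,1)$---is the \emph{dart}, a different $5$-vertex graph.

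This misidentification invalidates your construction. In each $H_i$ the path linking the two diamonds attaches to each diamond at a \emph{tip} (the ``root''), not at a center; no vertex outside $D_1$ is adjacent to either center of $D_1$, so the vertex $u$ you propose simply does not exist. A direct check shows that neither $F_1$ nor $F_2$ contains an induced dart either. The repair is immediate once you use the correct kite: in $H_i$ with $i\ge 2$, take the diamond $\{x_1,x_2,x_3,y_1\}$ (tips $x_1,y_1$) together with $u=y_2$, which is adjacent only to the tip $y_1$; in $H_1$ take $u=z_2$; in $F_1$ and $F_2$ an equally short inspection yields a diamond plus a pendant at a tip. With this correction your argument goes through and coincides with the paper's.
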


Corollary~\ref{cor:kite} further implies that the class of hereditarily CD graphs generalizes two well known classes of chordal graphs,
the class of block graphs and the class of trivially perfect graphs. A graph is said to be a \emph{block graph} if every block (maximal connected subgraph without cut vertices) of it is complete. The block graphs are well known to coincide with the diamond-free chordal graphs. A graph $G$ is said to be \emph{trivially perfect}~\cite{MR522739} if for every induced subgraph $H$ of $G$, it holds $\alpha(H) = |\mathcal{K}(H)|$, where
$\alpha(H)$ denotes the \emph{independence number} of $H$ (that is, the maximum size of an independent set in $H$) and $\mathcal{K}(H)$
denotes the set of all maximal cliques of $H$. Trivially perfect graphs coincide with the so-called \emph{quasi-threshold graphs}~\cite{MR1402343}, and are exactly the $\{P_4, C_4\}$-free graphs~\cite{MR522739}.

\begin{corollary}
Every block graph is hereditarily CD. Every trivially perfect graph is hereditarily~CD.
\end{corollary}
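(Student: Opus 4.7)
The plan is to deduce both statements as immediate consequences of Corollary~\ref{cor:kite}, since it suffices to verify that block graphs and trivially perfect graphs are both contained in the class of kite-free chordal graphs. The key structural observation is that the kite, depicted in Fig.~\ref{fig:kite}, is a diamond with a pendant vertex attached to one of its tips. In particular, the kite contains an induced diamond; moreover, letting $p$ denote the pendant, $t$ the tip to which $p$ is attached, $c$ a center of the diamond, and $t'$ the other tip, the four vertices $p,t,c,t'$ induce a $P_4$ in the kite (with edges $pt$, $tc$, $ct'$ and no chord).

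For block graphs, I will invoke the well-known characterization, recalled in the paragraph preceding the corollary, that they coincide with the diamond-free chordal graphs. Since the kite contains an induced diamond, every diamond-free graph is kite-free; therefore every block graph is a kite-free chordal graph, and Corollary~\ref{cor:kite} yields the first statement. For trivially perfect graphs, I will use the characterization, also recalled above, that they are precisely the $\{P_4,C_4\}$-free graphs. Chordality is immediate, since any induced cycle $C_k$ with $k \geq 5$ contains four consecutive vertices inducing a $P_4$, so $\{P_4,C_4\}$-freeness implies $\{C_k : k \geq 4\}$-freeness. Kite-freeness follows at once from the second part of the structural observation above, because every $P_4$-free graph is kite-free. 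A second application of Corollary~\ref{cor:kite} completes the proof.

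The argument contains no genuine obstacle: once one verifies that the kite contains both an induced diamond and an induced $P_4$, the two statements reduce to direct invocations of Corollary~\ref{cor:kite}, together with the structural characterizations of block graphs and trivially perfect graphs recalled in the excerpt. No auxiliary lemma is needed.
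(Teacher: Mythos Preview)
Your proof is correct and follows essentially the same approach as the paper, which derives the corollary directly from Corollary~\ref{cor:kite} together with the standard characterizations of block graphs (as diamond-free chordal graphs) and trivially perfect graphs (as $\{P_4,C_4\}$-free graphs). Your explicit verification that the kite contains both an induced diamond and an induced $P_4$ fills in precisely the details the paper leaves implicit.
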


Another class of graphs contained in the class of hereditarily CD graphs is the class of graphs defined similarly as the
hereditarily CD graphs but with respect to total dominating sets. These so-called \emph{hereditarily total domishold graphs}
(abbreviated \emph{hereditarily TD graphs}) were studied in~\cite{MR3281177}, where characterizations analogous to those given by Theorem~\ref{thm:characterizations} were obtained, including the following characterization in terms of forbidden induced subgraphs.

\begin{theorem}[Chiarelli and Milani\v c~\cite{MR3281177}]\label{thm:HTD}
For every graph $G$, the following are equivalent:
\begin{enumerate}
  \item $G$ is hereditarily total domishold.
  \item No induced subgraph of $G$ is isomorphic to a graph in Fig.~\ref{fig:forbidden-induced-subgraphs-for-HTD}.
\end{enumerate}
\end{theorem}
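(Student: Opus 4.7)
The plan is to mirror the structure of the proof of Theorem~\ref{thm:characterizations}, but with the neighborhood hypergraph $\MNH(G)$ in place of the cutset hypergraph $\MCH(G)$. The key leverage comes from Proposition~\ref{prop:t-dom-graphs}, which states that $G$ is total domishold if and only if $\MNH(G)$ is threshold. Combined with Theorem~\ref{lem:1-Sperner-threshold} (every $1$-Sperner hypergraph is threshold) and Corollary~\ref{cor:Threshold hyp by asummability} (threshold $\Leftrightarrow$ asummable, hence threshold $\Rightarrow$ $2$-asummable), a natural intermediate chain of successively weaker conditions is: $\MNH(G')$ is $1$-Sperner (equivalently dually Sperner, since $\MNH(G')$ is Sperner by construction) $\Rightarrow$ threshold $\Rightarrow$ $2$-asummable, for every induced subgraph $G'$ of~$G$. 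The strategy is to sandwich the hereditary TD condition and the forbidden induced subgraph condition between the $1$-Sperner and the $2$-asummable ends of this chain, and show that all four conditions coincide.

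For the direction (1)~$\Rightarrow$~(2), I would verify one by one that each graph $F$ in Fig.~\ref{fig:forbidden-induced-subgraphs-for-HTD} fails the TD property. For each such $F$, I would exhibit a $2$-summable certificate in $\MNH(F)$: concrete sets $A_1, A_2 \subseteq V(F)$ each containing some minimal neighborhood and $B_1, B_2$ each containing none, with the incidence-balance condition $|\{i : v \in A_i\}| = |\{i : v \in B_i\}|$ at every vertex $v$. Corollary~\ref{cor:Threshold hyp by asummability} and Proposition~\ref{prop:t-dom-graphs} then rule out the TD-ness of $F$, and hence of any $G$ having $F$ as an induced subgraph.

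For the direction (2)~$\Rightarrow$~(1), the plan is to show that if $G$ contains no graph from the forbidden list as an induced subgraph, then $\MNH(G')$ is $1$-Sperner for every induced subgraph $G'$ of $G$, so that Theorem~\ref{lem:1-Sperner-threshold} and Proposition~\ref{prop:t-dom-graphs} yield hereditary TD-ness. Arguing by contradiction on a vertex-minimum counterexample $G'$, there exist two vertices $u, v$ whose neighborhoods $N(u), N(v)$ are both minimal in $G'$ and satisfy $\min\{|N(u) \setminus N(v)|, |N(v) \setminus N(u)|\} \geq 2$. Choosing $a_1, a_2 \in N(u) \setminus N(v)$ and $b_1, b_2 \in N(v) \setminus N(u)$ produces a fixed six-vertex skeleton, and a case analysis over the remaining adjacencies (whether $u \sim v$, which pairs among $\{a_1, a_2, b_1, b_2\}$ are adjacent, and which extra vertices must be present in $G'$ to witness minimality of $N(u)$ and $N(v)$ against any smaller neighborhood arising in this skeleton) should in each case realize one of the graphs of the forbidden list as an induced subgraph, contradicting the hypothesis.

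The main obstacle lies in the case analysis of this last step. Unlike the chordal setting of Theorem~\ref{thm:characterizations}, no structural assumption such as chordality is available here, so the forbidden family must be richer (it presumably includes long holes and several small obstructions capturing all the ways two minimal neighborhoods can violate the $1$-Sperner inequality), and one cannot invoke anything like Lemma~\ref{lem:vertices-dominating-cutsets-in-chordal-graphs}. The bookkeeping will have to use the minimality of the counterexample carefully, to ensure that the obstruction is realized \emph{as an induced subgraph} of $G'$ rather than merely as a subgraph, and to exclude the possibility that hypothetical ``spoiling'' vertices destroy the match with a forbidden pattern.
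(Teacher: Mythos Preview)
This theorem is not proved in the present paper: it is quoted as a result from~\cite{MR3281177} (note the attribution in the theorem header) and is only used here to derive Corollary~\ref{cor:inclusion}. Consequently there is no proof in this paper to compare your proposal against.

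That said, your plan is the natural one and is exactly the template the authors developed in~\cite{ChiMil13,MR3281177}: show the equivalence of hereditary TD, hereditary thresholdness of $\MNH$, hereditary $1$-Spernerness of $\MNH$, and hereditary $2$-asummability of $\MNH$, with the forbidden-subgraph list arising from the analysis of two minimal neighborhoods violating the $1$-Sperner condition. Your identification of the main difficulty---the case analysis on the six-vertex skeleton $\{u,v,a_1,a_2,b_1,b_2\}$ without chordality to lean on---is accurate; in the original source this is handled by a direct (and somewhat lengthy) case enumeration that produces precisely the graphs in Fig.~\ref{fig:forbidden-induced-subgraphs-for-HTD}. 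If you wish to carry this out, you will also need to be careful that the vertices $a_1,a_2,b_1,b_2$ may coincide with $u$ or $v$ (e.g., $u\in N(v)$), and that the minimality of the counterexample is used to control extraneous vertices.
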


\begin{figure}[!ht]
  \begin{center}
\includegraphics[width=\linewidth]{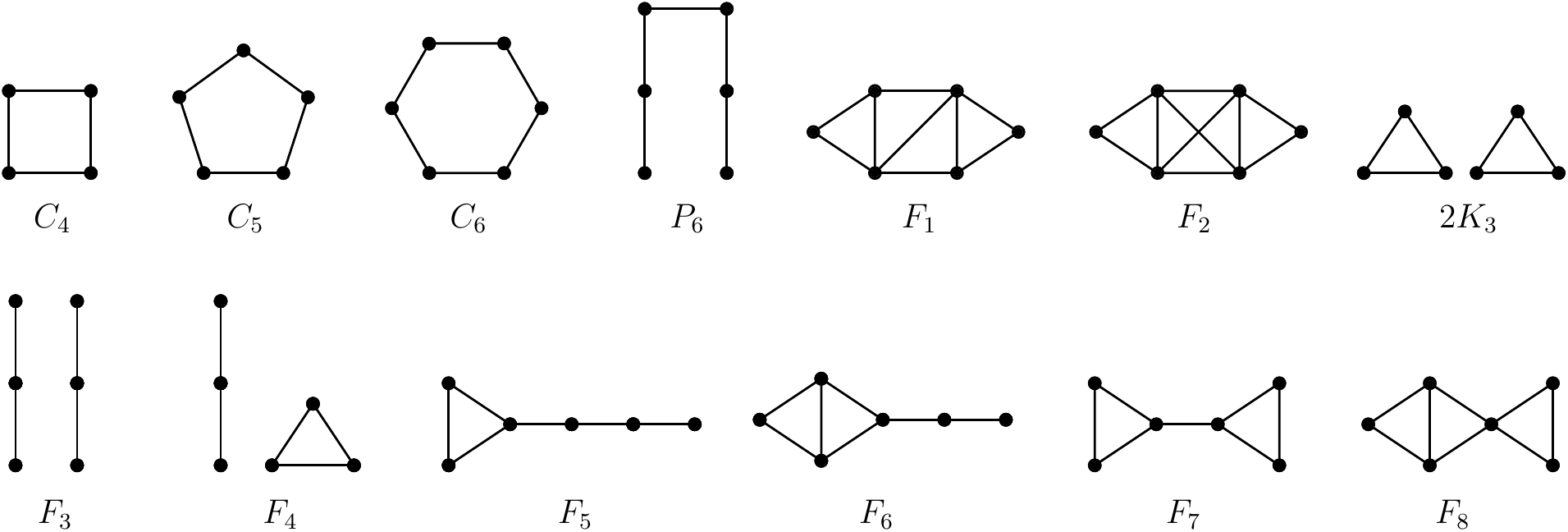}
  \end{center}
  \caption{The set of forbidden induced subgraphs for the class of hereditarily total domishold graphs.}
\label{fig:forbidden-induced-subgraphs-for-HTD}
\end{figure}

Theorems~\ref{thm:characterizations} and~\ref{thm:HTD} imply the following.

\begin{corollary}\label{cor:inclusion}
Every hereditarily TD graph is hereditarily CD.
\end{corollary}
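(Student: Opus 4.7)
The plan is to deduce the corollary from the two forbidden induced subgraph characterizations available in Theorems~\ref{thm:characterizations} and~\ref{thm:HTD}. Let $\mathcal{F}_{CD}=\{C_k:k\ge 4\}\cup\{F_1,F_2\}\cup\{H_i:i\ge 1\}$ denote the family from Theorem~\ref{thm:characterizations}, and let $\mathcal{F}_{TD}$ denote the family from Fig.~\ref{fig:forbidden-induced-subgraphs-for-HTD}. By the two theorems, it is enough to verify the inclusion at the level of forbidden families: every graph in $\mathcal{F}_{CD}$ must contain some graph from $\mathcal{F}_{TD}$ as an induced subgraph. Indeed, once this is established, any hereditarily TD graph $G$ is $\mathcal{F}_{TD}$-free and therefore $\mathcal{F}_{CD}$-free (a forbidden CD subgraph would drag along a forbidden TD subgraph), so $G$ is hereditarily CD.

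Concretely, I would argue by inspection, case-by-case over $\mathcal{F}_{CD}$. For the cycles $C_k$ with $k\ge 4$: each $C_k$ either belongs to $\mathcal{F}_{TD}$ itself (for the small values of $k$ that appear in Fig.~\ref{fig:forbidden-induced-subgraphs-for-HTD}) or contains, as an induced subgraph, a shorter cycle from $\mathcal{F}_{TD}$ or a small graph from $\mathcal{F}_{TD}$ supported on a short induced subpath. For the sporadic graphs $F_1$ and $F_2$: I would exhibit explicit vertex subsets inducing members of $\mathcal{F}_{TD}$, which boils down to a direct check using the pictures.

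The main obstacle will be the infinite family $\{H_i\}$. Here the plan is to locate, inside every $H_i$, one common substructure that already lies in $\mathcal{F}_{TD}$. The natural candidate is a structure built around the two degree-$2$ vertices of $H_i$ together with a short piece of the connecting path; one then verifies that a fixed number of consecutive vertices along this path, together with their diamond attachments, already induces a graph appearing in Fig.~\ref{fig:forbidden-induced-subgraphs-for-HTD}. If instead different $H_i$ need different witnesses, I would split the family according to the parameter $i$ (for instance small $i$ handled sporadically, and large $i$ handled uniformly by identifying a long induced path or cycle that is itself TD-forbidden).

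If the explicit case analysis becomes cumbersome, a backup plan is to invoke the hypergraph-level characterizations used to prove both theorems: every hereditarily TD graph has, by Proposition~\ref{prop:t-dom-graphs} and the characterization of~\cite{MR3281177}, all neighborhood hypergraphs $1$-Sperner on every induced subgraph, and then translate this property via Lemma~\ref{lem:split} and the arguments in Section~\ref{sec:main} into $1$-Spernerness of each cutset hypergraph, yielding the hereditarily CD property through Theorem~\ref{thm:characterizations}. I expect, however, that the direct forbidden subgraph verification is shorter, and that the only real work is the uniform treatment of $\{H_i\}_{i\ge 1}$.
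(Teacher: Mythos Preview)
Your approach is exactly the one the paper takes: verify that every graph in $\mathcal{F}_{CD}$ contains some member of $\mathcal{F}_{TD}$ as an induced subgraph. The paper dispatches the cases more quickly than you anticipate: $F_1$ and $F_2$ are themselves in $\mathcal{F}_{TD}$; a cycle $C_k$ with $k\ge 4$ is one of $C_4,C_5,C_6$ or contains an induced $P_6$; and every $H_i$ contains an induced $2K_3$ (the two triangles sitting inside the two diamonds, on the sides away from the connecting path), so the ``main obstacle'' is in fact a one-line observation and no splitting on $i$ or backup hypergraph argument is needed.
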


\begin{proof}
It suffices to verify that each of the forbidden induced subgraphs for the class of hereditarily connected-domishold graphs contains one of the graphs from Fig.~\ref{fig:forbidden-induced-subgraphs-for-HTD} as induced subgraph.
A cycle $C_k$ with $k\ge 4$ contains (or is equal to) one of $C_4, C_5, C_6, P_6$.
The graphs $F_1$ and $F_2$ are contained in both sets of forbidden induced subgraphs.
Finally, each graph of the form $H_i$ where $i\ge 1$ contains $2K_3$ as induced subgraph.
\end{proof}

Since a graph is split if and only if it is $\{2K_2,C_4,C_5\}$-free and each of the forbidden induced subgraphs for the class of hereditarily total domishold graphs other than $F_2$ contains either $2K_2$, $C_4$, or $C_5$ as induced subgraph, Corollary~\ref{cor:inclusion} implies the following.

\begin{corollary}\label{cor:split}
Every $F_2$-free split graph is hereditarily CD.
\end{corollary}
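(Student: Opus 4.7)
The plan is to deduce the corollary from Theorem~\ref{thm:HTD} together with Corollary~\ref{cor:inclusion}. Specifically, I will argue that every $F_2$-free split graph is in fact hereditarily total domishold; once this is established, Corollary~\ref{cor:inclusion} immediately gives that it is hereditarily CD.

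Fix an $F_2$-free split graph $G$. By Theorem~\ref{thm:HTD}, to show that $G$ is hereditarily total domishold, it suffices to verify that $G$ contains no induced subgraph isomorphic to any member $F$ of the list in Fig.~\ref{fig:forbidden-induced-subgraphs-for-HTD}. The graph $F_2$ is excluded by hypothesis. For each of the remaining forbidden induced subgraphs $F$, I would exhibit an induced copy of $2K_2$, $C_4$, or $C_5$ inside $F$ by inspection of the figure; since $G$ is split, it is $\{2K_2,C_4,C_5\}$-free (by the Foldes--Hammer characterization), and hence cannot contain $F$ as induced subgraph.

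With all forbidden induced subgraphs for the hereditarily TD class ruled out, Theorem~\ref{thm:HTD} gives that $G$ is hereditarily TD, and then Corollary~\ref{cor:inclusion} yields that $G$ is hereditarily CD. The only mildly tedious step is the case analysis checking that each graph in Fig.~\ref{fig:forbidden-induced-subgraphs-for-HTD} other than $F_2$ contains one of $2K_2$, $C_4$, $C_5$; but this is a routine visual inspection of the displayed figure rather than a substantive obstacle.
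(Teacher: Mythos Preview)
Your proposal is correct and follows essentially the same approach as the paper: both argue that every forbidden induced subgraph for the hereditarily TD class other than $F_2$ contains $2K_2$, $C_4$, or $C_5$, so an $F_2$-free split graph is hereditarily TD, and then apply Corollary~\ref{cor:inclusion}.
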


Fig.~\ref{fig:Hasse} shows a Hasse diagram depicting the inclusion relations among the class of hereditarily connected-domishold graphs and several other,  well studied graph classes. All definitions of graph classes depicted in Fig.~\ref{fig:Hasse} and the relations between them can be found in~\cite{graphclasses}, with the exception of hereditarily CD and hereditarily TD graphs. The fact that every co-domishold graph is hereditarily TD and that every hereditarily TD graph is $(1,2)$-polar chordal was proved in~\cite{MR3281177}. The remaining inclusion and non-inclusion relations can be easily verified using the forbidden induced subgraph characterizations of the depicted graph classes, see~\cite{MR1686154,graphclasses,MR2063679}.

\begin{figure}[!ht]
  \begin{center}
\includegraphics[width=0.8\linewidth]{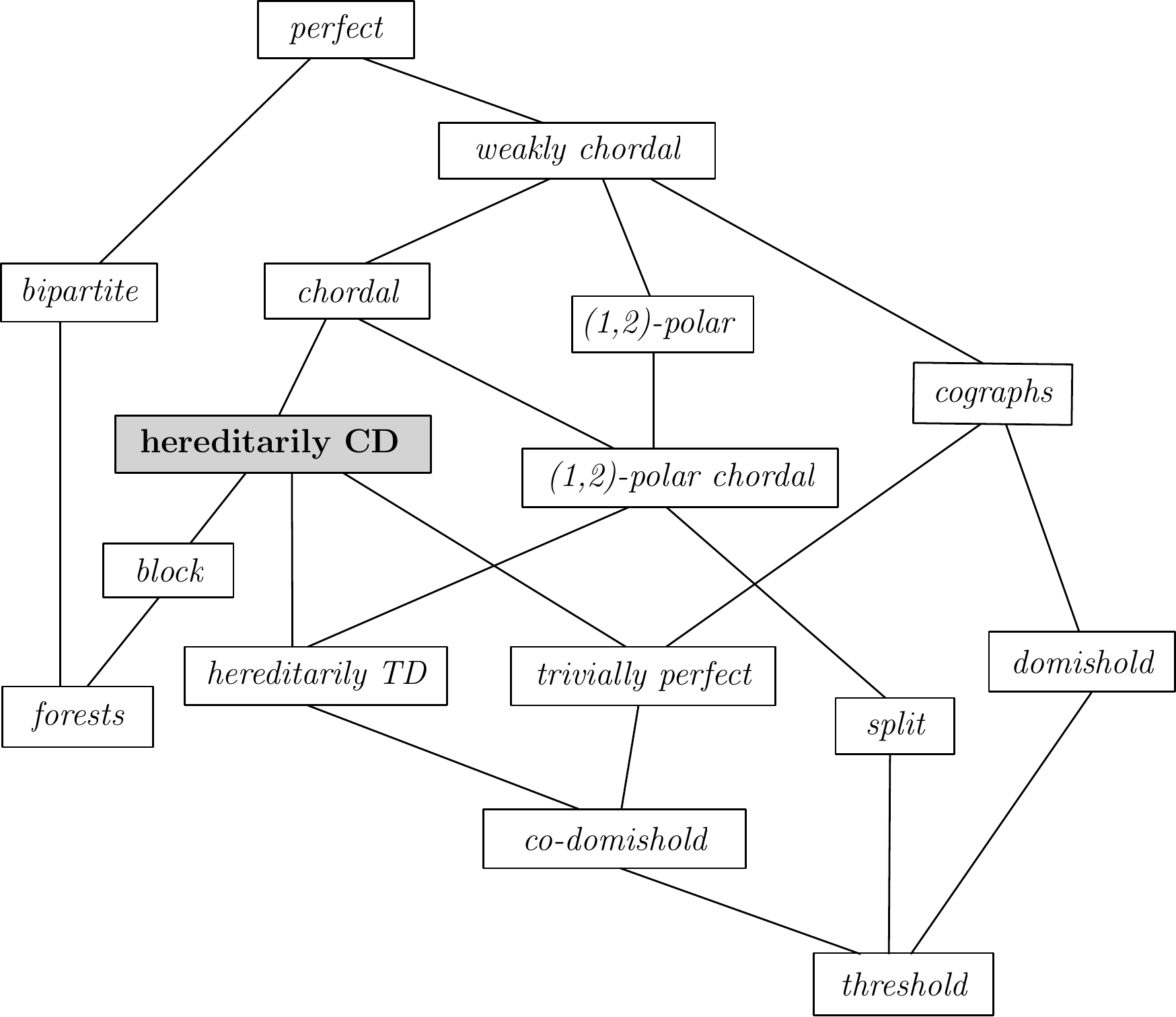}
  \end{center}
  \caption{A Hasse diagram depicting the inclusion relations within several families of perfect graphs, focused around the class of hereditarily connected-domishold graphs.}
\label{fig:Hasse}
\end{figure}

\section{Algorithmic aspects via vertex separators}\label{sec:algo}

In this section, we build on the above results, together with some known results from the literature on connected dominating sets and minimal vertex separators in graphs, to study certain algorithmic aspects of the class of connected-domishold graphs and its hereditary variant.

\subsection{The recognition problems}\label{sec:recognition}

We start with computational complexity aspects of the problems of recognizing whether a given graph
is CD, resp.~hereditarily CD. For general graphs, the computational complexity of recognizing connected-domishold graphs is not known.
However, we will now show that the hypergraph approach outlined in Section~\ref{sec:hypergraphs} leads to a sufficient condition
for the problem to be polynomially solvable, capturing a large number of graph classes. The condition is expressed using the notion of minimal vertex separators. Recall that a \emph{$u,v$-separator} (for a pair of non-adjacent vertices $u$, $v$) is a set $S\subseteq V(G)\setminus \{u,v\}$ such that $u$ and $v$ are in different components of $G-S$ and that a $u,v$-separator is minimal if it does not contain any other $u,v$-separator. Recall also that a \emph{minimal vertex separator} in $G$ is a minimal $u,v$-separator for some non-adjacent vertex pair $u$, $v$.

A sufficient condition for the polynomial time solvability of the recognition problem for CD graphs in a class of graphs $\mathcal{G}$ is that there exists a polynomial $p$ such that every connected graph \hbox{$G\in \mathcal{G}$} has at most $p(|V(G)|)$ minimal vertex separators. This is the case for chordal graphs, which have at most $|V(G)|$ minimal vertex separators~\cite{MR0408312}, as well as for many other classes of graphs, including permutation graphs, circle graphs, circular-arc graphs, chordal bipartite graphs, trapezoid graphs, cocomparability graphs of bounded dimension, distance-hereditary graphs, and weakly chordal graphs~(see, e.g.,~\cite{MR1857397,MR1284728,MonTod16}). For a polynomial $p$, let $\mathcal{G}_p$ be the class of graphs with at most $p(|V(G)|)$ minimal vertex separators. Since every minimal cutset is a minimal vertex separator, every connected graph $G\in \mathcal{G}_p$ has at most $p(|V(G)|)$ minimal cutsets.

It is known that the set of all minimal vertex separators of a given connected $n$-vertex graph can be enumerated in output-polynomial time. More precisely, Berry et al.~\cite{MR1792122} have developed an algorithm solving this problem in time $\mathcal{O}(n^3|\Sigma|)$ where $\Sigma$ is the set of all minimal vertex separators of $G$, improving on earlier (independently achieved) running times of $\mathcal{O}(n^5|\Sigma|)$ due to Shen and Liang~\cite{MR1453864} and Kloks and Kratsch~\cite{MR1612600}. Based on these results, we derive the following.

\begin{theorem}\label{thm:poly-cutsets}
For every polynomial $p$ there is a polynomial time algorithm to determine whether a given connected graph $G\in \mathcal{G}_p$ is connected-domishold. In case of a yes instance, the algorithm also computes an integral CD structure of $G$.
\end{theorem}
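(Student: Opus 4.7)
The plan is to combine Proposition~\ref{prop:c-dom-graphs}, which reduces connected-domisholdness of a non-complete graph $G$ to thresholdness of its cutset function $f_G^{\textit{cut}}$, with Theorem~\ref{thm:thr-Bf}, which tests thresholdness of a positive Boolean function given by its complete DNF in polynomial time. The complete DNF of $f_G^{\textit{cut}}$ is $\bigvee_{S\in \MCH(G)}\bigwedge_{u\in S}x_u$, with one prime implicant per minimal cutset of $G$ (a positive monomial $\bigwedge_{u\in T}x_u$ is an implicant iff $T$ contains some minimal cutset, and is prime iff $T$ itself is a minimal cutset). Hence the algorithmic task reduces to enumerating the minimal cutsets of $G$ efficiently and then invoking Theorem~\ref{thm:thr-Bf}.

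For the enumeration, I would first compute the set $\Sigma$ of all minimal vertex separators of $G$ using the algorithm of Berry et al.~\cite{MR1792122}, which runs in time $\mathcal{O}(n^3|\Sigma|)$; because $G\in\mathcal{G}_p$ we have $|\Sigma|\le p(n)$, and this step is polynomial. Next, I would filter $\Sigma$ down to $\MCH(G)$ by retaining exactly its inclusion-minimal elements. The correctness of this filter rests on the short observation that the minimal cutsets are precisely the inclusion-minimal minimal vertex separators: every minimal cutset is a minimal vertex separator (as noted in Section~\ref{sec:prelim}), and any strict subset of some $S\in\Sigma$ that were itself a cutset would contain a minimal cutset $S'\subsetneq S$ with $S'\in\Sigma$, contradicting inclusion-minimality of $S$ within $\Sigma$. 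This comparison runs in time $\mathcal{O}(|\Sigma|^2\cdot n)$.

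Having obtained $\MCH(G)$, I would form the complete DNF of $f_G^{\textit{cut}}$ (of size polynomial in $n$) and run the algorithm of Theorem~\ref{thm:thr-Bf}. If it rejects, the output is that $G$ is not connected-domishold; otherwise it returns an integral separating structure $(w,t)$ of $f_G^{\textit{cut}}$, and the ``moreover'' part of Proposition~\ref{prop:c-dom-graphs} yields the integral CD structure $(w,\,w(V)-t)$ of $G$. The special case in which $G$ is complete (so $\Sigma=\emptyset$ and $f_G^{\textit{cut}}\equiv 0$) is handled separately by returning the CD structure $w\equiv 1$, $t=1$ from Example~\ref{exa:CompleteGraph}. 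The main technical point is the equivalence between minimal cutsets and inclusion-minimal minimal vertex separators, which guarantees that the DNF fed into Theorem~\ref{thm:thr-Bf} is indeed the complete DNF of $f_G^{\textit{cut}}$; every other step is a direct invocation of a previously stated result or routine polynomial-time bookkeeping.
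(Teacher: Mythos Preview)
Your proposal is correct and follows essentially the same approach as the paper: handle the complete case directly, otherwise enumerate all minimal vertex separators via Berry et al., filter to the inclusion-minimal ones to obtain $\MCH(G)$, feed the resulting complete DNF of $f_G^{\textit{cut}}$ into the threshold-recognition algorithm of Theorem~\ref{thm:thr-Bf}, and convert the returned separating structure into a CD structure via Proposition~\ref{prop:c-dom-graphs}. The only difference is that you make explicit the (correct) justification that the minimal cutsets are exactly the inclusion-minimal members of $\Sigma$, a point the paper leaves implicit.
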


\begin{proof}
Let $G = (V,E)\in \mathcal{G}_p$ be a connected graph that is the input to the algorithm.

The algorithm proceeds as follows. If $G$ is complete, then $G$ is connected-domishold and an integral CD structure of $G$ is returned, say $(w,t)$ with $w(x) = 1$ for all $x\in V(G)$ and $t = 1$.
Assume now that $G$ is not complete. First, using the algorithm of Berry et al.~\cite{MR1792122}, we compute in time $\mathcal{O}(|V(G)|^3p(|V(G)|))$ the set $\Sigma$ of all minimal vertex separators of $G$. Next, the cutset hypergraph, $\MCH(G)$, is computed by comparing each pair of sets in $\Sigma$ and discarding the non-minimal ones. Since $\MCH(G)$ is Sperner, there is a bijective correspondence between the hyperedges of $\MCH(G)$ and the prime implicants of the cutset function $f_G^{\textit{cut}}$; this yields the complete DNF of $f_G^{\textit{cut}}$. Finally, we run the algorithm given by Theorem~\ref{thm:thr-Bf} on the complete DNF of $f_G^{\textit{cut}}$.
If $f_G^{\textit{cut}}$ is not threshold, then we conclude that $G$ is not connected-domishold.
Otherwise, the algorithm returned an integral separating structure, say $(w,t)$, of $f_G^{\textit{cut}}$. In this case we return
$(w,w(V)-t)$ as a CD structure of $G$.

It is clear that the algorithm runs in polynomial time. Its correctness follows from Proposition~\ref{prop:c-dom-graphs}.
\end{proof}

\begin{sloppypar}
Let $\tilde{\mathcal{G}}$ be the largest hereditary graph class such that a connected graph $G\in \tilde{\mathcal{G}}$ is connected-domishold if and only if it is total domishold. By Theorem~\ref{thm:split}, class $\tilde{\mathcal{G}}$ is a generalization of the class of split graphs. Since there is a polynomial time algorithm for recognizing total domishold graphs~\cite{ChiMil13,MR3281177}, there is a polynomial time algorithm to determine whether a given connected graph $G\in \tilde{\mathcal{G}}$ is connected-domishold. This motivates the following question (which we leave open).
\end{sloppypar}

\begin{question}
What is the largest hereditary graph class $\tilde{\mathcal{G}}$ such that a connected graph $G\in \tilde{\mathcal{G}}$ is connected-domishold if and only if it is total domishold?
\end{question}

A polynomial time recognition algorithm for the class of hereditarily CD graphs can be derived from the characterization of hereditarily CD graphs in terms of forbidden induced subgraphs given by Theorem~\ref{thm:characterizations}.

\begin{proposition}
There exists a polynomial time algorithm to determine whether a given graph $G$ is hereditarily CD.
In the case of a yes instance, an integral CD structure of $G$ can be computed in polynomial time.
\end{proposition}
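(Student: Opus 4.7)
The plan is to apply the forbidden induced subgraph characterization of hereditarily CD graphs supplied by the equivalence between items~\ref{item1} and~\ref{item4} of Theorem~\ref{thm:characterizations}, and then to invoke Theorem~\ref{thm:poly-cutsets} to produce an integral CD structure. The recognition algorithm proceeds in three tests. First, I would test chordality of $G$ in linear time via lexicographic BFS; if $G$ is not chordal, reject. Second, I would search for induced copies of $F_1$ or $F_2$ by brute-force enumeration over all vertex subsets of the corresponding (constant) size, which is polynomial; reject if any such copy is found.

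The more delicate third test is the detection of an induced copy of some $H_i$ with $i\ge 1$. Two features of this family make the task tractable. First, $|V(H_i)| = i+\mathcal{O}(1)$, so only the polynomially many values $i\le n$ are relevant. Second, each $H_i$ decomposes into a constant-sized ``body''---the two degree-two vertices together with their neighborhoods---joined by an induced path whose length grows linearly with $i$. Hence detection reduces to enumerating the polynomially many candidate placements of the body inside $G$ and, for each placement, deciding whether a suitable induced path of the required form connects the two prescribed endpoints in the subgraph obtained by forbidding the non-path vertices; this last task is polynomially solvable in chordal graphs. Reject if any induced $H_i$ is found.

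If $G$ passes all three tests, then $G$ is hereditarily CD (by Theorem~\ref{thm:characterizations}) and in particular connected-domishold. Since every chordal graph on $n$ vertices has at most $n$ minimal vertex separators~\cite{MR0408312}, $G$ lies in the class $\mathcal{G}_p$ with $p(n)=n$, so the algorithm of Theorem~\ref{thm:poly-cutsets} applied to $G$ returns, in polynomial time, an integral CD structure of $G$.

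The main obstacle will be establishing the polynomial-time detection of $H_i$ in the third step, since the forbidden family $\{H_i\}$ is infinite. The key to overcoming it is the parametric structure of $H_i$ (constant-sized body plus variable-length induced path), which reduces the infinite family to a polynomial number of tractable induced-path detection problems on chordal subgraphs.
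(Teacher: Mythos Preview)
Your proposal is correct and follows essentially the same strategy as the paper's proof: use Theorem~\ref{thm:characterizations} to reduce recognition to detecting the forbidden induced subgraphs, handle the infinite family $\{H_i\}$ by enumerating the constant-sized two-diamond ``body'' and performing a path/connectivity test for the remaining variable-length part, and then invoke Theorem~\ref{thm:poly-cutsets} via the $\mathcal{O}(n)$ bound on minimal vertex separators in chordal graphs. The paper's version is slightly more concrete---it brute-forces $F_1,F_2,H_1,H_2$, then enumerates induced copies of $2D$ (two disjoint diamonds) and, for each choice of tips $u\in D_1$, $v\in D_2$, simply tests whether $u$ and $v$ lie in the same component of $G-(N_{G-u}[V(D_1)\setminus\{u\}]\cup N_{G-v}[V(D_2)\setminus\{v\}])$---so your appeal to chordality for the induced-path detection is in fact unnecessary: once the appropriate neighborhoods are deleted, any shortest path between the designated endpoints is automatically induced and yields an induced $H_i$.
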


\begin{sloppypar}
\begin{proof}
One can verify in linear time that $G$ is chordal~\cite{MR2063679} and verifying that $G$ is also $\{F_{1}, F_{2}, H_1, H_2\}$-free can be done in time $\mathcal{O}(|V(G)|^8)$. Therefore, we only have to show that we can check in polynomial time that $G$ does not contain an induced subgraph of the form $H_i$ for each $i>2$.
Observe that for all $i>2$ the graph $H_i$ contains an induced subgraph isomorphic to $2D$, the union of two diamonds (see Fig.~\ref{fig:forbidden-induced-subgraphs} and Fig.~\ref{fig:kite}).
In $\mathcal{O}(|V(G)^8|)$ time, we can enumerate all induced subgraphs $F$ of $G$ isomorphic to $2D$. For each such subgraph $F$ we have to verify whether it can be extended to an induced subgraph of the form $H_i$, for some $i>2$. We do this as follows. Let $D_1$ and $D_2$ be the connected components (diamonds) of $F$. Furthermore, let $u_1, u_2$ be the two vertices of degree $2$ in $D_1$ and similarly let $v_1,v_2$ be the two vertices of degree $2$ in $D_2$. Now we can verify that $F$ is not contained in any induced subgraph of $G$ isomorphic to $H_i$ (for some $i>2$) by checking for each pair $u_i,v_j$, with $i,j\in \{1,2\}$, that $u_i$ and $v_j$ belong to different components of $G-(N_{G-u_i}[V(D_1)\setminus\{u_i\}]\cup N_{G-v_j}[V(D_2)\setminus\{v_j\}]).$ This can be done in polynomial time and consequently the recognition of hereditarily CD graphs is a polynomially solvable problem.

The second part of the theorem follows from Theorem~\ref{thm:poly-cutsets}, since every hereditarily CD graph is chordal and chordal graphs
are a subclass of $\mathcal{G}_p$ for the polynomial $p(n) = n$~\cite{MR0408312}.
\end{proof}
\end{sloppypar}

It might seem conceivable that a similar approach as the one used in Theorem~\ref{thm:poly-cutsets} could be used to develop an efficient
algorithm for recognizing connected-domishold graphs in classes of graphs with only polynomially many minimal connected dominating sets. However,
it is not known whether there exists an output-polynomial time algorithm for the problem of enumerating minimal connected dominating sets.
In fact, as shown by Kant\'e et al.~\cite{KLMN}, even when restricted to split graphs, this problem is equivalent to the well-known {\sc Trans-Enum} problem in hypergraphs, the problem of enumerating the inclusion-minimal transversals of a given hypergraph. The {\sc Trans-Enum} problem has been intensively studied but it is still open whether there exists an output-polynomial time algorithm for the problem (see, e.g., the survey~\cite{MR2437000}).

\subsection{The weighted connected dominating set problem}\label{sec:WCDS}

\begin{sloppypar}
The {\sc Weighted Connected Dominating Set (WCDS)} problem takes as input a connected graph $G$ together with
a cost function $c:V(G)\to \mathbb{R}^+$, and the task is to compute a connected dominating set of minimum total cost, where the cost of a set
$S\subseteq V(G)$ is defined, as usual, as $c(S) = \sum_{v\in S}c(v)$. The WCDS problem has been studied extensively due to its many applications in networking (see, e.g., ~\cite{MR2114596,MR2986095,Wu2001}). The problem is NP-hard not only for general graphs~\cite{MR1605684} but also for split graphs~\cite{LP83}, chordal bipartite graphs~\cite{MR918093}, circle graphs~\cite{MR1206328}, and cocomparability graphs~\cite{MR1491477}. Polynomial time algorithms for the problem were developed for interval graphs~\cite{MR1622646} and more generally for trapezoid graphs~\cite{MR2321555} and circular-arc graphs~\cite{MR1622646,MR2118302}, as well as for distance-hereditary graphs~\cite{MR1650455}.

In this section, we will identify further graph classes where the WCDS problem is polynomially solvable, including the class of $F_2$-free split graphs (see Fig.~\ref{fig:F2}). This result is interesting in view of the fact that for split graphs, the WCDS problem is not only \NP-hard but also hard to approximate, even in the unweighted case. This can be seen as follows: Let $\mathcal{H} = (V,E)$ be a Sperner hypergraph with $\emptyset,V\notin E$ and let $G$ be its split-incidence graph. Then $G$ is a connected split graph without universal vertices, hence $\MCH(G) = \MNH(G)$ by Lemma~\ref{lem:split}.
It can be seen that the hyperedge set of $\MNH(G)$ is exactly $E$, and therefore Proposition~\ref{prop:c-dom-ms} implies that
the problem of finding a minimum connected dominating set in $G$ is equivalent to the {\sc Hitting Set} problem in hypergraphs, the problem of
finding a minimum transversal of a given hypergraph. This latter problem is known to be equivalent to the well-known {\sc Set Cover} problem and
hence inapproximable in polynomial time to within a factor of $(1-\epsilon)\log |V|$, for any $\epsilon>0$, unless {\sf P = NP}~\cite{MR3238990}. It follows that the WCDS problem is
hard to approximate to within a factor of $(1-\epsilon)\log |V(G)|$ in the class of split graphs.
\end{sloppypar}

We will show that the WCDS problem is polynomially solvable in the class of hereditarily CD graphs; the result for $F_2$-free split graphs will then follow. Our approach is based on connections with vertex separators and Boolean functions. First, we recall the following known results about: (i) the relation between the numbers of prime implicants of a threshold Boolean function and its dual, and (ii) the complexity of dualizing threshold Boolean functions. These results were proved in the more general
context of regular Boolean functions (as well as for other generalizations, see, e.g.,~\cite{BorosRRR}).

\begin{theorem}\label{thm:dual}
Let $f$ be an $n$-variable threshold Boolean function having exactly $q$ prime implicants.
Then:
\begin{enumerate}
  \item (Bertolazzi and Sassano~\cite{BS87}, Crama~\cite{CRAMA198779}, see also~\cite[Theorem 8.29]{MR2742439}) The dual function $f^d$ has at most $N$ prime implicants, where $N$ is the total number of variables in the complete DNF of~$f$.
  \item (Crama and Hammer~\cite[Theorem 8.28]{MR2742439} and Peled and Simeone~\cite{MR1272494}) There is an algorithm running in time $\mathcal{O}(n^2q)$ that, given the complete DNF of $f$, computes the complete DNF of the dual function $f^d$.
\end{enumerate}
\end{theorem}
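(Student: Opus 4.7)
The plan is to derive both parts from the fact that every threshold positive Boolean function is \emph{regular}, in the sense of Hu and Muroga, together with the corresponding duality theory for regular functions. Fix an integral separating structure $(w_1,\ldots,w_n,t)$ of $f$ and, after reindexing, assume $w_1 \ge w_2 \ge \cdots \ge w_n \ge 0$. With respect to this ordering, $f$ is regular: replacing a $1$ in some position $j$ by a $1$ in a position $i<j$ can only increase the value of $f$, because $w_i \ge w_j$. Recall that the prime implicants of $f$ are the minimal sets $S\subseteq [n]$ with $w(S)\ge t$, while the prime implicants of $f^d$ are the minimal sets $T\subseteq [n]$ such that $w([n]\setminus T) < t$, i.e.\ the complements of the maximal ``false'' sets of $f$.

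The heart of the argument for part~1 is to construct an injection $\varphi\colon \mathrm{PI}(f^d) \to \{(C,i) : C\in \mathrm{PI}(f),\ i\in C\}$. Given a prime implicant $T$ of $f^d$, one selects $C$ by an extremal criterion relative to $T$ along the regularity order --- roughly, the lexicographically smallest prime implicant of $f$ obtainable from $[n]\setminus T$ by greedy ``right-shifts'' against the weight ordering --- and lets $i$ be the largest-indexed element of $C$. Regularity guarantees that this shifting process is reversible, so $T$ can be recovered uniquely from the pair $(C,i)$, yielding injectivity. Since the number of available pairs equals $N = \sum_{C\in \mathrm{PI}(f)}|C|$, one concludes $|\mathrm{PI}(f^d)| \le N$.

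For part~2, the correspondence $\varphi$ is explicit and runs in reverse as a dualization procedure. Enumerate all $N$ pairs $(C,i)$ with $C\in \mathrm{PI}(f)$ and $i\in C$, and for each pair compute the associated shifted set in $\mathcal{O}(n)$ time; this produces a list of at most $N \le nq$ candidate subsets of $[n]$. Sorting the candidates and sweeping through them in order, in $\mathcal{O}(n)$ per comparison, allows duplicates and non-minimal members to be discarded, resulting in total running time $\mathcal{O}(n \cdot N) = \mathcal{O}(n^2 q)$. The surviving list is precisely the complete DNF of $f^d$ by the correctness of $\varphi$.

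The main obstacle is pinning down the exact shift operation so that $\varphi$ is well-defined, injective, \emph{and} surjective onto $\mathrm{PI}(f^d)$. This is the combinatorial core of the classical arguments of Bertolazzi--Sassano, Crama, Crama--Hammer, and Peled--Simeone, and requires a careful case analysis exploiting both the total order induced by the weights and the minimality conditions characterising prime implicants of $f$ and $f^d$.
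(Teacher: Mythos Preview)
The paper does not prove this theorem; it is quoted as a known result from the cited references (Bertolazzi--Sassano, Crama, Crama--Hammer, Peled--Simeone), with the remark that these results were established in the more general setting of regular Boolean functions. Your sketch correctly identifies regularity as the key structural property and the shift correspondence between prime implicants of $f^d$ and (prime implicant, variable) pairs of $f$ as the mechanism behind the bound in part~1; this is indeed the classical route taken in those references.

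That said, your proposal is explicitly a proof \emph{outline} rather than a proof: you yourself flag that ``pinning down the exact shift operation so that $\varphi$ is well-defined, injective, and surjective'' is the main obstacle and defer to the cited works for it. (Incidentally, $\varphi$ maps \emph{out of} $\mathrm{PI}(f^d)$, so ``surjective onto $\mathrm{PI}(f^d)$'' is not what you mean; presumably you intend that the reverse construction in part~2 hits every prime implicant of $f^d$.) For part~2, your time analysis is also loose: sorting $N$ subsets of $[n]$ with $\mathcal{O}(n)$-cost comparisons costs $\mathcal{O}(nN\log N)$, not $\mathcal{O}(nN)$, and you have not argued that a single sweep suffices to discard all non-minimal members. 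The cited algorithms avoid both issues by generating the prime implicants of $f^d$ sequentially in a controlled order (via left/right shifts on maximal false points of a regular function) so that minimality is automatic. If you want a self-contained argument you will need to specify the shift map precisely and verify reversibility directly, which is exactly the ``combinatorial core'' you have left unfilled.
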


The algorithm by Crama and Hammer~\cite{MR2742439} is already presented as having time complexity $\mathcal{O}(n^2q)$, while the
one by Peled and Simeone~\cite{MR1272494} is claimed to run in time $\mathcal{O}(nq)$. However, since $f^d$ can have $\mathcal{O}(nq)$ prime implicants, the total size of the output is of the order $\mathcal{O}(n^2q)$. The time complexity $\mathcal{O}(nq)$ of the algorithm by Peled and Simeone relies on the assumption that the algorithm outputs the prime implicants of the dual function one by one, each time overwriting the previous prime implicant (with a constant number of operations per implicant on average).

The relation between the numbers of prime implicants of a threshold Boolean function and its dual given by Theorem~\ref{thm:dual} implies that classes of connected-domishold graphs with only polynomially many minimal cutsets are exactly the same as the classes of connected-domishold graphs with only polynomially many minimal connected dominating sets. More precisely:

\begin{lemma}\label{lem:duality}
Let $G=(V,E)$ be an $n$-vertex connected-domishold graph that is not complete. Let $\nu_c$ (resp.~$\nu_s$) denote the number of minimal connected dominating sets (resp.~of minimal cutsets) of $G$. Then $\nu_s\le (n-2)\nu_c$ and $\nu_c\le (n-2)\nu_s$.
\end{lemma}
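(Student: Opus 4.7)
The plan is to interpret $\nu_s$ and $\nu_c$ as the numbers of prime implicants of a threshold Boolean function and its dual, and then to invoke the duality bound from Theorem~\ref{thm:dual}(1) in both directions. By Proposition~\ref{prop:c-dom-graphs}, the cutset function $f := f_G^{\textit{cut}}$ is threshold, and, since $\MCH(G)$ is Sperner, its prime implicants are exactly the terms $\bigwedge_{v\in S}x_v$ with $S$ a minimal cutset of $G$; hence $f$ has exactly $\nu_s$ prime implicants. From the proof of Proposition~\ref{prop:c-dom-graphs}, $f^d(x) = 1$ if and only if the support of $x$ is a CD set of $G$, so the prime implicants of the positive function $f^d$ are in bijection with the minimal CD sets of $G$, and $f^d$ has exactly $\nu_c$ prime implicants. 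Moreover, $f^d$ is itself threshold, being the dual of a threshold function.

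Now apply Theorem~\ref{thm:dual}(1) to $f$ and to $f^d$ (using $(f^d)^d=f$). The first application gives $\nu_c \le \sum_S |S|$, where $S$ ranges over the minimal cutsets of $G$, and the second gives $\nu_s \le \sum_D |D|$, where $D$ ranges over the minimal CD sets of $G$. Thus it suffices to prove that every minimal cutset of $G$ has at most $n-2$ vertices and that every minimal CD set of $G$ has at most $n-2$ vertices. The first bound is immediate, since any cutset $S$ leaves $G-S$ with at least two components and hence at least two vertices.

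The main step is the bound $|D|\le n-2$ for every minimal CD set $D$. The plan is to show that if $D$ is CD and $|V\setminus D|\le 1$, then $D$ is not minimal. The case $D = V$ is handled by removing any non-cut vertex of $G$. The case $D = V\setminus\{w\}$ is the substantive one: $G-w$ is connected and, using $n\ge 3$, has at least two non-cut vertices, so one can pick a non-cut vertex $v$ of $G-w$ that, moreover, is distinct from $w$'s unique neighbor in the edge case where $w$ is a pendant of $G$. A short check shows that $(G-w)-v$ is still connected and that both $v$ and $w$ are still dominated by $D\setminus\{v\}=V\setminus\{v,w\}$ (for $v$: else $v$ would be isolated in $G-w$; for $w$: either $v\notin N(w)$, or $w$ has at least two neighbors), so $D\setminus\{v\}$ is a strictly smaller CD set. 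Combining the two bounds yields $\nu_c \le (n-2)\nu_s$ and $\nu_s \le (n-2)\nu_c$. The only delicate point in the plan is this final choice of $v$; the pendant case of $w$ is the one that needs separate attention.
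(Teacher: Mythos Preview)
Your proposal is correct and follows essentially the same approach as the paper's proof: both interpret $\nu_s$ and $\nu_c$ as the numbers of prime implicants of $f_G^{\textit{cut}}$ and its dual, invoke Theorem~\ref{thm:dual}(1) in both directions, and then bound the sizes of minimal cutsets and minimal CD sets by $n-2$. The only cosmetic difference is in the argument that no minimal CD set has more than $n-2$ vertices: the paper picks a leaf of a spanning tree of $G[D]$ avoiding the unique neighbor of the missing vertex, while you pick a non-cut vertex of $G[D]$ with the same avoidance; these are the same idea in different clothing.
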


\begin{proof}
By Proposition~\ref{prop:c-dom-graphs}, the cutset function $f_G^{\textit{cut}}$ is threshold.
Function $f_G^{\textit{cut}}$ is an $n$-variable function with exactly $\nu_s$ prime implicants in its complete DNF.
Recall from the proof of Proposition~\ref{prop:c-dom-graphs} that the dual function $(f_G^{\textit{cut}})^d$
takes value $1$ precisely on the vectors $x\in \{0,1\}^V$ whose support is a connected dominating set of $G$.
Therefore, the prime implicants of $(f_G^{\textit{cut}})^d$ are in bijective correspondence with the minimal connected dominating sets of $G$
and the number of prime implicants of $(f_G^{\textit{cut}})^d$ is exactly $\nu_c$.
Since every minimal cutset of $G$ has at most $n-2$ vertices, Theorem~\ref{thm:dual} implies that $\nu_c\le (n-2)\nu_s$, as claimed.

Conversely, since $f_G^{\textit{cut}} = ((f_G^{\textit{cut}})^d)^d$, the inequality $\nu_s\le (n-2)\nu_c$ can be proved by a similar approach, provided we show that every minimal connected dominating set of $G$ has at most $n-2$ vertices. But this is true since if $D$ is a connected dominating set of $G$ with at least $n-1$ vertices, with $V(G)\setminus\{u\}\subseteq D$ for some $u\in V(G)$, then a smaller connected dominating set $D'$ of $G$ could be obtained by fixing an arbitrary spanning tree $T$ of $G[D]$ and deleting from $D$ an arbitrary leaf $v$ of $T$ such that $N_G(u)\neq \{v\}$. (Note that since $G$ is connected but not complete, it has at least three vertices, hence $T$ has at least two leaves.) This completes the proof.
\end{proof}

We now have everything ready to derive the main result of this section. Recall that for a polynomial $p$, we denote by $\mathcal{ G}_p$ the class of graphs with at most $p(|V(G)|)$ minimal vertex separators.

\begin{theorem}\label{thm:poly}
For every nonzero polynomial $p$, the set of minimal connected dominating sets of an $n$-vertex
connected-domishold graph from $\mathcal{G}_{p}$ has size at most ${\mathcal{O}}(n\cdot {p}(n))$ and
can be computed in time ${\mathcal{O}}(n\cdot{p}(n)\cdot(n^2+{p}(n)))$.
In particular, the WCDS problem is solvable in polynomial time in the class of connected-domishold graphs from $\mathcal{G}_{p}$.
\end{theorem}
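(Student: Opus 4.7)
The plan is to combine the duality relation between minimal cutsets and minimal connected dominating sets established in Lemma~\ref{lem:duality} with the efficient algorithms for enumerating minimal vertex separators and for dualizing threshold Boolean functions.

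First I would dispose of the trivial case when $G$ is complete (a single call to WCDS returns any singleton of minimum cost) and henceforth assume $G$ is connected and not complete. The cardinality bound is then immediate: since $G\in\mathcal{G}_p$, the number $\nu_s$ of minimal cutsets of $G$ is at most the number of minimal vertex separators, hence $\nu_s\le p(n)$; by Lemma~\ref{lem:duality}, the number $\nu_c$ of minimal connected dominating sets of $G$ satisfies $\nu_c\le (n-2)\nu_s=\mathcal{O}(n\cdot p(n))$.

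For the algorithmic part, I would compute the minimal cutsets and then dualize. Concretely, run the algorithm of Berry et al.~\cite{MR1792122} to enumerate the set $\Sigma$ of all minimal vertex separators in time $\mathcal{O}(n^3|\Sigma|)=\mathcal{O}(n^3 p(n))$; then filter out the non-minimal elements of $\Sigma$ by pairwise inclusion tests in time $\mathcal{O}(n\cdot p(n)^2)$, producing the hyperedge set of $\MCH(G)$, which is exactly the set of prime implicants of the cutset function $f_G^{\textit{cut}}$. Since $G$ is connected-domishold, Proposition~\ref{prop:c-dom-graphs} guarantees that $f_G^{\textit{cut}}$ is threshold, so the second part of Theorem~\ref{thm:dual} applies: dualizing the complete DNF takes time $\mathcal{O}(n^2\nu_s)=\mathcal{O}(n^2 p(n))$ and outputs the complete DNF of $(f_G^{\textit{cut}})^d$. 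As noted in the proof of Lemma~\ref{lem:duality}, the prime implicants of $(f_G^{\textit{cut}})^d$ are in bijection with the minimal connected dominating sets of $G$. The total running time is $\mathcal{O}(n^3 p(n)+n\cdot p(n)^2+n^2 p(n))=\mathcal{O}(n\cdot p(n)\cdot(n^2+p(n)))$, as claimed.

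For the WCDS statement, I would note that because the cost function is non-negative, some optimal solution is inclusion-minimal; hence it suffices to enumerate all minimal connected dominating sets (using the procedure above), evaluate the cost $c(D)=\sum_{v\in D}c(v)$ for each in $\mathcal{O}(n)$ time, and return one of minimum total cost. This adds only $\mathcal{O}(n\cdot\nu_c)=\mathcal{O}(n^2 p(n))$ time and yields an overall polynomial-time algorithm for WCDS on connected-domishold graphs in $\mathcal{G}_p$. The main conceptual obstacle is the passage from cutsets to connected dominating sets: an unconditional polynomial bound on $\nu_c$ in terms of $\nu_s$ fails in general, and it is precisely the thresholdness of $f_G^{\textit{cut}}$ granted by the connected-domishold hypothesis, together with the regular-function duality bound from Theorem~\ref{thm:dual}, that makes this passage both size-preserving and algorithmically efficient.
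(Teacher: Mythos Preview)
Your proposal is correct and follows essentially the same approach as the paper's own proof: handle the complete case separately, bound $\nu_c$ via Lemma~\ref{lem:duality}, enumerate minimal vertex separators with the Berry--Bordat--Cogis algorithm, filter to obtain $\MCH(G)$, dualize the threshold cutset function via Theorem~\ref{thm:dual}, and read off the minimal connected dominating sets. Your running-time accounting and the treatment of WCDS match the paper's argument step for step.
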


\begin{proof}
Let $p$ and $G$ be as in the statement of the theorem and let $\mathcal{CD}(G)$ be the set of minimal connected dominating sets of $G$.
If $G$ is complete, then $\mathcal{CD}(G) =\{\{v\}: v\in V(G)\}$ and thus $|\mathcal{CD}(G)| = n = {\mathcal{O}}(n\cdot{p}(n))$ (since the polynomial is nonzero). Otherwise, we can apply Lemma~\ref{lem:duality} to derive $|\mathcal{CD}(G)|\le (n-2)\cdot {p}(n)$.

A polynomial time algorithm to solve the WCDS problem for a given connected-domishold graph $G\in \mathcal{G}_{p}$ with respect to a cost function $c:V(G)\to \mathbb{R}^+$ can be obtained as follows. First, we may assume that $G$ is not complete, since otherwise we can return a set $\{v\}$ where $v$ is a vertex minimizing $c(v)$. We use a similar approach as in the proof of Theorem~\ref{thm:poly-cutsets}.
Using the algorithm of Berry et al.~\cite{MR1792122}, we compute in time $\mathcal{O}(n^3p(n))$ the set $\Sigma$ of all minimal vertex separators of $G$. We can assume that each minimal vertex separator has its elements listed according to some fixed order of $V(G)$ (otherwise, we can sort them in time $\mathcal{O}(n\cdot p(n))$ using, e.g., bucket sort). The cutset hypergraph, $\MCH(G)$, is then computed by comparing each pair of sets in $\Sigma$ and discarding the non-minimal ones; this can be done in time $\mathcal{O}(n\cdot (p(n))^2)$. The cutset hypergraph directly corresponds to the complete DNF of the cutset function $f_G^{\textit{cut}}$.

The next step is to compute the complete DNF of the dual function $(f_G^{\textit{cut}})^d$. By Theorem~\ref{thm:dual}, this can be done in time
${\mathcal{O}}(n^2\cdot p(n))$. Since each term of the DNF is a prime implicant of $(f_G^{\textit{cut}})^d$ and the prime implicants of $(f_G^{\textit{cut}})^d$ are in bijective correspondence with the minimal connected dominating sets of $G$, we can read off from the DNF all the minimal
connected dominating sets of $G$. The claimed time complexity follows.

Once the list of all minimal connected dominating sets is available, a polynomial time algorithm for the WCDS problem on $(G,c)$ follows immediately.
\end{proof}

In the case of chordal graphs, we can improve the running time by using one of the known linear-time algorithms for listing the minimal vertex separators of a given chordal graph due to Kumar and Veni Madhavan~\cite{KumarM98}, Chandran and Grandoni~\cite{MR2204112}, and Berry and Pogorelcnik~\cite{MR2816655}.

\begin{sloppypar}
\begin{theorem}\label{thm:chordal}
Every $n$-vertex connected-domishold chordal graph has at most $\mathcal{O}(n^2)$ minimal connected dominating sets, which can be enumerated in time $\mathcal{O}(n^3)$. In particular, the WCDS problem is solvable in time $\mathcal{O}(n^3)$ in the class of connected-domishold chordal graphs.
\end{theorem}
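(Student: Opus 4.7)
The plan is to specialize Theorem~\ref{thm:poly} to chordal graphs. By the classical result of Rose~\cite{MR0408312}, every $n$-vertex chordal graph has at most $n$ minimal vertex separators, so the class of chordal graphs is contained in $\mathcal{G}_p$ for $p(n) = n$. Applying Theorem~\ref{thm:poly} with this $p$ immediately yields the bound $\mathcal{O}(n \cdot p(n)) = \mathcal{O}(n^2)$ on the number of minimal connected dominating sets of an $n$-vertex connected-domishold chordal graph, which matches the claim.

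For the running time, the generic estimate from Theorem~\ref{thm:poly} would be $\mathcal{O}(n \cdot p(n) \cdot (n^2 + p(n))) = \mathcal{O}(n^4)$, which is one factor of $n$ too large. I would therefore revisit the proof of Theorem~\ref{thm:poly} and replace only its dominant step: the invocation of the general algorithm of Berry et al.~\cite{MR1792122} to enumerate all minimal vertex separators, which runs in $\mathcal{O}(n^3 |\Sigma|)$ time. In the chordal case, any of the linear-time algorithms of Kumar and Veni Madhavan~\cite{KumarM98}, Chandran and Grandoni~\cite{MR2204112}, or Berry and Pogorelcnik~\cite{MR2816655} produces $\Sigma$ in $\mathcal{O}(n + m) = \mathcal{O}(n^2)$ time. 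The remaining steps from the proof of Theorem~\ref{thm:poly} carry through with $p(n) = n$: pairwise inclusion comparisons to extract the minimal cutsets (the hyperedges of $\MCH(G)$, and hence the complete DNF of $f_G^{\textit{cut}}$) run in $\mathcal{O}(n \cdot |\Sigma|^2) = \mathcal{O}(n^3)$; and, using the assumption that $G$ is connected-domishold together with Proposition~\ref{prop:c-dom-graphs}, $f_G^{\textit{cut}}$ is threshold, so Theorem~\ref{thm:dual} dualizes its complete DNF into that of $(f_G^{\textit{cut}})^d$ in $\mathcal{O}(n^2 |\Sigma|) = \mathcal{O}(n^3)$ time. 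Since the prime implicants of $(f_G^{\textit{cut}})^d$ are exactly the minimal connected dominating sets of $G$ (see Lemma~\ref{lem:duality}), they are obtained in $\mathcal{O}(n^3)$ total time, and the WCDS problem is then solved by evaluating the cost of each enumerated set and returning the cheapest, adding a further $\mathcal{O}(n^3)$ that does not change the asymptotic bound.

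I expect no substantial obstacle here, since the argument is essentially a bookkeeping refinement of Theorem~\ref{thm:poly} that replaces one subroutine by a faster chordal-specific one. The only mild point requiring care is to ensure that the chosen linear-time algorithm for chordal graphs returns each distinct minimal vertex separator at most once (or that any duplicates can be eliminated within the $\mathcal{O}(n^2)$ budget), so that $|\Sigma| \le n$ and the subsequent pairwise inclusion comparisons and dualization step genuinely fit in $\mathcal{O}(n^3)$.
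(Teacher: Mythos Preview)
Your proposal is correct and mirrors the paper's proof essentially step for step: the paper likewise invokes the $\mathcal{O}(n)$ bound on minimal vertex separators in chordal graphs, uses Lemma~\ref{lem:duality} for the $\mathcal{O}(n^2)$ count, replaces the generic separator-enumeration subroutine by a linear-time chordal-specific one, filters to minimal cutsets by pairwise comparison in $\mathcal{O}(n^3)$, dualizes via Theorem~\ref{thm:dual} in $\mathcal{O}(n^3)$, and finishes WCDS by scanning the $\mathcal{O}(n^2)$ minimal CD sets. The only cosmetic difference is that you frame the argument as a refinement of Theorem~\ref{thm:poly}, whereas the paper presents it directly.
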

\end{sloppypar}

\begin{proof}
Let $G$ be an $n$-vertex connected-domishold chordal graph. The theorem clearly holds for complete graphs, so we may assume that $G$ is not complete.
Since $G$ is chordal, it has at most $n$ minimal vertex separators~\cite{MR0408312}; consequently, $G$ has at most $n$ minimal cutsets.
Since $G$ is connected-domishold, it has at most $n(n-2)$ minimal connected dominating sets, by Lemma~\ref{lem:duality}.

The minimal connected dominating sets of $G$ can be enumerated as follows. First, we compute all the $\mathcal{O}(n)$ minimal vertex separators of $G$ in time $\mathcal{O}(n+m)$ (where $m = |E(G)|$) using one of the known algorithms for this problem on chordal graphs~\cite{MR2816655,MR2204112,KumarM98}.
Assuming again that each minimal vertex separator has its elements listed according to some fixed order of $V(G)$, we then
eliminate those that are not minimal cutsets in time $\mathcal{O}(n^3)$, by directly comparing each of the $\mathcal{O}(n^2)$ pairs for inclusion.

The list of $\mathcal{O}(n)$ minimal cutsets of $G$ yields its cutset function, $f_G^\emph{ms}$.
The list of minimal connected dominating sets of $G$ can be obtained in time $\mathcal{O}(n^3)$
by dualizing $f_G^\emph{ms}$ using one of the algorithms given by Theorem~\ref{thm:dual}.
The WCDS problem can now be solved in time $\mathcal{O}(n^3)$ by evaluating the cost of each of the
$\mathcal{O}(n^2)$ minimal connected dominating sets and outputting one of minimum cost.
\end{proof}

From Theorem~\ref{thm:chordal} we derive two new polynomially solvable cases of the WCDS problem.
Recall that the graphs $F_1$, $F_2$, and a general member of the family $\{H_i\}$ are depicted in
Fig.~\ref{fig:forbidden-induced-subgraphs}.

\begin{corollary}
The WCDS problem is solvable in time $\mathcal{O}(n^3)$ in the class of
$\{F_1,F_2,H_1,H_2,\ldots\}$-free chordal graphs and in particular in the class of $F_2$-free split graphs.
\end{corollary}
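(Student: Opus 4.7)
The plan is to observe that this corollary is essentially a packaging result, obtained by directly combining Theorem~\ref{thm:characterizations}, Corollary~\ref{cor:split}, and Theorem~\ref{thm:chordal}.

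First, I would invoke Theorem~\ref{thm:characterizations}, which identifies the class of $\{F_1,F_2,H_1,H_2,\ldots\}$-free chordal graphs with the class of hereditarily connected-domishold graphs. In particular, every graph in this class is chordal, and every connected graph in this class, being hereditarily CD, is itself connected-domishold. Consequently, Theorem~\ref{thm:chordal} applies and yields an algorithm solving the WCDS problem in time $\mathcal{O}(n^3)$ on any such graph. (If a disconnected input is allowed, we can simply test connectedness first in linear time, since the WCDS problem is only defined for connected inputs.)

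For the second assertion, I would invoke Corollary~\ref{cor:split}, which states that every $F_2$-free split graph is hereditarily connected-domishold. Since split graphs are $\{2K_2,C_4,C_5\}$-free, they contain no induced cycle of length at least four and are therefore chordal; in particular, every $F_2$-free split graph already belongs to the class of $\{F_1,F_2,H_1,H_2,\ldots\}$-free chordal graphs handled in the first part. Applying the same $\mathcal{O}(n^3)$ algorithm then gives the result.

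There is no genuine obstacle here: the substantive work lies in the results being invoked. The structural characterization (Theorem~\ref{thm:characterizations}) guarantees the connected-domisholdness needed to activate Theorem~\ref{thm:chordal}, while the latter theorem itself rests on the linear-time enumeration of minimal vertex separators in chordal graphs together with the dualization bounds for threshold Boolean functions from Theorem~\ref{thm:dual}. The corollary is therefore a one-line deduction once these ingredients are in place.
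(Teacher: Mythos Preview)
Your proposal is correct and follows essentially the same route as the paper's proof: invoke Theorem~\ref{thm:characterizations} to conclude that every $\{F_1,F_2,H_1,H_2,\ldots\}$-free chordal graph is (hereditarily) connected-domishold, apply Theorem~\ref{thm:chordal}, and then use Corollary~\ref{cor:split} for the $F_2$-free split graph case. The paper's proof is just a two-sentence version of exactly this argument.
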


\begin{sloppypar}
\begin{proof}
By Theorem~\ref{thm:characterizations}, every $\{F_1,F_2,H_1,H_2,\ldots\}$-free chordal graphs is (hereditarily) CD so Theorem~\ref{thm:chordal} applies. The statement for $F_2$-free split graphs follows from Corollary~\ref{cor:split}.
\end{proof}
\end{sloppypar}

We conclude this section with two remarks, one related to Theorem~\ref{thm:chordal} and one related to Theorems~\ref{thm:poly-cutsets} and~\ref{thm:poly}.

\begin{sloppypar}
\begin{remark}
The bound $\mathcal{O}(n^2)$ given by Theorem~\ref{thm:chordal} on the number of minimal connected dominating sets in an $n$-vertex connected-domishold chordal graph is sharp. There exist $n$-vertex connected-domishold chordal graphs with $\Theta(n^2)$ minimal connected dominating sets. For instance, let $S_n$ be the split graph with $V(S_n) = K\cup I$ where $K = \{u_1,\ldots, u_n\}$ is a clique, $I = \{v_1,\ldots, v_n\}$ is an independent set, $K\cap I = \emptyset$, and for each $i\in [n]$, vertex $u_i$ is adjacent to all vertices of $I$ except $v_i$. Since every vertex in $I$ has a unique non-neighbor in $K$, we infer that
$S_n$ is $F_2$-free. Therefore, by Corollary~\ref{cor:split} graph $S_n$ is a (hereditarily) connected-domishold graph.
Note that every set of the form $\{u_i,u_j\}$ where $1\le i<j\le n$ is a minimal connected dominating set of $S_n$.
It follows that $S_n$ has at least ${n\choose 2} = \Theta(|V(S_n)|^2)$ minimal connected dominating sets.
\end{remark}
\end{sloppypar}

\begin{remark}
Theorems~\ref{thm:poly-cutsets} and~\ref{thm:poly} motivate the question of whether there is a polynomial $p$ such that every connected CD graph $G$ has at most $p(|V(G)|)$ minimal vertex separators.
As shown by the following family of graphs, this is not the case.
For $n\ge 2$, let $G_n$ be the graph obtained from the disjoint union of $n$ copies of the $P_4$,
say $(x_i,a_i,b_i,y_i)$ for $i = 1,\ldots, n$, by identifying all vertices $x_i$ into a single vertex $x$,
all vertices $y_i$ into a single vertex $y$, and for each vertex $z$ other than $x$ or $y$,
adding a new vertex $z'$ and making it adjacent only to $z$. It is not difficult to see that
$G_n$ has exactly two minimal CD sets, namely $\{a_1,\ldots, a_n\}\cup \{b_1,\ldots, b_n\}\cup \{v\}$ for $v\in \{x,y\}$.
A CD structure of $G_n$ is given by $(w,t)$ where $t = 4n+1$, $w(x) = w(y) = 1$,
$w(a_i) = w(b_i) = 2$ for all $i\in \{1,\ldots, n\}$ and $w(z) = 0$ for all other vertices $z$.
Therefore, $G_n$ is CD. However, $G_n$ has $4n+2$ vertices and $2^n$ minimal $x,y$-separators, namely all sets of the form
$\{c_1,\ldots, c_n\}$ where $c_i\in \{a_i,b_i\}$ for all $i$.
\end{remark}

\section{Proof of Lemma~\ref{diamondLemma} (Diamond Lemma)}\label{sec:proof}

In the proof of the Diamond Lemma, we use the following notation.
We write $u\sim v$ (resp.~$u\nsim v$) to denote the fact that two vertices $u$ and $v$ are adjacent (resp.~non-adjacent). Given two vertex sets $A$ and $B$ in a graph $G$, we denote by $e(A,B)$ the number of edges with one endpoint in $A$ and one endpoint in $B$. A {\em pattern} is a triple $(V,E,F)$ where $G=(V,E)$ is a graph and $F$ is a subset of non-adjacent vertex pairs of $G$. We say that a graph $G'$ {\em realizes a pattern} $(V,E,F)$ if $V(G')=V$ and $E\subseteq E(G')\subseteq E\cup F$.

\begin{figure}[h]
  \centering
  \includegraphics[width=0.8\linewidth]{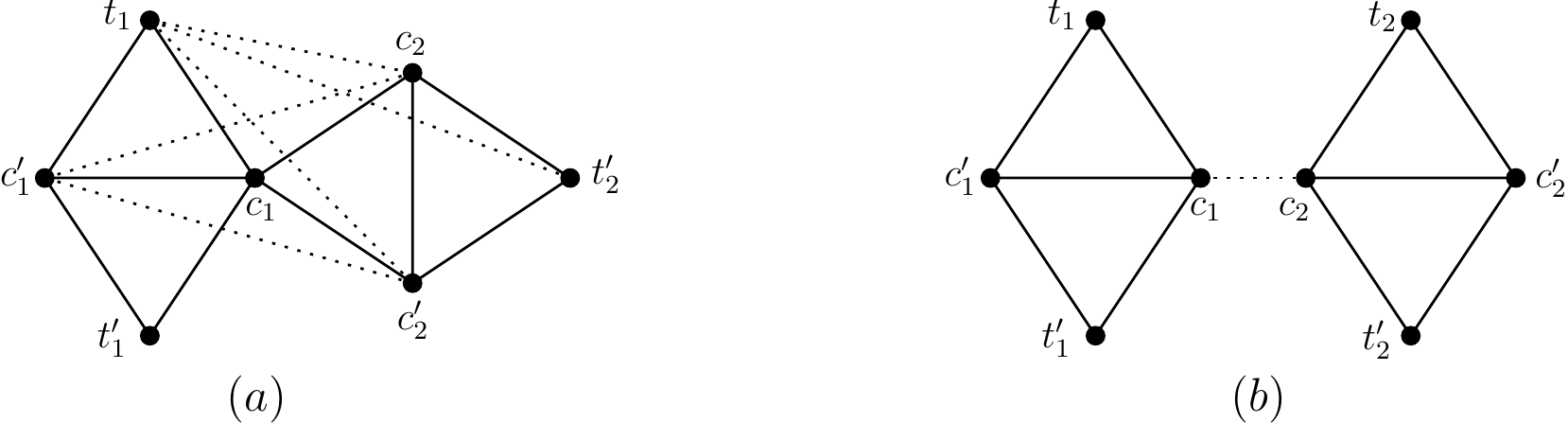}\\
  \caption{Two patterns $(V,E,F)$ used in the proofs. Graphs $(V,E)$ are depicted with solid lines. Possible additional edges (elements of $F$) are depicted with dotted lines. }\label{Fig:Lemmas}
\end{figure}

We start with a lemma.

\begin{lemma}\label{lem:c_1-not-adjacent-to-C_2}
Let $G$ be a connected chordal graph and let $H$ be an induced subgraph of $G$ that realizes the pattern in Fig.~\ref{Fig:Lemmas}\,$(a)$. Moreover, suppose that:
\begin{enumerate}[1)]
  \item vertices $t_1$ and $t_1'$ are in different components of $G-\{c_1,c_1'\}$,and \label{lem-ppt1}
  \item the component of $G-\{c_1,c_1'\}$ containing $\{c_2,c_2',t_2'\}$ has a vertex dominating $\{c_1,c_1'\}$. \label{lem-ppt2}
\end{enumerate}
Then $G$ contains $F_1$ or $F_2$ as an induced subgraph.
\end{lemma}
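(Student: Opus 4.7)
Let $A$ denote the connected component of $G-C_1$ that contains $\{c_2,c_2',t_2'\}$, and by hypothesis~1 fix a second component $B$ of $G-C_1$ containing one of the tips $t_1,t_1'$; by the symmetric roles of $t_1$ and $t_1'$ in the pattern we may assume $t_1'\in B$. Apply hypothesis~2 to pick a vertex $v\in A$ with $v\sim c_1$ and $v\sim c_1'$. Since $v\in A$ and $t_1'\in B$, the edge $v t_1'$ is automatically absent. The proof then proceeds by augmenting $H$ with $v$ (and, if necessary, with internal vertices of a shortest path inside $A$ from $v$ to $\{c_2,c_2'\}$) and reading off an induced copy of $F_1$ or $F_2$ on a suitable subset of the enlarged configuration.

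\textbf{Case split.} I would distinguish according to whether $v$ coincides with a vertex of $\{c_2,c_2',t_2'\}$ or is a new vertex of $A$. If $v\in\{c_2,c_2'\}$, then one of the centres of $D_2$ dominates $C_1$, so $V(H)$ already carries the edge configuration needed for $F_1$ or $F_2$, the distinction being governed by which of the dotted edges of the pattern in Fig.~\ref{Fig:Lemmas}$(a)$ are actually present. Otherwise, take a shortest path $v=u_0,u_1,\ldots,u_k$ in $A$ joining $v$ to $\{c_2,c_2'\}$; I would argue by induction on $k$ that either the path can be shortened (so we eventually reduce to $k=0$), or the subgraph induced on $V(H)\cup\{v,u_1,\ldots,u_{k-1}\}$ already contains $F_1$ or $F_2$. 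In the base case $k=0$, i.e.\ $v$ adjacent to one of $c_2,c_2'$, a direct check identifies the forbidden subgraph, with the choice between $F_1$ and $F_2$ dictated by whether $v$ is adjacent to $t_1$, to $t_2'$, or to the second centre of $D_2$.

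\textbf{Main obstacle.} The delicate step is verifying that the candidate copy of $F_1$ or $F_2$ is genuinely induced, i.e.\ that none of the chord edges forbidden by $F_1$ or $F_2$ are present in $G$. The missing edges inside $H$ itself are guaranteed by the pattern together with hypothesis~1 (which places $t_1'$ outside $A$). The edges to worry about are those incident with $v$ (and the intermediate $u_i$): any such unwanted edge would, together with edges of $H$ and of the path, close an induced $4$-cycle such as $v,c_1,t_1',u_1,v$ or $v,c_1',c_2,u_{k-1},v$, contradicting chordality of $G$. Systematically applying this chord-forcing mechanism, together with the fact that $\{c_1,c_1'\}$ already separates $A$ from $t_1'$, should eliminate every problematic adjacency. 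The hardest part will be organizing the bookkeeping uniformly, which I plan to do by first reducing to the case where $v$ is adjacent to some vertex of $C_2$ (using chordality to repeatedly shorten the $v$-to-$C_2$ path and relocate $v$) and then splitting only on the adjacency pattern of the final $v$ with $t_1$ and $t_2'$.
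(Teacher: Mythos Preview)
Your overall architecture---pick a vertex $v\in A$ dominating $C_1$, connect it by a shortest path in $A$ to $\{c_2,c_2'\}$, and exploit chordality---matches the paper's. But two points in your plan are off, and one of them is a real gap.

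First, the minor point: $t_1$ and $t_1'$ do \emph{not} play symmetric roles in pattern~(a); only $t_1$ carries dotted edges to $\{c_2,c_2',t_2'\}$. You therefore cannot choose which tip lies in $B$. The paper sidesteps this by never fixing which tip is outside $A$: at the moment it is needed, one simply observes that property~1 forces some $t\in\{t_1,t_1'\}$ with $w\nsim t$, and that $t$ is used directly.

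Second, and more substantive: your Main Obstacle paragraph asserts you will ``reduce to the case where $v$ is adjacent to some vertex of $C_2$'' by repeatedly relocating $v$ along the path. This reduction cannot be carried out. Relocating $v$ to a later path vertex $u_i$ requires $u_i$ to still dominate $C_1$, i.e.\ $u_i\sim c_1'$; chordality forces $c_1\sim u_i$ but says nothing about $c_1'$. In fact the paper chooses $w$ to be the $C_1$-dominating vertex in $A$ \emph{closest} to $V_2'=\{c_2,c_2',t_2'\}$ and then proves that this optimal $w$ has \emph{no} neighbour in $V_2'$ at all, so the minimal path length is at least two and no further relocation is possible. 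The forbidden subgraph is then extracted not from $\{v\}\cup V(H)$ but from the last few internal path vertices: after forcing $c_1$ to dominate the path and $w_{k-1}\sim t_2',c_2$, an induced $F_1$ appears on $\{t_2',c_2,c_1,w_{k-1},w_{k-2},w_{k-3}\}$ (with the convention $w_0:=c_1'$ when $k=3$, which is precisely where the domination $w\sim c_1'$ and the minimality of $w$ are used). Your induction statement does mention the alternative ``or the subgraph already contains $F_1$ or $F_2$,'' and that alternative is what actually fires---but your base-case discussion and your chord-forcing examples (e.g.\ the alleged $4$-cycle $v,c_1,t_1',u_1$, which cannot occur since $t_1'$ and $u_1$ lie in different components of $G-C_1$) suggest you have not yet identified where the forbidden subgraph really sits. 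It would also streamline your argument to eliminate all dotted edges of the pattern up front, as the paper does, before introducing $v$ and the path.
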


\begin{proof}
By contradiction. Suppose that $G$ and $H$ satisfy the assumptions of the lemma, but $G$ is $\{F_1,F_2\}$-free. We will first show that none of the dotted edges can be present in $H$.
We infer that $c_1'\nsim c_2$ and $c_1'\nsim c_2'$, or an induced $F_1$ or $F_2$ arises on the vertex set $V(H)\setminus\{t_1\}$, depending on whether one or both edges are present.
Next, $t_1\nsim t_2'$, since otherwise a $4$-cycle arises on the vertex set $\{t_1,c_1,c_2',t_2'\}$ (if $t_1\nsim c_2'$) or an induced $F_1$ arises on the vertex set $V(H)\setminus\{c_2\}$ (otherwise).
Finally, we infer that $t_1\nsim c_2$ and $t_1\nsim c_2'$, or otherwise an induced $F_1$ or $F_2$ arises on the vertex set $V(H)\setminus\{t_1'\}$, depending whether one or both edges are present.

Let $K$ be the component of $G-\{c_1,c_1'\}$ such that $V_2'=\{c_2,c_2',t_2'\}\subseteq V(K)$, and let $w\in V(K)$ be a vertex dominating $\{c_1,c_1'\}$ that is closest to $V_2'$ in $K$. Clearly, $w\notin V_2'$.
We will now show that $w\nsim v$ for any $v\in V_2'$. Suppose for a contradiction that $w\sim v$ for some $v\in V_2'$.
Note that $w\notin \{t_1,t_1'\}$ since there are no edges between the sets $\{t_1,t_1'\}$ and $V_2'$.
Furthermore, property $(\ref{lem-ppt1})$ implies that there exists some $t\in \{t_1,t_1'\}$ such that $w\nsim t$.
Suppose that $w\sim t_2'$. Then $w\sim c_2$, since otherwise a $4$-cycle arises on the vertex set $\{w,c_1,c_2,t_2'\}$.
But now the vertex set $\{t_2',c_2,w,c_1,c_1',t\}$ induces a copy of $F_1$ in $G$. Therefore $w\nsim t_2'$, and
an induced $F_1$ or $F_2$ arises on the vertex set $V_2'\cup \{w,c_1,c_1'\}$, depending on whether $w$ is adjacent to one or both vertices in $\{c_2,c_2'\}$. This contradiction shows that $w$ has no neighbor in $V_2'$.

Let $P=(w=w_1,\dots,w_k)$ with $w_k\in V_2'$ be a shortest $w,V_2'$-path in $K$. Note that $k\geq 3$ and the choice of $P$ implies that for all $i\in \{1,\dots,k-2\}$  vertex $w_i$ is not adjacent to any vertex in $V_2'$. In order to avoid an induced cycle of length at least $4$ within $V(P)\cup V_2'\cup \{c_1\}$, we infer that vertex $c_1$ must be adjacent to all the internal vertices of $P$ (that is, to $w_2,\dots,w_{k-1}$).
Next we infer that $w_{k-1}\sim t_2'$, since otherwise the vertex set $V_2'\cup \{c_1,w_{k-1},w_{k-2}\}$ induces a copy of $F_1$ or $F_2$ (depending on the number of edges between $w_{k-1}$ and $\{c_2,c_2'\}$).
Moreover, to avoid an induced $4$-cycle on the vertex set $\{t_2',w_{k-1},c_1,c_2\}$, we infer that $w_{k-1}\sim c_2$.
But now an induced $F_1$ arises on the vertex set $\{t_2',c_2,c_1,w_{k-1},w_{k-2},w_{k-3}\}$ (where if $k=3$, we set $w_0=c_1'$).
This last contradiction completes the proof of Lemma~\ref{lem:c_1-not-adjacent-to-C_2}.
\end{proof}

\begin{diamondLemma}[Diamond Lemma (restated)]
Let $G$ be a connected chordal graph. Suppose that $G$ contains two induced diamonds $D_1=(V_1,E_1)$ and $D_2=(V_2,E_2)$ such that:
\begin{enumerate}[(i)]
  \item $C_1\cap C_2=\emptyset$.\label{ppt1}
  \item If no vertex in $C_1$ is adjacent to a vertex of $C_2$, then every minimal $C_1,C_2$-separator in $G$ is of size one.\label{ppt4}
  \item For each $j\in\{1,2\}$ the tips (i.e., $t_j,t_j'$) of $D_j$ belong to different components of $G-C_j$.\label{ppt2}
  \item For $j\in\{1,2\}$ every component of $G-C_j$ has a vertex that dominates $C_j$.\label{ppt3}
\end{enumerate}
Then $G$ has an induced subgraph isomorphic to $F_1,F_2,$ or $H_i$ for some $i\geq 1$, where the graphs $F_1$, $F_2$, and a general member of the family $\{H_i\}$ are depicted in Fig.~\ref{fig:forbidden-induced-subgraphs}.
\end{diamondLemma}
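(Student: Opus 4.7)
The plan is to split into cases based on whether some vertex in $C_1$ is adjacent to some vertex in $C_2$. If so, an induced $F_1$ or $F_2$ will be produced via Lemma~\ref{lem:c_1-not-adjacent-to-C_2}; otherwise, the size-one minimal separator provided by~(\ref{ppt4}) combines with the dominating vertices furnished by~(\ref{ppt3}) to expose an induced $H_i$ for some $i\geq 1$.

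In the first case, suppose without loss of generality that $c_1\sim c_2$. The seven vertices $t_1,c_1,c_1',t_1',c_2,c_2',t_2'$, pairwise distinct thanks to~(\ref{ppt1}) and the diamond definitions (modulo a few shared-tip configurations that can be dispatched by direct inspection), induce in $G$ a subgraph realizing the pattern of Fig.~\ref{Fig:Lemmas}\,(a). Conditions~(\ref{ppt2}) and~(\ref{ppt3}) specialized to $D_1$ are precisely the extra hypotheses required by Lemma~\ref{lem:c_1-not-adjacent-to-C_2}: the tips $t_1,t_1'$ of $D_1$ lie in different components of $G-C_1$, and the component of $G-C_1$ containing the triangle $\{c_2,c_2',t_2'\}$ (which sits outside $C_1$ by~(\ref{ppt1}) and is connected via the triangle and the edge $c_1\sim c_2$) has a vertex dominating $C_1$. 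Lemma~\ref{lem:c_1-not-adjacent-to-C_2} then produces the desired induced $F_1$ or $F_2$.

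In the second case, no vertex of $C_1$ is adjacent to any vertex of $C_2$, so by~(\ref{ppt4}) there exists a minimal $C_1,C_2$-separator of size one, say $\{s\}$. Choose a shortest path $P=(c_1,v_1,\dots,v_{k-1},c_2)$ from $C_1$ to $C_2$ whose interior lies outside $C_1\cup C_2$; then $k\geq 2$ and $s$ is an interior vertex of $P$. By~(\ref{ppt3}), the component of $G-C_1$ containing $v_1$ has a vertex $u_1$ dominating $C_1$, and symmetrically the component of $G-C_2$ containing $v_{k-1}$ has a vertex $u_2$ dominating $C_2$. The aim is to show that $D_1$, $D_2$, the path $P$, and (when necessary) the vertices $u_1,u_2$ together induce a copy of some $H_i$ from the family in Fig.~\ref{fig:forbidden-induced-subgraphs}.

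The principal obstacle is controlling the chords forced by chordality on the long cycles formed by $P$ together with paths through the centers of $D_1$ and $D_2$: every such cycle of length at least four must be triangulated, and each resulting chord threatens the induced structure. The key observation is that each such chord leads to one of two outcomes: either it contradicts the minimality of $P$ or of the separator $\{s\}$, or it creates on a shorter sub-configuration a witness for the pattern of Fig.~\ref{Fig:Lemmas}\,(a), enabling a local application of Lemma~\ref{lem:c_1-not-adjacent-to-C_2} to yield $F_1$ or $F_2$. Iterating this clean-up along $P$ eventually leaves a chordless induced $H_i$, or produces an induced $F_1$, $F_2$, or smaller $H_j$ along the way, completing the proof.
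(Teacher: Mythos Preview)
Your high-level case split mirrors the paper's strategy, but both cases have genuine gaps that the paper spends several pages closing.

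\textbf{Case 1 does not work as written.} To invoke Lemma~\ref{lem:c_1-not-adjacent-to-C_2} you need the seven vertices to \emph{realize} the pattern in Fig.~\ref{Fig:Lemmas}\,(a), which requires specific non-edges, not just the presence of $c_1c_2$. In particular the pattern forbids $c_1\sim c_2'$, $c_1\sim t_2'$, and any edge from $t_1'$ to $\{c_2,c_2',t_2'\}$---none of which is given. If, say, $e(C_1,C_2)\ge 2$, the pattern is simply not realized and the lemma does not apply; the paper handles this separately (its Claim~\ref{claim:Centers01}), and even then uses the lemma only once the edges are pinned down to $c_1c_2$ and $c_1c_2'$. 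Likewise, the ``few shared-tip configurations'' you wave away are in fact the content of the paper's Claims~\ref{claim:nonDominatingTip}--\ref{claim:intersectionEmpty}: one must rule out $V_1\cap V_2\neq\emptyset$ (including $t_2'\in C_1$) and ensure $t_1'$ has no neighbors in $V_2$ before the pattern can be set up. These are not cosmetic checks; several of them already produce an $F_1$, $F_2$, or $H_1$ on their own.

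\textbf{Case 2 is a plan, not a proof.} The idea of routing through a cut vertex $s$ and appending dominating vertices $u_1,u_2$ is sound, but ``iterating this clean-up along $P$'' hides exactly the work: one must control edges from $c_1',c_2'$ and from $u^{-1},u^{-2}$ into $P$, show the two auxiliary paths actually meet (this is where the size-one separator is used, in the paper's Claim~\ref{claim:non-disjoint}), and then argue that a minimal ``weakly induced $H_n$'' is in fact induced. The paper does all of this through Claims~\ref{claim:TipsCenters01}--\ref{claim:uv} and a final minimality argument; your sketch does not indicate how any individual chord is eliminated, nor why the process terminates without creating new problems elsewhere.
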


\begin{cproof}
We will prove the Diamond Lemma by contradiction through a series of claims. Let $G$ be a connected chordal graph and let $D_1$ and $D_2$ be two induced diamonds with properties $(\ref{ppt1})-(\ref{ppt3})$ in $G$. Suppose for a contradiction that $G$ is $\{F_1,F_2,H_1,H_2,\dots\}$-free.

\begin{claim2}\label{claim:nonDominatingTip}
For each $j\in \{1,2\}$, there exists some $t\in T_j$ such that $N[t]\cap C_{3-j}=\emptyset$ (that is, each diamond has a tip that does not dominate any center of the other diamond).
\end{claim2}

\begin{proof}
Suppose that both tips of $D_j$ dominate $C_{3-j}$. Then $T_j$ belongs to one component of $G-C_j$, contradicting property $(\ref{ppt2})$.
\end{proof}

\begin{claim2}\label{claim:OneTip}
If there exists some $t\in T_1\cap T_2$, then $T_1\cap T_2 =\{t\}$ and $T_j\cap C_{3-j}=\emptyset$  for $j\in \{1,2\}$.
\end{claim2}

\begin{proof}
Follows immediately from Claim~\ref{claim:nonDominatingTip} and property $(\ref{ppt2})$.
\end{proof}

\begin{claim2}\label{claim:intersectionleq1}
$|V_1\cap V_2|\leq 1$.
\end{claim2}
\begin{proof}
First note that we have $|T_1\cap V_{2}|\leq 1$, since otherwise $T_1=T_2$, contradicting property $(\ref{ppt2})$. Observe also that by property $(\ref{ppt1})$ we have $C_1\cap V_{2}\subseteq C_1\cap T_{2}$, implying that $|C_1\cap V_2|\leq 1$. Consequently $|V_1\cap V_2|\leq 2$.


Now suppose for a contradiction that $|V_1\cap V_2|=2$. By property $(\ref{ppt1})$ and Claim~\ref{claim:OneTip} we may assume without loss of generality that $c_1=t_2$ and $t_1'=c_2'$.
To avoid an induced $4$-cycle on the set $T_1\cup T_2$ we infer that $t_1\nsim t_2'$. Furthermore, property $(\ref{ppt2})$ implies that $c_1'\nsim t_2'$ and $c_2\nsim t_1$. But now the set $V_1\cup V_2$ induces a copy of $F_1$ (if $c_1'\nsim c_2)$ or a copy of $F_2$ (otherwise).
\end{proof}

\begin{claim2}\label{claim:intersectionTips}
If $V_1\cap V_2=\{v\}$ then $v\in T_1\cap T_2$.
\end{claim2}

\begin{proof}
Suppose for a contradiction that $V_1\cap V_2=\{v\}$, and $v\notin T_1\cap T_2$. Property $(\ref{ppt1})$ implies that $v\in T_j\cap C_{3-j}$ for some $j\in\{1,2\}$, say $v=c_1=t_2$. Claim~\ref{claim:nonDominatingTip} implies (without loss of generality) that $t_1'\nsim c_2$ and $t_1'\nsim c_2'$.
Property $(\ref{ppt2})$ implies that $c_1'\nsim t_2'$. Note that $t_1'\nsim t_2'$, or otherwise a $4$-cycle arises on the vertex set $\{t_1',c_1,c_2,t_2'\}$.
Now the subgraph of $G$ induced by $V_1\cup V_2$ realizes the pattern depicted in Fig.~\ref{Fig:Lemmas}\,$(a)$ and we apply Lemma~\ref{lem:c_1-not-adjacent-to-C_2} to derive a contradiction.
\end{proof}

\begin{claim2}\label{claim:intersectionEmpty}
$V_1\cap V_2=\emptyset$.
\end{claim2}
\begin{proof}
Suppose for a contradiction that $V_1\cap V_2\neq \emptyset$. Claim~\ref{claim:intersectionleq1} implies that $V_1\cap V_2=\{v\}$ and by Claim~\ref{claim:intersectionTips}, $v\in T_1\cap T_2$. Without loss of generality we may assume that $t_1=t_2$.
Claim~\ref{claim:nonDominatingTip} implies that there is no edge between $t_1'$ and $C_2$ and between $t_2'$ and $C_1$. Furthermore, we must have $t_1'\nsim t_2'$ since otherwise $G$ contains an induced $4$-cycle on the vertex set $\{t_1',c_1,c_2,t_2'\}$ (if $c_1\sim c_2$) or an induced $5$-cycle on the vertex set $\{t_1',c_1,t_1,c_2,t_2'\}$ (otherwise).

It remains to analyze the edges between $C_1$ and $C_2$. Clearly, $e(C_1,C_2)\in\{0,1,\dots,4\}$. Notice that
$$e(C_1,C_2) = \left\{
             \begin{array}{ll}
             0 & \hbox{implies an induced $H_1$ on the set $V_1\cup V_2$};\\
             1 & \hbox{implies an induced $F_1$ on the vertex set $(V_1\cup V_2)\setminus \{t_1'\}$};\\
             3 & \hbox{implies an induced $F_1$ on the vertex set $(V_1\cup V_2)\setminus \{t_1\}$};\\
             4 & \hbox{implies an induced $F_2$ on the vertex set $(V_1\cup V_2)\setminus \{t_1\}$}.
             \end{array}
           \right.$$
Consequently $e(C_1,C_2)=2$, and without loss of generality, to avoid an induced $4$-cycle, we may assume that $c_1\sim c_2$ and $c_1\sim c_2'$. But now an induced $F_2$ arises on the vertex set $(V_1\cup V_2)\setminus \{t_1'\}$.
\end{proof}

In the rest of the proof of the Diamond Lemma we consider the edges between $V_1$ and $V_2$.
By Claim~\ref{claim:nonDominatingTip} and property $(\ref{ppt2})$
we may assume without loss of generality the following.

\begin{assumption}\label{ass1}
$e(\{t_1'\},V_2) = e(\{t_2'\},V_1) = 0$.
\end{assumption}

Therefore, it remains to consider only the (non)edges between $\{t_1\}$ and $C_2$, between $\{t_2\}$ and $C_1$, between $C_1$ and $C_2$, and between $\{t_1\}$ and $\{t_2\}$.

\begin{claim2} \label{claim:Centers01}
$e(C_1,C_2)\leq 1$.
\end{claim2}

\begin{proof}
Clearly, $e(C_1,C_2)\le 4$.
Note that if $e(C_1,C_2)\in \{3,4\}$, then the vertex set $(V_1\cup V_2)\setminus \{t_1,t_2\}$ induces either a copy of $F_1$ or a copy of $F_2$.
Furthermore, if $e(C_1,C_2)=2$, then, to avoid an induced $4$-cycle, we may assume without loss of generality that $c_1\sim c_2$ and $c_1\sim c_2'$.
Now the subgraph of $G$ induced by $(V_1\cup V_2)\setminus\{t_2\}$ realizes the pattern depicted in Fig.~\ref{Fig:Lemmas}\,$(a)$ (without any additional edges) and we apply Lemma~\ref{lem:c_1-not-adjacent-to-C_2} to derive a contradiction.
\end{proof}

By Claim~\ref{claim:Centers01} we may assume without loss of generality
the following.

\begin{assumption}\label{ass2}
$c_1'\nsim c_2$, $c_1'\nsim c_2'$, and $c_1\nsim c_2'$.
\end{assumption}

\begin{claim2}\label{claim:TipsCenters01}
$e(t_j,C_{3-j})\leq 1$ for any $j\in \{1,2\}$; moreover, if $e(t_j,C_{3-j})=1$ then we may assume without loss of generality that $t_j\sim c_{3-j}$.
\end{claim2}

\begin{proof}
Suppose for a contradiction that $e(t_j,C_{3-j})=2$.
To avoid an induced $H_1$ on the vertex set $(V_1\cup V_2)\setminus\{t_{3-j}\}$, we must have an edge between $C_1$ and $C_2$.
By Claim~\ref{claim:Centers01} we may assume that $c_1\sim c_2$, but now an induced $F_1$ arises on the vertex set $V_j\cup C_{3-j}$.

Suppose now that $e(t_j, C_{3-j})=1$. To see that we may assume without loss of generality, that $t_j\sim c_{3-j}$, note that this can be achieved by swapping $c_{3-j}$ and $c_{3-j}'$ (if necessary) when $c_1\nsim c_2$, while if $c_1\sim c_2$, then $t_j\sim c_{3-j}$, since otherwise the vertex set $\{t_j,c_1,c_2,c_{3-j}'\}$ induces a $4$-cycle in $G$.
\end{proof}

\begin{claim2}\label{claim:noTedge}
$t_1\nsim t_2$.
\end{claim2}

\begin{proof}
Suppose for a contradiction that $t_1\sim t_2$.
First we will show that $c_1\sim t_2$ or $c_2\sim t_1$. Suppose for a contradiction that $c_1\nsim t_2$, and $c_2\nsim t_1$.
Then an induced $H_2$ arises on the set $V_1\cup V_2$ (if $c_1\nsim c_2$) or an induced $4$-cycle on the vertex set $\{c_1,t_1,t_2,c_2\}$ (otherwise).

Without loss of generality we may assume that $c_1\sim t_2$.
Then Claim~\ref{claim:TipsCenters01} implies $t_2\nsim c_1'$, and to avoid an induced $H_1$ on the vertex set $(V_1\cup V_2)\setminus \{t_1'\}$, we must have an edge between $t_1$ and $C_2$ or $c_1\sim c_2$.
If $t_1\sim c_2$ then $t_1\nsim c_2'$ by Claim~\ref{claim:TipsCenters01}. But now the vertex set $C_1\cup C_2\cup \{t_1,t_2\}$ induces a copy of $F_1$ or $F_2$ (depending whether $c_1\sim c_2$ or not). Consequently $t_1\nsim c_2$.
If $t_1\sim c_2'$ then the vertex set $V_1\cup \{t_2,c_2'\}$ induces a copy of $F_1$. Therefore the only edge we can have is $c_1\sim c_2$, but now an induced $F_1$ arises on the vertex set $C_1\cup C_2\cup \{t_1,t_2\}$.
\end{proof}

\begin{claim2} \label{claim:TipsCenters0}
$e(\{t_j\},C_{3-j})=0$ for $j\in \{1,2\}$.
\end{claim2}

\begin{proof}
It suffices to consider the case $j=2$. Suppose for a contradiction that $e(\{t_2\},C_1)=1$.
By Claim~\ref{claim:TipsCenters01} we may assume $t_2\sim c_1$ and consequently $t_2\nsim c_1'$. Moreover Claim~\ref{claim:noTedge} implies that $t_1\nsim t_2$. Recall that by Assumption~\ref{ass1} we have $t_2\nsim t_1'$. Furthermore $e(\{t_1\},C_2)=0$, since otherwise, if $t_1\sim c_2'$, then an induced $4$-cycle arises on the vertex set $\{t_1,c_2',c_1,t_2\}$, and if
$t_1\sim c_2$, then either the vertex set $\{t_1,c_2,c_1,t_2\}$ induces a $4$-cycle (if $c_1\nsim c_2$) or the vertex set $C_1\cup C_2\cup \{t_1,t_2\}$ induces an $F_1$ (otherwise).

Let $K$ be the component of $G-C_1$ such that $V_2\subseteq V(K)$. By property $(\ref{ppt3})$ there exists a vertex in $V(K)$ that dominates $C_1$. Let $w\in V(K)$ be a vertex that dominates $C_1$ and is closest to $V_2$ in $K$. Clearly, $w\notin V_2$. First we will show that $w\nsim v$ for any $v\in V_2$. Suppose for a contradiction that $w\sim v$ for some $v\in V_2$.
Note that $w\notin T_1$ since there are no edges between the sets $T_1$ and $V_2$.
Furthermore, property $(\ref{ppt2})$ implies that there exists some $t\in T_2$ such that $w\nsim t$.
If $w\sim c_2'$, then $w\sim t_2$, or otherwise a $4$-cycle arises on the vertex set $\{w,c_1,t_2,c_2'\}$.
But now the set $C_1\cup \{w,c_2',t_2,t\}$ induces copy of $F_1$ in $G$. Therefore $w\nsim c_2'$.
If $w\sim t_2$, and in addition $c_1\nsim c_2$, then either an induced $H_1$ arises on the vertex set $V_2\cup C_1\cup \{w\}$ (if $w\nsim c_2$)
 or an induced $F_1$ arises on the vertex set $C_1\cup C_2\cup \{w,t_2\}$ (otherwise). Therefore $c_1\sim c_2$. But now the vertex set $C_1\cup C_2\cup \{t_2,w\}$ induces a copy of either $F_1$ (if $w\nsim c_2)$ or $F_2$ (otherwise). Therefore $w\nsim t_2$.
Also, $w\nsim c_2$, or otherwise either the vertex set $\{c_1,t_2,c_2,w\}$ induces a $4$-cycle (if $c_1\nsim c_2$) or the vertex set $C_1\cup C_2\cup \{w,t_2\}$ induces a copy of $F_1$ in $G$ (otherwise).
Furthermore $w\nsim t_2'$, or otherwise either the vertex set $T_2\cup \{c_1,c_2',w\}$ induces a $5$-cycle (if $c_1\nsim c_2$) or the vertex set $\{c_1,c_2,t_2',w\}$ induces a $4$-cycle (otherwise).

Let $P=(w=w_1,\dots,w_k)$ with $w_k\in V_2$ be a shortest $w,V_2$-path in $K$. Note that $k\geq 3$ and that the choice of $P$ implies that for all
$i\in \{1,\dots,k-2\}$ vertex $w_i$ is not adjacent to any vertex in $V_2$.
In order to avoid an induced cycle of length at least $4$ within $V(P)\cup V_2\cup \{c_1\}$, we infer that vertex $c_1$ must be adjacent to all the internal vertices of $P$ (that is $w_2,\dots,w_{k-1}$).

We claim that $w_{k-1}\sim t_2$. Suppose for a contradiction that $w_{k-1}\nsim t_2$.
Then if $w_{k-1}\sim c_2'$ an induced $4$-cycle arises on the vertex set $\{c_1,t_2,c_2',w_{k-1}\}$. If $w_{k-1}\sim c_2$, to avoid an induced $4$-cycle on the vertex set $\{w_{k-1},c_1,t_2,c_2\}$, we must have $c_1\sim c_2$. But now an induced $F_1$ arises on the vertex set $C_2\cup \{t_2,c_1,w_{k-1},w_{k-2}\}$.
If $w_{k-1}\sim t_2'$ either an induced $5$-cycle arises on the vertex set $T_2\cup \{c_1,c_2,w_{k-1}\}$ (if $c_1\nsim c_2$) or an induced $4$-cycle arises on the vertex set $\{t_2',w_{k-1},c_1,c_2\}$ (otherwise).
Therefore $w_{k-1}\sim t_2$. But now either an induced $H_1$ arises on the vertex set $V_2\cup \{w_{k-1},w_{k-2},c_1\}$ (if $c_1\nsim c_2$) or an induced $F_1$ arises on the vertex set $V_2\cup \{w_{k-1},c_1\}$.
\end{proof}

Let $H$ be the subgraph of $G$ induced by $V_1\cup V_2$.
Note that $H$ realizes the pattern in Fig.~\ref{Fig:Lemmas}\,$(b)$.
Let $K^{-1}_2$ be the component of $G-C_1$ containing $V_2$ and let $U^{-1}$ be the set of vertices in $K^{-1}_2$ that dominate $C_1$.
By property $(\ref{ppt3})$, set $U^{-1}$ is non-empty.
Let $u^{-1}$ be a vertex in $U^{-1}$ that is closest in $K^{-1}_2$ to $C_2$.
Graph $K^{-2}_1$ and vertex $u^{-2}$ are defined similarly.

By property $(\ref{ppt2})$ we may assume without loss of generality the following.

\begin{assumption}\label{ass3}
$t_1' \notin V(K^{-1}_2)$ and $t_2' \notin V(K^{-2}_1)$.
Furthermore, $u^{-i}\nsim t_1'$ and $u^{-i}\nsim t_2'$ for $i\in \{1,2\}$.
\end{assumption}

\begin{claim2}\label{claim:TwoPaths}
Vertices $u^{-1}$ and $u^{-2}$ are distinct and non-adjacent, and
at least one of the sets
\hbox{$N(u^{-1})\cap V_2$}, $N(u^{-2})\cap V_1$ is empty.
\end{claim2}

\begin{proof}
First we prove that $u^{-1}\nsim c_2$ or $u^{-1}\nsim c_2'$. Suppose for a contradiction that $e(\{u^{-1}\}, C_2) = 2$.
Then either an induced $F_1$ arises on the vertex set $C_1\cup C_2\cup \{u^{-1},t_2'\}$ (if $c_1\sim c_2$) or an induced $H_1$ arises on the vertex set $C_1\cup C_2\cup \{u^{-1},t_1',t_2'\}$ (otherwise).
Therefore, $u^{-1}\nsim c_2$ or $u^{-1}\nsim c_2'$, as claimed.

Since $u^{-2}$ dominates $C_2$ but $u^{-1}$ does not,
we infer that $u^{-1}\neq u^{-2}$.

Next we prove that $u^{-1}\nsim u^{-2}$.
Suppose for a contradiction that $u^{-1}\sim u^{-2}$. We claim that $u^{-1}\sim c_2$ or $u^{-2}\sim c_1$. Suppose to the contrary that $u^{-1}\nsim c_2$ and $u^{-2}\nsim c_1$.
Then $c_1\nsim c_2$, since otherwise an induced $4$-cycle arises on the vertex set $\{c_1,c_2,u^{-2},u^{-1}\}$.
Furthermore, $u^{-1}\sim c_2'$ or $u^{-2}\sim c_1'$, since otherwise an induced $H_2$ arises on the vertex set
$C_1\cup C_2 \cup \{t_1',u^{-1},u^{-2},t_2'\}$.
If only one of the edges $u^{-1}c_2'$ and $u^{-2}c_1'$ is present, say
$u^{-1}c_2'$, then an induced $H_1$ arises on the vertex set $C_1\cup C_2\cup\{t_1',u^{-1},u^{-2}\}$.
If both edges $u^{-1}c_2'$ and $u^{-2}c_1'$ are present, then an induced $F_1$ arises on the vertex set $C_1\cup C_2\cup \{u^{-1},u^{-2}\}$.
Both cases lead to a contradiction, thus $u^{-1}\sim c_2$ or $u^{-2}\sim c_1$, as claimed.
We may assume without loss of generality that $u^{-1}\sim c_2$.
Now we must have $c_1\nsim c_2$ and $c_1\nsim u^{-2}$, since otherwise an induced $F_1$ or $F_2$ arises on the vertex set $C_1\cup C_2\cup \{u^{-1},u^{-2}\}$, depending on whether one or both edges are present. But now an induced $H_1$ arises on the vertex set $C_1\cup C_2\cup \{t_1',u^{-1},u^{-2}\}$, a contradiction.

To complete the proof, we consider the two cases depending on whether $c_1$ is adjacent to $c_2$ or not.
Suppose first that $c_1\sim c_2$. If $u^{-1}\sim c_2'$,
then $u^{-1}\nsim c_2$ and $G$ contains an induced $4$-cycle on the vertex set $\{u^{-1},c_2',c_2,c_1\}$. Therefore, $u^{-1}\nsim c_2'$, and similarly $u^{-2}\nsim c_1'$. If $u^{-1}\sim c_2$ and $u^{-2}\sim c_1$, then an induced $F_1$ arises on the vertex set $C_1\cup C_2\cup \{u^{-1},u^{-2}\}$. It follows that $H$ contains at most one of the edges $u^{-1}c_2$ and $u^{-2}c_1$. It follows that at least one of the sets
\hbox{$N(u^{-1})\cap V_2$}, $N(u^{-2})\cap V_1$ is empty, as claimed.

Finally, suppose that $c_1\nsim c_2$.
Suppose for a contradiction that the sets
\hbox{$N(u^{-1})\cap V_2$} and \hbox{$N(u^{-2})\cap V_1$} are both non-empty.
Since $u^{-1}\nsim c_2$ or $u^{-1}\nsim c_2'$, and, similarly,
$u^{-2}\nsim c_1$ or $u^{-2}\nsim c_1'$, we infer that
$H$ contains exactly one of the edges $u^{-1}c_2$, $u^{-1}c_2'$
and exactly one of the edges $u^{-2}c_1$, $u^{-2}c_1'$.
But now, an induced $F_1$ arises on the vertex set $C_1\cup C_2\cup \{u^{-1},u^{-2}\}$, a contradiction.
\end{proof}

\begin{claim2}\label{claim:Centers0}
$c_1\nsim c_2$.
\end{claim2}

\begin{proof}
Suppose for a contradiction that $c_1\sim c_2$ and consider $K^{-1}_2$, $u^{-1}$, $K^{-2}_1$, and $u^{-2}$. Clearly, $u^{-1}\not\in C_1\cup C_2\cup \{t_1'\}$. Moreover, since $t_2'\not\in V(K^{-1}_2)$, we have $u^{-1}\neq t_2'$ and $u^{-1}\nsim t_2'$.
Also, by symmetry, $u^{-2}\not\in C_1\cup C_2\cup \{t_2'\}$. Since $t_1'\not\in V(K^{-2}_1)$, we have $u^{-2}\neq t_1'$ and $u^{-2}\nsim t_1'$.
Furthermore, by Claim~\ref{claim:TwoPaths} we may assume without loss of generality that $N(u^{-1})\cap V_2=\emptyset$.

Let $P^{-1}=(u^{-1}=u_1,u_2,\ldots,u_k)$, with $u_k\in V_2'=C_2\cup\{t_2'\}$ be a shortest $u^{-1},V_2'$-path in $K^{-1}_2$, and similarly,
let $P^{-2}=(u^{-2}=v_1,v_2,\ldots,v_\ell)$, with $v\in V_1'=C_1\cup\{u^{-1},t_1'\}$ be a shortest $u^{-2},V_1'$-path in $V(K^{-2}_1)$.
The fact that $N(u^{-1})\cap V_2=\emptyset$ implies that $k\ge 3$ and since $u^{-2}\not\in C_2\cup \{t_2'\}$, we have $\ell \ge 2$.

Since $u^{-1}\nsim c_2$, we infer that vertex $c_1$ dominates $P^{-1}$ or otherwise $G$ would contain an induced cycle of length at least $4$.

\begin{sloppypar}
Suppose that $u_{k-1}\sim c_2'$.
To avoid an induced $4$-cycle on the vertex set $\{c_2,c_2',c_1, u_{k-2}\}$, we infer that $u_{k-2}\sim c_2$.
We must have $k=3$ since if $k \ge 4$, then the vertex set $C_2\cup \{u_{k-1}, c_1, u_{k-2}, u_{k-3}\}$ induces a copy of $F_1$.
But now, an induced copy of $F_1$ arises either on the vertex set $C_1\cup C_2\cup \{u_1, u_2\}$ (if $c_1'\nsim u_2$),
or the vertex set $C_1\cup C_2\cup \{u_2, t_1'\}$ (otherwise), a contradiction.
Therefore, $u_{k-1}\nsim c_2'$.
\end{sloppypar}

Suppose that $u_{k-1}\sim t_2'$. In this case, the vertex set $V_2'\cup \{u_{k-1}, u_{k-2}, c_1\}$ induces a copy of either $F_1$
(if $u_{k-1}\nsim c_2$) or of $F_2$ (otherwise), a contradiction.
Therefore, $u_{k-1}\nsim t_2'$. Consequently, $u_k = c_2$.

Suppose that $u^{-2}\sim c_1$.
If in addition $u^{-2}\nsim u_{k-1}$, then also $u^{-2}\nsim u_{k-2}$ (since otherwise the vertex set $\{u_{k-2}, u_{k-1}, c_2, u^{-2}\}$ would induce a $4$-cycle),
but now, the vertex set $\{u_{k-2},u_{k-1},c_1,c_2,c_2',u^{-2}\}$ induces a copy of $F_1$, a contradiction.
Therefore, $u^{-2}\sim u_{k-1}$.
Let $u_i$ be the neighbor of $u^{-2}$ on $P^{-1}$ minimizing $i$. Since $u_1\nsim u^{-2}$, we have $i>2$.
Moreover, since $u^{-2}\sim u_{k-1}$, we have $i<k$. But now, the vertex set $C_2\cup \{u_{i-1},c_1,u_i,u^{-2}\}$
induces either a copy of $F_1$ (if $u_i\nsim c_2$) or of $F_2$ (otherwise), a contradiction.
Therefore, $u^{-2}\nsim c_1$.

Since $u^{-2}\nsim c_1$, we can now apply symmetric arguments as for $P^{-1}$ to deduce that $v_\ell=c_1$ and that $c_2$ dominates $P^{-2}$.

\begin{sloppypar}
Suppose first that $V(P^{-1}) \cap V(P^{-2}) = \emptyset$.
To avoid an induced $4$-cycle on the vertex set $\{u_{k-2},c_2,c_1,v_{\ell-2}\}$, we infer that $u_{k-2}\nsim v_{\ell-2}$.
Suppose that $u_{k-1}\nsim v_{\ell-1}$. Then also $u_{k-1}\nsim v_{\ell-2}$ (since otherwise we would have an induced $4$-cycle on the vertex set
$\{u_{k-1},v_{\ell-2},v_{\ell-1},c_1\}$) and by a symmetric argument also $u_{k-2}\nsim v_{\ell-1}$.
But now, we have an induced $F_1$ on the vertex set $\{u_{k-2},c_1,u_{k-1},c_2,v_{\ell-1},v_{\ell-2}\}$.
Thus, $u_{k-1}\sim v_{\ell-1}$.
Moreover, we have either $u_{k-2}\sim v_{\ell-1}$ or $v_{\ell-2}\sim u_{k-1}$, since otherwise an induced $F_2$ arises on the vertex set $\{c_1,v_{\ell-1},v_{\ell-2},c_2,u_{k-1},u_{k-2}\}$. Without loss of generality, assume that $u_{k-2}\sim v_{\ell-1}$.
However, an induced copy of $F_1$ arises on vertex set
$\{u_{k-2}, c_1, v_{\ell-1}, v_{\ell-2}, c_2, v_{\ell-3}\}$ (where if $\ell = 3$ we define $v_0 = c_2'$).
This contradiction shows that $V(P^{-1}) \cap V(P^{-2}) \neq \emptyset$.
\end{sloppypar}

Since $v_\ell=c_1$ and due to the minimality of $P^{-2}$, we have $N(c_1)\cap V(P^{-2}) = \{v_{\ell-1}\}$.
On the other hand, since $c_1$ dominates $P^{-1}$, we have $N(c_1)\cap V(P^{-1}) = V(P^{-1})$.
Therefore $\emptyset\neq V(P^{-2})\cap V(P^{-1}) = V(P^{-2})\cap \big(N(c_1)\cap V(P^{-1})\big) = \big(N(c_1)\cap V(P^{-2})\big) \cap V(P^{-1}) = \{v_{\ell-1}\}\cap V(P^{-1})\subseteq \{v_{\ell-1}\}\,,$
which yields $V(P^{-1})\cap V(P^{-2}) = \{v_{\ell-1}\}$. A symmetric argument implies that $V(P^{-1})\cap V(P^{-2}) = \{u_{k-1}\}$; in particular,
$v_{\ell-1} = u_{k-1}$.
To avoid an induced $4$-cycle on the vertex set $\{u_{k-2},c_1,c_2, v_{\ell-2}\}$, we infer that $u_{k-2}\nsim v_{\ell-2}$.
But now, an induced copy of $F_1$ arises on vertex set $\{u_{k-3} ,u_{k-2}, c_1, u_{k-1}, c_2, v_{\ell-2}\}$
(where if $k = 3$ we define $u_0 = c_1'$).
This contradiction completes the proof of Claim~\ref{claim:Centers0}.
\end{proof}

By Claim~\ref{claim:intersectionEmpty}, we have $V_1\cap V_2= \emptyset$.
By Assumptions~\ref{ass1} and~\ref{ass2} and Claims~\ref{claim:noTedge}, \ref{claim:TipsCenters0}, and~\ref{claim:Centers0} we have $e(V_1,V_2)=0$. However, since $G$ is connected, there exists a path connecting the two diamonds $D_1$ and $D_2$. In particular, we will again consider $K^{-1}_2$, $u^{-1}$, $K^{-2}_1$, and $u^{-2}$, and analyze the possible interrelations between two particular paths to produce a forbidden induced subgraph.

Recall that by Assumption~\ref{ass3} we have $t_1'\notin V(K^{-1}_2)$ and $t_2'\notin V(K^{-2}_1)$. Furthermore, since neither of $c_2$, $c_2'$, $t_2$, $t_2'$ dominates $C_1$, we have $u^{-1}\not\in V_2$, and
Claim~\ref{claim:TwoPaths} implies that $u^{-1}\not\in V_1$. Similarly, $u^{-2}\not\in V_1\cup V_2$. Recall that Claim~\ref{claim:TwoPaths} also implies that $u^{-1}\neq u^{-2}$, $u^{-1}\nsim u^{-2}$, and that we may assume without loss of generality that $N(u^{-1})\cap V_2=\emptyset$.

Let $P^{-1}=(u^{-1}=u_1,u_2,\ldots,u_k)$, with $u_k\in C_2$, be a shortest $u^{-1},C_2$-path in $K^{-1}_2$, and let $P^{-2}=(u^{-2}=v_1,v_2,\ldots,v_\ell)$, with $v_\ell\in C_1$, be a shortest $u^{-2},C_1$-path in $K^{-2}_1$. We may assume that $u_k = c_2$ and $v_\ell = c_1$. The fact that $N(u^{-1})\cap V_2=\emptyset$ implies that $k\ge 3$ and since $u^{-2}\not\in C_2$, we have $\ell \ge 2$.

\begin{claim2}\label{claim:ell}
$\ell \ge 3$.
\end{claim2}

\begin{proof}
Suppose that $\ell = 2$. Then, $u^{-2}\sim c_1$ and hence $u^{-2}\nsim c_1'$.
To avoid a long induced cycle, we infer that
$u_{k-1}\sim u^{-2}$ and
$u_{k-1}\sim c_1$. In particular, $u_{k-1}\neq t_2'$.
Moreover, $u_{k-1}\sim t_2'$ since otherwise
the vertex set $C_2\cup \{t_2',u^{-2},u_{k-1},c_1\}$ induces a copy of either $F_1$ (if $u_{k-1}\nsim c_2'$)
or $F_2$ (otherwise).
But now $u^{-2}$ and $t_2'$ are in the same component of $G-C_2$, contradicting the fact that $u^{-2}\in V(K^{-2}_1)$ and $t_2'\notin V(K^{-2}_1)$. This contradiction implies that $\ell \ge 3$.
\end{proof}

\begin{claim2}\label{claim:uv}
$u_{k-1}\neq v_1$ and $v_{\ell-1}\neq u_1$.
\end{claim2}

\begin{proof}
Suppose for a contradiction that $u_{k-1} = v_1$. Recall that $v_1 = u^{-2}$.
By the minimality of $P^{-1}$, we have $c_2\nsim u_j$ and $c_2'\nsim u_j$ for every $j\in \{1,\ldots, k-2\}$.
Furthermore, since $u_1 = u^{-1}\nsim u^{-2} = u_{k-1}$, we have $k\ge 4$.
Since $u^{-2}$ and $t_2'$ are in different components of $G-C_2$, we infer that $t_2'\nsim u_j$ for all $j \in \{1,\ldots, k-2\}$.
If $c_1\sim u_3$, then we obtain an induced copy of $H_i$ for some $i\ge 1$
on the vertex set $\{t_2,c_2,c_2',v_1, u_{k-2}, \ldots, u_j, u_{j-1}, u_{j-2},c_1\}$, where $j\in \{3,\ldots, k\}$ is the maximum index such that
$c_1\sim u_j$. (Note that $j\le k-2$.) Therefore, $c_1\nsim u_3$, and to avoid a long induced cycle, also
$c_1\nsim u_j$ for $j\ge 4$.
A similar argument shows that $c_1'\nsim u_j$ for $j\ge 3$.
If $c_1\nsim u_2$ and $c_1'\nsim u_2$, then we obtain an induced copy of some $H_i$ on the vertex set
$V(P^{-1})\cup C_1\cup C_2\cup \{t_1',t_2'\}$.
If $c_1\sim u_2$ and $c_1'\nsim u_2$ (or vice-versa), then an induced copy of some $H_i$ arises
on the vertex set $V(P^{-1})\cup C_1\cup C_2\cup \{t_2'\}$, and if
$c_1\sim u_2$ and $c_1'\sim u_2$, then
an induced copy of some $H_i$ arises
on the vertex set $(V(P^{-1})\setminus \{u_1\})\cup C_1\cup C_2\cup \{t_1',t_2'\}$.
This contradiction shows that $u_{k-1}\neq v_1$.

Using Claim~\ref{claim:ell} we see that the paths $P^{-1}$ and $P^{-2}$ appear symmetrically. Thus, similar arguments as above imply that $v_{\ell-1}\neq u_1$.
\end{proof}

Property~$(\ref{ppt4})$ implies the following.

\begin{claim2}\label{claim:non-disjoint}
$V(P^{-1}) \cap V(P^{-2}) \neq \emptyset$.
\end{claim2}

We are now ready to complete the proof of the Diamond Lemma.
Let $r\in \{1,\ldots, k\}$ be the minimum index such that $u_r\in V(P^{-2})$. Note that $r<k$, since $u_k\in C_2$ and $C_2\cap V(P^{-2}) = \emptyset$.
Let $s\in \{1,\ldots, \ell\}$ be the index such that $u_r = v_s$.
If $r = 1$, then $u_1 = v_{\ell-1}$, contradicting Claim~\ref{claim:uv}.
Therefore, $r\ge 2$. Similarly, if $s = 1$, then $v_1 = u_{k-1}$, again contradicting Claim~\ref{claim:uv}. Therefore, $s\ge 2$.

Consider the path $Q = (u_1,\ldots, u_r = v_s, v_{s-1}, \ldots, v_1)$.
Let $D$ and $D'$ be the subgraphs of $G$ induced by $\{t_1',c_1,c_1',u_1\}$ and $\{t_2',c_2,c_2',v_1\}$, respectively. Notice that $D$ and $D'$ are diamonds. We will refer to tips $u_1$ and $v_1$ as the \emph{roots} of $D$ and $D'$, respectively. Then, $Q$ is a path connecting the two roots.
Moreover, by Assumption~\ref{ass3} we have $t_1'\notin V(K^{-1}_2)$
and $V(Q)\subseteq V(K^{-1}_2)$, we infer that $t_1'$ has no neighbors on $Q$. Similarly, $t_2'$ has no neighbors on $Q$.

We may also assume that $Q$ is an induced path; otherwise, we replace $Q$ with a shortest $t_1',t_2'$-path in $G[V(Q)]$.
To complete the proof, we will show that $G$ is not $\{F_1,F_2,H_1,H_2,\dots\}$-free.
We say that an induced subgraph $H$ of $G$ is a \emph{weakly induced} $H_n$ if $H$ has a spanning subgraph $H_n$ with $n\ge 1$ consisting of two diamonds and a path connecting them such that, assuming notation from Fig.~\ref{fig:H}, the following holds:
\begin{enumerate}[(i)]
\item each of the two diamonds is induced in $G$,
\item there are no edges in $G$ connecting a vertex from one diamond with a vertex from another diamond, except perhaps edges incident with their roots (if $n = 1$) or the unique edge on the path connecting the two roots (if $n = 2$),
\item the path connecting the two diamonds is induced in $G$, and
\item vertices $x_1$ and $z_1$ do not have any neighbors on the path.
\end{enumerate}

\begin{figure}[!ht]
  \begin{center}
\includegraphics[width=0.55\linewidth]{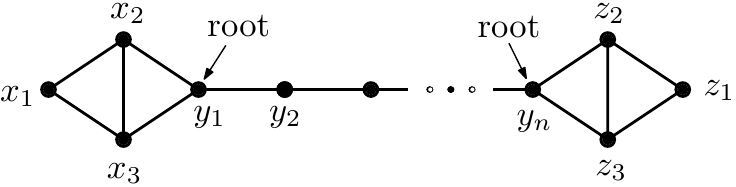}
  \end{center}
\caption{A weakly induced $H_n$}
\label{fig:H}
\end{figure}

Note, in particular, that every weakly induced $H_n$ for $n\in \{1,2\}$ is isomorphic to $H_n$.

The above considerations show that the subgraph of $G$ induced by $V(D)\cup V(D')\cup V(Q)$ contains a weakly induced $H_n$. Choose one such induced subgraph, say $H$, with minimum value of $n$, and let $F$ be the corresponding spanning subgraph of $H$ isomorphic to $H_n$.
To complete the proof, we will now show that either $H$ equals $F$ or $G$ contains an induced $F_1$ or $F_2$. Suppose that this is not the case. The only possible edges that can be present in $H$ but not in $F$ are those connecting one of the vertices $x_2, x_3, z_2, z_3$ with one of the vertices in the set $\{y_2,\ldots, y_{n-1}\}$.

Let us first show that for each $i\in \{2,\ldots, n-1\}$,
at most one of $x_2$ and $x_3$ is adjacent to $y_i$.
Suppose that $x_2\sim y_i$ and $x_3\sim y_i$ for some $i\in \{2,\ldots, n-1\}$. Then $y_i\sim z_2$ or $y_i\sim z_3$, since otherwise the subgraph of $G$ induced by $\{x_1,x_2,x_3,y_i,\ldots, y_n,z_1,z_2,z_3\}$ would be a weakly induced $H_{n-i+1}$, contradicting the minimality of $H$.
If $y_i\sim z_2$ and $y_i\sim z_3$, then the vertex set
$\{x_1,x_2,x_3,y_i,z_1,z_2,z_3\}$ induces an $H_1$ in $G$.
We may thus assume that $y_i$ is adjacent only to one of $z_2,z_3$, say to $z_3$. If $i = n-1$, then the vertex set $\{x_1,x_2,x_3,y_{n-1},y_n,z_2,z_3\}$ induces an $H_1$ in $G$.
If $i\le n-2$, then the fact that $G$ is chordal implies that
$z_3\sim y_{j}$ for all $j\in \{i,\ldots, n\}$, and the vertex set
$\{x_1,x_2,x_3,y_i,y_{i+1},y_{i+2},z_2\}$ induces an $H_1$ in $G$.
This contradiction shows that for each $i\in \{2,\ldots, n-1\}$,
at most one of $x_2$ and $x_3$ is adjacent to $y_i$.

Next, we argue that at least one of $x_2$ and $x_3$ is not adjacent to any
vertex $y_i$ with $i\in \{2,\ldots, n-1\}$.
Indeed, if $x_2\sim y_r$ and $x_3\sim y_s$, with $2\le r\le s \le n-1$ (say),
then $r<s$ and the fact that $G$ is chordal implies that
$x_3\sim y_{j}$ for all $j\in \{2,\ldots, s\}$, contradicting the fact that
at most one of $x_2$ and $x_3$ is adjacent to $y_r$. Therefore, we may assume without loss of generality that $x_2$ has no neighbors in the set $\{y_2,\dots,y_{n-1}\}$. Similarly, we may assume that $z_2$ has no neighbors in the set $\{y_2,\dots,y_{n-1}\}$.

Let $r\in \{1,\ldots, n-1\}$ be the maximum index such that $x_3\sim y_r$.
Similarly, let $s\in \{2,\ldots, n\}$ be the minimum index such that $z_3\sim  y_s$. If $r = 1$ and $s = n$, then $H = F$ and we are done.
Thus, we may assume without loss of generality that $r\ge 2$.
In particular, this implies that $x_3\sim y_2$.
If $y_2\nsim z_3$, then the subgraph of $G$ induced by $\{x_2,x_3,y_1,\ldots, y_n,z_1,z_2,z_3\}$ is a weakly induced $H_{n-1}$, contradicting the minimality of $H$. Therefore, $y_2\sim z_3$, or, equivalently, $s = 2$.
A similar argument shows that $r = n-1$.
Now, if $n = 3$, then the vertex set
$\{x_2,x_3,y_1,y_2,y_3,z_2,z_3\}$ induces an $H_1$ in $G$, and if $n\ge 4$,
then the vertex set
$\{x_2,x_3,y_1,y_2,y_3,z_3\}$ induces an $F_1$ in $G$.
This contradiction completes the proof of the Diamond Lemma.
\qed
\end{cproof}

\appendix

\begin{sloppypar}
\section*{Appendix: A non-connected-domishold split graph
whose cutset \hbox{hypergraph} is $2$-asummable}
\end{sloppypar}

Based on an example due to Gabelman~\cite{Gabelman}, Crama and Hammer proposed in the proof of~\cite[Theorem 9.15]{MR2742439} an example of a
$9$-variable $2$-asummable positive Boolean function $f$ that is not threshold. From this function we can derive a split graph $G=(V,E)$ on $71$ vertices, as follows. Let $V = K\cup I$ where $K=\{v_1,\dots,v_9\}$ is a clique and $I=V(G)-K$ is an independent set. To define the edges between $K$ and $I$, we first associate a non-negative integer weight to each vertex, as follows:
$w(v_1)=14$, $w(v_2)=18$, $w(v_3)=24$, $w(v_4)=26$, $w(v_5)=27$, $w(v_6)=30$, $w(v_7)=31$, $w(v_8)=36$, $w(v_9)=37$, and $w(v)=0$ for all $v\in I$. Let $\mathcal{S}$ be the set of all subsets $S$ of $K$ such that $w(S)\ge 82$ and let $S_1=\{v_1,v_6,v_9\}$, $S_2=\{v_2,v_5,v_8\}$, and $S_3=\{v_3,v_4,v_7\}$. (Note that $w(S_i)=81$ for all $i\in [3]$.)
Let $\mathcal{H}$ be the hypergraph with vertex set $K$ and hyperedge set
given by the inclusion-wise minimal sets in $\mathcal{S}\cup \{S_1,S_2,S_3\}$.
It can be verified that $\mathcal{H}$ has precisely $62$ hyperedges (including $S_1$, $S_2$, and $S_3$).\footnote{The following is the list of sets (omitting commas and brackets) of indices of the elements of the $62$ inclusion-wise minimal hyperedges of $\mathcal{H}$: 169, 179, 189, 258, 259, 268, 269, 278, 279, 289, 347, 348, 349, 357, 358, 359, 367, 368, 369, 378, 379, 389, 456, 457, 458, 459, 467, 468, 469, 478, 479, 489, 567, 568, 569, 578, 579, 589, 678, 679, 689, 789, 1234, 1235, 1236, 1237, 1238, 1239, 1245, 1246, 1247, 1248, 1249, 1256, 1257, 1267, 1345, 1346, 1356, 2345, 2346, 2356.}
The edges of $G$ between vertices of $I$ and $K$ are defined so that set of the neighborhoods
of the $62$ vertices of $I$ is exactly the set of hyperedges of $\mathcal{H}$.

To show that $G$ is not CD, it suffices, by Proposition~\ref{prop:c-dom-graphs}, to show that the cutset hypergraph
is not threshold. In the proof of Theorem 9.15 in~\cite{MR2742439} it is shown that the function $f$ is not threshold, by showing that $f$ is $3$-summable. This corresponds to the fact that the cutset hypergraph of $G$ is $3$-summable, as can be observed by noticing that condition~\eqref{eq:k-summable hypergraph} is satisfied for $k = r = 3$ and for
the sets $A_i = S_i$ for all $i\in [3]$ and $B_1 = \{v_1,v_7,v_8\}$, $B_2 = \{v_2,v_4,v_9\}$, and $B_3 = \{v_3,v_5,v_6\}$.
On the other hand, the fact that $f$ is $2$-asummable implies that the cutset hypergraph of $G$ is $2$-asummable.

\medskip

\section*{Acknowledgments}

\begin{sloppypar}
The authors are grateful to an anonymous reviewer for insightful suggestions, to Endre Boros, Yves Crama, Vladimir Gurvich, Pinar Heggernes, Haiko M\"uller, and Vito Vitrih for helpful discussions, and to
Dieter Kratsch and Haiko M\"uller for providing them with a copy of paper~\cite{MR1284728}. The work for this paper was partly done in the framework of a bilateral project between Argentina and Slovenia, financed by the Slovenian Research Agency (BI-AR/$15$--$17$--$009$) and MINCYT-MHEST (SLO/14/09). The authors acknowledge financial support from the Slovenian Research Agency (research core funding No.~I$0$-$0035$ and P$1$-$0285$, and projects N$1$-$0032$, J$1$-$6720$, and J$1$-$7051$).
\end{sloppypar}

\def\ocirc#1{\ifmmode\setbox0=\hbox{$#1$}\dimen0=\ht0 \advance\dimen0
  by1pt\rlap{\hbox to\wd0{\hss\raise\dimen0
  \hbox{\hskip.2em$\scriptscriptstyle\circ$}\hss}}#1\else {\accent"17 #1}\fi}

\end{document}